\newtheorem{thm}{Theorem}[section] 
\newtheorem{lemma}[thm]{Lemma}
\newtheorem{prop}[thm]{Proposition}
\newtheorem{cor}[thm]{Corollary}
\theoremstyle{definition}
\newtheorem{remark}[thm]{Remark}
\newtheorem{defn}[thm]{Definition}
\DeclareMathOperator{\Gr}{Gr}
\DeclareMathOperator{\Fl}{Fl}
\DeclareMathOperator{\OG}{OG}
\DeclareMathOperator{\OF}{OF}
\DeclareMathOperator{\SL}{SL}
\DeclareMathOperator{\Sp}{Sp}
\DeclareMathOperator{\QK}{QK}
\renewcommand{\P}{{\mathbb P}}
\newcommand{\C}{{\mathbb C}}
\newcommand{\Z}{{\mathbb Z}}
\newcommand{\cF}{{\mathcal F}}
\newcommand{\cO}{{\mathcal O}}
\newcommand{\gw}[2]{\langle #1 \rangle^{\mbox{}}_{#2}}
\newcommand{\euler}[1]{\chi_{_{#1}}}
\newcommand{\id}{\text{id}}
\newcommand{\ev}{\operatorname{ev}}
\newcommand{\wt}{\widetilde}
\newcommand{\wb}{\overline}
\newcommand{\ignore}[1]{}
\newcommand{\Mb}{\wb{\mathcal M}}
\newcommand{\ek}{\varepsilon_k}
\newcommand{\cG}{\mathcal{G}}
\newcommand{\calH}{\mathcal{H}}
\DeclareMathOperator{\codim}{codim}
\newcommand{\EV}{\mathrm{EV}}
\begin{document}
\title[KGW invariants of lines on $G/P$]{K-theoretic Gromov-Witten invariants of lines in homogeneous spaces}

\author{Changzheng Li}
\address{Changzheng~Li, Kavli Institute for the Physics and Mathematics of the Universe (Kavli IPMU),
Todai Institutes for Advanced Study, The University of Tokyo,
5-1-5 Kashiwa-no-Ha,Kashiwa City, Chiba 277-8583, Japan}
\email{changzheng.li@ipmu.jp}

\author{Leonardo~C.~Mihalcea}
\address{Leonardo~C.~Mihalcea, 460 McBryde Hall, Department of Mathematics, Virginia Tech,
  Blacksburg, VA 24061, USA}
\email{lmihalce@math.vt.edu}

 \subjclass[2010]{Primary 14N35; Secondary 19E08, 14M15, 14N15}

\thanks{The first author was supported in part by a Grant-in-Aid for Young Scientists ((B)25870175) of JSPS}

\thanks{The second author was supported in part by NSA Young
  Investigator Award 12-0871-10.}

\maketitle

\begin{abstract}Let $X=G/P$ be a homogeneous space and $\ek$ the homology class of a simple coroot. For almost all $X$, the variety $Z_k(X)$ of degree $\ek$ pointed lines in $X$ is known to be homogeneous. For these $X$ we show that the $3$-point, genus $0$, equivariant K-theoretic Gromov-Witten invariants of lines of degree $\ek$ equal quantities obtained in the (ordinary) equivariant K-theory of $Z_k(X)$. We apply this to compute the Schubert structure constants $N_{u,v}^{w, \ek}$ in the equivariant quantum K-theory ring of $X$. Using geometry of spaces of lines through Schubert or Richardson varieties we prove vanishing and positivity properties of $N_{u,v}^{w,\ek}$.~This generalizes many results from K-theory and quantum cohomology of $X$, and gives new identities among the structure constants in equivariant K-theory of $X$. \end{abstract}

%
%

\section{Introduction} Let $G$ be a simple, simply connected, complex Lie group, and $T \subset B  \subset G$ be a maximal torus $T$ included in a Borel subgroup $B$. Fix $P \supset B$ a parabolic subgroup of $G$ containing the Borel, and let $X=G/P$ be the associated flag manifold. We also fix a homology class $\ek \in H_2(X; \Z)$ corresponding to a simple coroot $\alpha_k^\vee$ in the dual root system associated to $G$. Strickland \cite{strickland:lines} and independently Landsberg and Manivel \cite{landsberg.manivel:projective} classified the Fano variety $L_k(X)$ of lines of class $\ek$ which are included in $X$. It turned out that for most homogeneous spaces $X$ -   notably for all simply laced groups $G$, or when $P=B$, or when $X$ is cominuscule -
the variety $L_k(X)$ is again homogeneous. Furthermore, in any of these   cases, the variety of {\em pointed lines} \[ Z_k(X) = \{ (x \in \ell): \ell \subset X \textrm{ and }\ell \textrm{ has class } \ek \} \/, \] is again homogeneous. For example, $Z_k(G/B) = G/B$; if $X=\Gr(p,m)$ is the Grassmannian of $p$-dimensional subspaces in $\C^m$, then $Z_k(\Gr(p,m)) = \Fl(p-1,p,p+1;m)$ - the three-step flag manifold parametrizing triples of subspaces $(W_{p-1} \subset W_p \subset W_{p+1})$ in $\C^m$ with $\dim W_i =i$. See \S \ref{ss:lines} for details.

The first objective of this paper is to compute the $3$-point, genus $0$, $T$-equivariant K-theoretic Gromov-Witten (KGW) invariants $\gw{[\cF_1],[\cF_2],[\cF_3]}{\ek}$ on such $X$, where $[\cF_i] \in K_T(X)$ is the class determined by the equivariant coherent sheaf $\cF_i$ on $X$. Givental \cite{givental:onwdvv} defined these invariants (for more general $d \in H_2(X; \Z)$) as the  sheaf Euler
characteristic \[ \gw{[\cF_1],[\cF_2],[\cF_3]}{\ek} := \euler{\Mb_{0,3}(X,\ek)} (\ev_1^*[\cF_1] \cdot \ev_2^*[\cF_2] \cdot \ev_3^*[\cF_3]) \] over the moduli space of stable maps $\Mb_{0,3}(X,\ek)$, where $\ev_i$ are the evaluation maps; see \S \ref{ss:KGW} below.~If $\cF_i$ is the structure sheaf of a variety $\Omega_i \subset X$ and $\sum \codim \Omega_i = \dim \Mb_{0,3}(X, \ek)$ then one recovers the cohomological GW invariants $\gw{[\Omega_1],[\Omega_2], [\Omega_3]}{\ek}$; see \cite{fulton.pandharipande}. The KGW invariants are the building blocks of $\QK_T(X)$ - the equivariant quantum K-theory algebra of $X$. This is a deformation of $K_T(X)$, defined by Givental and Lee \cite{givental:onwdvv,lee:qk}. A study of this ring and its structure constants for cominuscule Grassmanians was started in \cite{buch.m:qk,buch.chaput.ea:finiteness}, but very little is known beyond this situation. Our main objective is to study the structure constants of $\QK_T(X)$ for degree $\ek$, where $X$ is in the (large) class of homogeneous spaces described previously. These structure constants are defined with respect to the Schubert basis $\{ \cO^u \}$ where $\cO^u \in K_T(X)$ is the class of the structure sheaf of the Schubert variety $Y(u) = \overline{B^-uP/P }$, with $B^-$ the opposite Borel subgroup; $u$ is the minimal length representative in its coset in $W/W_P$ - the Weyl group of $G$ modulo that of $P$ (cf. \S \ref{ss:schubert}). There is an identity in $\QK_T(X)$ (cf. \S \ref{s:QK} below): \[ \cO^u \circ \cO^v = \sum_{z} c_{u,v}^z \cO^z +  \sum_{w, k} N_{u,v}^{w, \ek} q_k \cO^w + \textrm{ terms with higher powers of } q \/. \] Here $c_{u,v}^z \in \Lambda:=K_T(pt)$ (the representation ring of $T$) are the structure constants of $K_T(X)$ (since $\QK_T(X)$ is a deformation of it), $q_k$ are the quantum parameters, and $N_{u,v}^{w, \ek} \in \Lambda$; cf.~\S \ref{s:QK} below. An additional difficulty in computing $N_{u,v}^{w, \ek}$ is that, unlike in the ordinary quantum cohomology, they are {\em not} single KGW invariants, but an alternating sum of these.

\subsection{Statement of results} We state next a more precise version of our results. Because the formulas are simpler, in this introduction we restrict to the case when $Z_k(X) = X$. This holds if the nodes in the Dynkin diagram of $G$ which determine the parabolic group $P$ are not adjacent to node $k$. For example, $X=G/B$ satisfies this condition, but $X=\Gr(p,m)$ does not. We call these parabolics {\em $k$-free} - cf. \S \ref{ss:lines} below. Recall that $K_T(X)$ is a $\Lambda$-module with a $\Lambda$-basis consisting of the Schubert classes $\cO^u$. Another basis is given by the opposite Schubert classes $\cO_u=[\cO_{X(u)}]$, associated to Schubert varieties $X(u) = \overline{BuP/P}$. Consider the K-theoretic divided difference operator $\partial_k: K_T(X) \to K_T(X)$ associated to the root $\alpha_k$. This is a $\Lambda$-module endomorphism which satisfies: \[ \partial_k(\cO^u) = \cO^{u_k}; \quad \partial_k(\cO_u) = \cO_{u^k} \/.\] See equation (\ref{E:hecke}) below for formulas of $u_k,u^k$ and Lemma \ref{lemma:div} for proofs.

\begin{thm}\label{T:mainintro} Let $P$ be a $k$-free parabolic group, $u,v,w$ minimal length representatives in $W/W_P$ and $[\cF],[\cG],[\calH] \in K_T(G/P)$. Then

(a) The equivariant KGW invariant $\gw{[\cF],[\cG],[\calH]}{\ek}$ equals \[\gw{[\cF],[\cG],[\calH]}{\ek} = \euler{G/P}(\partial_k([\cF]) \cdot \partial_k([\cG]) \cdot [\calH]) \/, \] where $\cdot$ denotes the multiplication in $K_T(G/P)$. In particular, \[ \gw{\cO^u,\cO^v, [\cF]}{\ek}= \euler{G/P} (\cO^{u_k} \cdot \cO^{v_k} \cdot [\cF]) \/.\] (These are the relevant invariants needed to define the quantum K multiplication.)

(b) The structure constant $N_{u,v}^{w, \ek}$ in $\QK_T(G/P)$ equals the coefficient of $\cO^w$ in the expression \[\partial_k(\cO^u) \cdot \partial_k(\cO^v) - \partial_k(\cO^u \cdot \cO^v) \in K_T(G/P) \/. \] Equivalently, if $s_k$ is the reflection associated to the root $\alpha_k$ and $\delta$ is the Kronecker symbol,
\begin{equation}\label{E:QKstr} N_{u,v}^{w, \ek} = c_{u_k,v_k}^w - \delta_{w^k, ws_k} (c_{u,v}^w + c_{u,v}^{w^k}) \/. \end{equation}

(c) Assume that $u_k =u$ or $v_k = v$. Then $N_{u,v}^{w, \ek} = 0$ in $\QK_T(X)$.

(d) The non-equivariant structure constant $N_{u,v}^{w,\ek}$ satisfies the positivity property:

\[ (-1)^{\ell(u) + \ell(v) - \ell(w)} N_{u,v}^{w, \ek} \ge 0 \/. \]

\end{thm}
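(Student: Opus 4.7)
The plan is to reduce everything to push-pull computations along the $\bP^1$-bundle $p: X = G/P \to G/P_k$, where $P_k$ is the parabolic generated by $P$ and $s_k$, and to exploit that for $k$-free parabolics this bundle is precisely the family of lines of class $\ek$. For (a), I first identify $\Mb_{0,3}(X, \ek)$: because $\ek$ is indivisible, every stable map of this class has smooth irreducible source embedded as a line, so
\[
\Mb_{0,3}(X, \ek) \;\cong\; X \times_{G/P_k} X \times_{G/P_k} X,
\]
with the three evaluation maps being the three projections. The $K$-theoretic divided difference satisfies $\partial_k = p^*p_*$, and $p_*p^* = \id$ since the fibers of $p$ are $\bP^1$. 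Two applications of the projection formula then transform the sheaf Euler characteristic on the triple fibered product into $\euler{X}(\partial_k[\cF] \cdot \partial_k[\cG] \cdot [\calH])$, and the Schubert specialization uses $\partial_k\cO^u = \cO^{u_k}$.

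For (b), I would invoke the definition of $\QK_T(X)$ recalled in \S\ref{s:QK}: the coefficient $N_{u,v}^{w,\ek}$ is determined by the $3$-point KGW invariants of degree $\ek$ together with the $2$-point invariants of the same degree, which enter through the inversion of the quantum metric. The same push-pull reasoning applied to $\Mb_{0,2}(X, \ek) \cong X \times_{G/P_k} X$ gives $\gw{\alpha, \beta}{\ek} = \euler{X}(\partial_k \alpha \cdot \beta)$, and assembling these together with (a) in the Schubert basis produces the identity (\ref{E:QKstr}). Part (c) is an immediate consequence of the projection formula: the hypothesis $u_k = u$ means $\cO^u = p^*\gamma$ for some $\gamma \in K_T(G/P_k)$, and then
\[
\partial_k(\cO^u \cdot \cO^v) = p^*p_*(p^*\gamma \cdot \cO^v) = p^*\gamma \cdot p^*p_*\cO^v = \cO^u \cdot \partial_k \cO^v = \partial_k \cO^u \cdot \partial_k \cO^v,
\]
so the quantum correction vanishes by (b).

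Part (d) is where I expect the main difficulty. By (c) we may assume $u_k = us_k > u$ and $v_k = vs_k > v$, so that $\ell(u_k)+\ell(v_k) = \ell(u)+\ell(v)+2$, and Brion's positivity for the non-equivariant $K(X)$ gives $(-1)^{\ell(u)+\ell(v)-\ell(w)} c_{u_k,v_k}^w \ge 0$ with the correct sign for the leading term of (\ref{E:QKstr}). The obstacle is the $\delta_{w^k, ws_k}$ correction: when this Kronecker symbol is nonzero one has $\ell(w^k) = \ell(w)+1$, so the Brion signs of $c_{u,v}^w$ and $c_{u,v}^{w^k}$ are opposite and term-by-term positivity is insufficient. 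My plan is to reinterpret the combination in (\ref{E:QKstr}) geometrically on $G/P_k$ by pushing forward structure sheaves of Richardson varieties from $X$ along $p$, so that a single application of Brion's theorem on the homogeneous space $G/P_k$ covers the entire expression at once. The geometry of lines meeting Schubert or Richardson varieties flagged in the abstract is what I expect to supply this geometric model, and making this reinterpretation precise is the main technical work.
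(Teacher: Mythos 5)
Your claim that $\Mb_{0,3}(X,\ek)$ is isomorphic to the triple fibre product $X\times_{G/P(k)}X\times_{G/P(k)}X$ is false, and the supporting assertion that every stable map of class $\ek$ with three markings has smooth irreducible source is also false. Once markings are present, stable maps bubble: over the small diagonal $\{(x,x,x)\}$ the fibre of the factored evaluation map $\Mb_{0,3}(X,\ek)\to X\times_{G/P(k)}X\times_{G/P(k)}X$ is $\Mb_{0,4}\cong\P^1$, so the map contracts a boundary divisor and is only birational. (Your identification $\Mb_{0,2}(X,\ek)\cong X\times_{G/P(k)}X$, on the other hand, is correct, and coincides with what the paper proves for the boundary stratum $\mathcal{D}$ in Prop.~\ref{prop:coincidence}.) The push-pull computation on the fibre product does yield $\euler{X}(\partial_k[\cF]\cdot\partial_k[\cG]\cdot[\calH])$ as you say, but to transfer this to the true moduli space you must argue cohomological triviality of $\Mb_{0,3}(X,\ek)\to X\times_{G/P(k)}X\times_{G/P(k)}X$ — available via Theorem~\ref{T:push}, since both sides have rational singularities and the general fibre is a point — and this step is missing. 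With that patch, your route to (a) is valid and arguably more uniform than the paper's, which first establishes the formula for opposite Schubert pairs $\cO_u,\cO^v$ via projected GW varieties and then applies $\partial_k$ as a $\Lambda$-module map to reach $\cO^u,\cO^v$. Your argument for (c) via $\cO^u=p^*\gamma$ and the projection formula is correct and cleanly bypasses the paper's geometric argument (Lemma~\ref{lemma:G01eq} comparing $\Gamma_{0,\ek}$ and $\Gamma_{\ek}$).

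The real gap is in (d), which you acknowledge but do not close, and your proposed fix — pushing Richardson structure sheaves down to $G/P(k)$ and applying Brion there — is not what works. The paper's mechanism stays on $X$: it rewrites $N_{u,v}^{w,\ek}$ as the difference of the $\cO^w$-coefficients in the Schubert expansions of $[\cO_{\Gamma_{\ek}(\wt u,v)}]$ and $[\cO_{\Gamma_{0,\ek}(\wt u,v)}]$ in $K(X)$, where $\Gamma_{\ek}(\wt u,v)=X(\wt u^k)\cap Y(v_k)$ is a Richardson variety and $\Gamma_{0,\ek}(\wt u,v)=\pi_k^{-1}\pi_k(X(\wt u)\cap Y(v))$ is the $\P^1$-bundle saturation of a smaller Richardson. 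Proposition~\ref{prop:coincidence} (precisely the $\Mb_{0,2}\cong X\times_{G/P(k)}X$ identification you already have) shows $\Gamma_{0,\ek}$ is irreducible with rational singularities, and Lemma~\ref{lemma:ample} shows its dimension is exactly one less than that of $\Gamma_{\ek}$ when $u_k\neq u$, $v_k\neq v$. Applying Brion's theorem separately to the two projected varieties then gives two signed inequalities whose parities are \emph{opposite} (because the dimensions differ by one), so the two contributions to $N_{u,v}^{w,\ek}=d^w-f^w$ both carry the correct overall sign. That dimension-drop-by-one observation, together with rationality of singularities of the boundary GW variety, is the idea your sketch is missing; term-by-term positivity of the explicit combination in (\ref{E:QKstr}) does indeed fail, as you correctly noticed, which is exactly why this geometric reformulation on $X$ (not $G/P(k)$) is necessary.
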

Parts (a)-(c) of the Theorem \ref{T:mainintro} generalize to homogeneous spaces $X=G/P$, where $P$ covers ``almost all" parabolic groups - see Defn. \ref{defn:P} below. In this case $Z_k(X) \neq X$ and a ``quantum" structure constant $N_{u,v}^{w,\ek}$ equals a combination of ``classical" structure constants in $K_T(Z_k(X))$; see Thms. \ref{T:mainKGW} and \ref{T:formulagenP} below for details. This ``quantum=classical" formula is in the spirit of similar formulas discovered by Buch-Kresch-Tamvakis \cite{buch.kresch.ea:gw} and extended by Chaput-Manivel-Perrin \cite{chaput.manivel.perrin} for the GW invariants on the (cominuscule) Grassmanians. This was recently generalized in \cite{buch.kresch.ea:qpr} for submaximal isotropic Grassmannians, and by N.~C.~Leung and the first author \cite{leung.li:functorial, leung.li:classical} for more general homogeneous spaces. An equivariant K-theory generalization of these results for cominuscule Grassmannians was obtained by Buch and the second author in \cite{buch.m:qk}. The vanishing property is generalized in Thm. \ref{T:vanishing} below. Combined with formula (\ref{E:QKstr}) this implies some remarkable identities among the ordinary structure constants in $K_T(X)$, which resemble the ``dc-triviality" and ``descent-cycling" conditions discovered by Knutson \cite{knutson:descent,knutson:noncomplex} in equivariant cohomology of $G/B$. A further study of such identities in the context of equivariant K Schubert Calculus should be of independent interest.

The positivity result in (d) generalizes a positivity result of Brion \cite{brion:Kpos} in the K-theory of $X$ (discovered by Buch \cite{buch:K} for Grassmannians). We conjecture - and partially prove - a generalization to the equivariant case in Rmk. \ref{remark:KTpos} below. To prove (d) we show that if $u_k \neq u$ and $v_k \neq v$ then both $c_{u_k,v_k}^w$ and $(c_{u,v}^w + c_{u,v}^{w^k})$ have the expected sign. The reason is that both are coefficients in the expansion of the classes of {\em projected Gromov-Witten varieties} in terms of Schubert classes; since the former have rational singularities, Brion's \cite[Thm. 1]{brion:Kpos} (cf. Thm. \ref{T:brion} below) implies the result. This leads us to the key technical facts needed in the proof of Thm. \ref{T:mainintro}, which we briefly explain next. Define \[\mathcal{D} = \Mb_{0, \{1,2,\bullet\}}(X, 0) \times_{X} \Mb_{0,\{\bullet, 3\}}(X,\ek) \subset \Mb_{0,3}(X,\ek) \] to be the boundary component containing a general map $f:C_1 \cup C_2 \to X$, where $C_i \simeq \P^1$, the first two marked points are on $C_1$ (which is collapsed through $f$), and the third point is on $C_2$; the map to $X$ is given by evaluating at the intersection $\{ \bullet\} = C_1 \cap C_2$. Define the {\em (boundary) Gromov-Witten varieties} $GW_{\ek}(z,v) \subset \Mb_{0,3}(X, \ek)$ respectively $GW_{0,\ek}(z,v) \subset \mathcal{D}$ to be \[ \ev_1^{-1} (X(z)) \cap \ev_2^{-1}(Y(v)) \/,\] where $\ev_i:\Mb_{0,3}(X,d) \to X$ is the evaluation map at the $i$-th marking, or its restriction to $\mathcal{D}$. The projected GW varieties are subvarieties of $X$ defined by \[ \Gamma_{\ek}(z,v) = \ev_3(GW_{\ek}(z,v)); \quad  \Gamma_{0,\ek}(z,v) = \ev_3(GW_{0,\ek}(z,v)) \/. \]
In other words, $\Gamma_{\ek}(z,v)$ is the locus consisting of the union of lines $\ell \subset X$ of degree $\ek$ which intersect the Schubert varieties $X(z)$ and $Y(v)$. Similarly, $\Gamma_{0, \ek}(z,v)$ consists of lines which intersect the Richardson variety $X(z) \cap Y(v)$. Our main technical result is the following (cf.~ Thm. \ref{T:main}, Prop. \ref{prop:coincidence} and Lemma \ref{lemma:ample} below): \begin{thm}\label{T:maintechintro} Let $P$ be a $k$-free parabolic group. Then:

(a) The projected GW variety $\Gamma_{\ek}(z,v)$ equals the Richardson variety $X(z^k) \cap Y(v_k)$.

(b) The projected (boundary) GW variety $\Gamma_{0,\ek}(z,v)$ has rational singularities, and there are inclusions \[ X(z) \cap Y(v) \subset \Gamma_{0,\ek}(z,v) \subset X(z^k) \cap Y(v_k) =\Gamma_{\ek}(z,v)\/.\] If $z^k = z$ or $v_k = v$ then $\Gamma_{0,\ek}(z,v) =  \Gamma_{\ek}(z,v)$; the inclusions are strict otherwise.

(c) The evaluation map $\ev_3:GW_{\ek}(z,v) \to \Gamma_{\ek}(z,v)$ is cohomologically trivial, i.e.  $(\ev_3)_*\cO_{GW_{\ek}(z,v)} = \cO_{\Gamma_{\ek}(z,v)}$ and the higher direct images $R^i (\ev_3)_*\cO_{GW_{\ek}(z,v)} =0$ for $i >0$. The same holds for the restriction $\ev_3:GW_{0,\ek}(z,v) \to \Gamma_{0,\ek}(z,v)$.
\end{thm}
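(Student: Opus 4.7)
The plan rests on the fact that for $k$-free $P$, the projection $\pi: X = G/P \to G/P' =: B$, where $P' \supset P$ is the parabolic whose Levi is generated by that of $P$ together with $s_k$, is a Zariski-locally trivial $\bP^1$-bundle whose fibers are precisely the lines of class $\ek$. This yields the identifications $X(z^k) = \pi^{-1}(\pi(X(z)))$ and $Y(v_k) = \pi^{-1}(\pi(Y(v)))$ (the $P_k$-saturations of the Schubert varieties), together with $\Mb_{0,3}(X,\ek) = X \times_B X \times_B X$ and $\mathcal{D} = X \times_B X$, the $\ev_i$ becoming coordinate projections (and $\ev_1 = \ev_2$ on $\mathcal{D}$). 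Throughout, the main ingredient will be Brion's theorem (Thm.~\ref{T:brion}) that projections of Richardson varieties between partial flag varieties are cohomologically trivial onto images with rational singularities. For (a), the containment $\Gamma_{\ek}(z,v) \subset X(z^k) \cap Y(v_k)$ is automatic from the saturation description; conversely, for $x \in X(z^k) \cap Y(v_k)$ the fiber $\ell = \pi^{-1}(\pi(x))$ lies inside $X(z^k) \cap Y(v_k)$ and, because $\pi(X(z)) = \pi(X(z^k))$ and $\pi(Y(v)) = \pi(Y(v_k))$, meets both $X(z)$ and $Y(v)$, exhibiting $x$ as an $\ev_3$-image.

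For (b), the boundary identification gives $GW_{0,\ek}(z,v) = (X(z) \cap Y(v)) \times_B X$, whence $\Gamma_{0,\ek}(z,v) = \pi^{-1}(\pi(X(z) \cap Y(v)))$, and the chain of inclusions is formal. The projected Richardson $\pi(X(z) \cap Y(v))$ has rational singularities by Brion, and $\Gamma_{0,\ek}(z,v)$ inherits them as a $\bP^1$-bundle over it. For strictness, when $z^k = z$ the variety $X(z)$ is $P_k$-stable and an elementary manipulation of preimages yields $\Gamma_{0,\ek}(z,v) = X(z) \cap Y(v_k) = \Gamma_{\ek}(z,v)$ (symmetric for $v_k = v$). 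When both $z^k \neq z$ and $v_k \neq v$, birationality of $\pi|_{X(z)}$ and $\pi|_{Y(v)}$ yields
\[
\dim \pi(X(z) \cap Y(v)) = \ell(z) - \ell(v) < \ell(z) - \ell(v) + 1 = \dim \pi(X(z^k) \cap Y(v_k)),
\]
forcing strict containment.

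For (c), the boundary case follows at once from flat base change along the flat morphism $\pi$:
\[
R(\ev_3)_* \cO_{GW_{0,\ek}(z,v)} = \pi^* R(\pi|_{X(z) \cap Y(v)})_* \cO_{X(z) \cap Y(v)} = \pi^* \cO_{\pi(X(z) \cap Y(v))} = \cO_{\Gamma_{0,\ek}(z,v)},
\]
where the middle equality is Brion's cohomological triviality for the Richardson $X(z) \cap Y(v)$. For the full case, writing $GW_{\ek}(z,v) = U \times_B X$ with $U = X(z) \times_B Y(v)$, flat base change reduces the claim to $Rp_*\cO_U = \cO_{p(U)}$ for $p: U \to B$. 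The plan is to factor $p$ as $U \xrightarrow{\operatorname{pr}_1} X(z) \cap Y(v_k) \xrightarrow{\pi} p(U)$: the second map is cohomologically trivial by Brion, and the first projection has image $X(z) \cap Y(v_k)$ with fiber $Y(v) \cap \pi^{-1}(\pi(x))$ over $x$---a $\bP^1$ when $v_k = v$ (making $\operatorname{pr}_1$ a $\bP^1$-bundle) and a single reduced point generically when $v_k \neq v$ (making $\operatorname{pr}_1$ birational).

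The principal obstacle will be cohomological triviality of $\operatorname{pr}_1$ in the birational case $v_k \neq v$, for which one must establish that the fiber-product variety $U$---which is not itself a Richardson variety---has rational singularities. The plan is to realize $U$ as an iterated base change of cohomologically trivial projections of Richardson varieties, each step controlled by Brion's theorem. Once this is verified, parts (a)--(c) assemble by formal manipulation with flat base change and the $\bP^1$-bundle structure of $\pi$.
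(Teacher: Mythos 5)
Your treatment of parts (a), (b), and the boundary half of (c) follows essentially the same geometric template as the paper: view $\pi_k\colon G/P \to G/P(k)$ as a $\P^1$-bundle whose fibers are the lines of class $\ek$, identify $\mathcal{D} \simeq G/P \times_{G/P(k)} G/P$ (this is exactly Prop.~\ref{prop:coincidence}(a)), and invoke the cohomological triviality of projections of Richardson varieties. A small mis-attribution: this last fact is Lemma~\ref{lemma:projrich} (due to Knutson--Lam--Speyer and Billey--Coskun), not Thm.~\ref{T:brion}, which is Brion's positivity theorem for expansions of $[\cO_Y]$ in Schubert classes; Brion does supply rational singularities of Richardson varieties, but not the cohomological triviality of their projections. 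Also, using the symbol $B$ for $G/P(k)$ collides with the Borel subgroup; I will write $G/P(k)$ below.

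There is, however, a genuine gap in your treatment of the full case of part (c). The identification $\Mb_{0,3}(X,\ek) \cong X \times_{G/P(k)} X \times_{G/P(k)} X$ is false. The map $\EV$ to the triple fiber product is a proper birational morphism (since each stable map of class $\ek$ has image in a single $\pi_k$-fiber), but it is not an isomorphism: fiberwise it is the map $\Mb_{0,3}(\P^1,1) \to (\P^1)^3$, which is the blowup of $(\P^1)^3$ along the small diagonal --- over the small diagonal the fiber of $\EV$ is a $\P^1$, parametrizing the cross-ratio of the three markings and the node on the contracted bubble. (By contrast, $\Mb_{0,2}(\P^1,1) \cong (\P^1)^2$ is an isomorphism, which is why your identification of $\mathcal D$ and the boundary case of (c) are correct.) Consequently $GW_{\ek}(z,v)$ is \emph{not} equal to $(X(z)\times_{G/P(k)}Y(v))\times_{G/P(k)}X$; it is the scheme-theoretic preimage of that fiber product under the blowdown $\EV$, and the flat base change argument for $\ev_3$ does not apply as written. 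The paper sidesteps this entirely: it applies Koll\'ar's vanishing criterion (Thm.~\ref{T:push}) directly to $\ev_3\colon GW_{\ek}(z,v)\to\Gamma_{\ek}(z,v)$, verifying that $GW_{\ek}(z,v)$ has rational singularities (citing \cite{buch.chaput.ea:finiteness}), that the target is a Richardson variety (hence has rational singularities), and --- the key geometric input --- that the general fiber of $\ev_3$ is rational, using Lemma~\ref{lemma:intersection} to show that a line through a general point of $X(z^k)^o$ either meets $X(z)$ transversally in a single reduced point or is wholly contained in it. To rescue your route one would have to show in addition that the restricted blowdown $GW_{\ek}(z,v)\to (X(z)\times_{G/P(k)}Y(v))\times_{G/P(k)}X$ is itself cohomologically trivial, which again requires the same rational-singularities and fiber-rationality analysis, so there is no shortcut here.
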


Part (c) of the Theorem \ref{T:maintechintro} allows us to transfer a computation in the K-theory of the moduli spaces $\Mb_{0,3}(X, \ek)$ or $\mathcal{D}$ to one on $X$, while parts (a) and (b) help to compute explicitly the resulting expressions in $K_T(G/P)$. The proof of this Theorem relies heavily on the geometry of spaces of lines in $G/P$. We believe that similar results hold in a much greater generality (for all degrees $d$, and all projected GW varieties), and this is a particular instance of that phenomenon.\\

{\em Acknowledgements.} C.~L.~wishes to thank   Naichung Conan Leung and Bumsig Kim for valuable suggestions and constant encouragement during the preparation of this project.~L.~M. wishes to thank his collaborators Anders Buch, Pierre-Emmanuel Chaput and Nicolas Perrin for inspiring conversations. He is grateful to Allen Knutson for an insightful discussion about the geometry of lines on $G/B$ and to Dave Anderson for some useful remarks about the equivariant K-theory. The authors are thankful to the referees for a careful reading of this paper and valuable comments.

\section{Preliminaries} The goal of this section is to establish the notation and the basic definitions used throughout the paper.

\subsection{Lie data}\label{ss:Lie}

Let $G$ be a  simple, simply connected, complex  Lie group, and fix $T \subset B \subset G$ a Borel subgroup of $G$ containing a maximal torus $T$. Let $W = N(T)/T$ be the associated Weyl group, where $N(T)$ denotes the normalizer of the torus. Each $w \in W$ has a length $\ell(w)$; denote by $w_0$ the longest element in $W$, and by $id$ the identity. Associated to this datum one has the set of roots $R$, positive roots $R^+$, and simple roots $\Delta = \{\alpha_1, ..., \alpha_r \}$. Recall that $W$ is generated by the simple reflections $s_i=s_{\alpha_i}$, for $\alpha_i \in \Delta$. Let $(-,-)$ denote the $W$-invariant inner product on $\mathbb{R}\Delta$.
Each root $\alpha \in R$ has a coroot $\alpha^\vee = \frac{2\alpha}{(\alpha,\alpha)}$.
The coroots form the dual root system $R^\vee = \{ \alpha^\vee \mid \alpha \in R
\}$, with a basis of simple coroots $\Delta^\vee = \{ \beta^\vee \mid \beta \in
\Delta \}$.  For $\beta \in \Delta$ we let $\omega_\beta \in \mathbb{R}\Delta$
denote the corresponding fundamental weight, defined by $(\omega_\beta,
\alpha^\vee) = \delta_{\alpha,\beta}$ for $\alpha \in \Delta$.

For the parabolic subgroup $P \supset B$ we denote by $\Delta_P \subset \Delta$ the subset of simple roots in $P$, and by $W_P$ the subgroup of $W$ generated by the reflections of roots in $\Delta_P$. Let $W^P$ be the set of minimal length representatives for cosets in $W/W_P$. It can be characterized as \[ W^P = \{ w\in W: w(\alpha) > 0, \forall \alpha \in \Delta_P \} \/; \] see e.g. \cite[Ch.2,\S 5.1]{gon.lak:flagv}. Then for each coset in $W/W_P$ define $\ell(wW_P) = \ell(\overline{w})$ where $\overline{w}$ is the minimal length representative in the coset $wW_P$. Recall that there is a partial order on $W$ called the {\em Bruhat order}, which is determined by the covering relations for $u \le v$ if and only if $v= u s_\alpha$, for $\alpha \in R^+$ and $\ell(v) > \ell(u)$. This induces a partial order on $W/W_P$ by projection: $uW_P \le vW_P$ if and only if $uw \le vw'$ for some $w, w' \in W_P$; we also refer to this as Bruhat order.

Let $\alpha_k \in \Delta \setminus \Delta_P$ and $s_k$ the corresponding simple reflection. For $w \in W$ we denote by $w_k$ respectively $w^k$ the Weyl group elements
\begin{equation}\label{E:hecke} w_k= \left \{
\begin{array}{ll}
ws_{k} & \textrm{if } \ell(ws_{k}) < \ell(w) \\
w & \textrm { otherwise}
\end{array} \right., \quad w^k= \left \{
\begin{array}{ll}
ws_{k} & \textrm{if } \ell(ws_{k}) > \ell(w) \\
w & \textrm { otherwise}
\end{array} \right. . \end{equation}

\begin{defn} A parabolic subgroup $P$ is called {\em $k$-free} if $\Delta_P$ does not contain any simple root adjacent to $\alpha_k$ in the Dynkin diagram of $G$.
\end{defn}

For example, any Borel subgroup is $k$-free.

\begin{lemma}\label{lemma:minrep} Let $P$ be a $k$-free parabolic group and $w \in W^P$ a minimal length representative. Then both $w_k$ and $w^k$ are minimal length representatives in $W^P$. \end{lemma}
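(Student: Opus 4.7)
The plan is to reduce to checking a single element and then use the characterization of $W^P$ via positivity on simple roots of $P$.

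First I would observe that, by the definition (\ref{E:hecke}), each of $w_k$ and $w^k$ is either $w$ itself or $ws_k$. Since $w\in W^P$ is given, the only thing that requires proof is that $ws_k\in W^P$ whenever it differs from $w$. Moreover, $ws_k$ and $w$ lie in different cosets of $W_P$ because $s_k\notin W_P$ (as $\alpha_k\notin\Delta_P$), so once we know $ws_k\in W^P$ it is automatically the minimal length representative of its own coset.

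Next I would invoke the characterization $W^P=\{w\in W : w(\alpha)>0\text{ for all }\alpha\in\Delta_P\}$ recalled from \cite[Ch.2,\S 5.1]{gon.lak:flagv} in \S \ref{ss:Lie}. For any $\alpha\in\Delta_P$,
\[
s_k(\alpha)=\alpha-(\alpha,\alpha_k^\vee)\alpha_k.
\]
The hypothesis that $P$ is $k$-free means exactly that no $\alpha\in\Delta_P$ is adjacent to $\alpha_k$ in the Dynkin diagram of $G$, i.e.\ $(\alpha,\alpha_k^\vee)=0$; combined with $\alpha\neq\alpha_k$, this gives $s_k(\alpha)=\alpha$ for every $\alpha\in\Delta_P$.

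Consequently $ws_k(\alpha)=w(\alpha)>0$ for each $\alpha\in\Delta_P$, so $ws_k\in W^P$, which together with the first paragraph yields $w_k,w^k\in W^P$. There is no real obstacle here; the whole argument is essentially the observation that $s_k$ centralizes $\Delta_P$ when $P$ is $k$-free, so right multiplication by $s_k$ preserves the defining positivity condition for $W^P$.
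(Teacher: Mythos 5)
Your proof is correct and takes essentially the same route as the paper: reduce to showing $ws_k\in W^P$, then observe that $k$-freeness forces $s_k(\alpha)=\alpha$ for all $\alpha\in\Delta_P$, so $ws_k(\alpha)=w(\alpha)>0$. You simply spell out the reflection formula $s_k(\alpha)=\alpha-(\alpha,\alpha_k^\vee)\alpha_k$ that the paper leaves implicit.
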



\begin{proof} It suffices to show that $ws_k \in W^P$. For any $\alpha \in \Delta_P$, the hypothesis on $P$ implies that $s_k(\alpha) = \alpha$. Then $ws_k (\alpha) = w(\alpha) > 0$ since $w \in W^P$ and we are done. \end{proof}


Note that $k$-freeness is necessary for the lemma to hold. For example, take $G=\SL_3(\C)$ with $\Delta= \{ \alpha_1, \alpha_2\}$, then take $P$ with $\Delta_P = \{ \alpha_2\}$. If $w = s_2 s_1$ then $w \in W^P$ but $w_1 = s_2 \notin W^P$.
We will see later that for $G/P$ where $P$ is $k$-free, the moduli space of pointed lines of degree $\ek= \alpha_k^\vee$ on $G/P$ can be identified to $G/P$ itself.
The elements $w^k, w_k$ can also be defined using the nil-Hecke, respectively the opposite nil-Hecke products on $W$. For example, $w^k = w \cdot s_k$ where $\cdot$ is the ordinary (length-increasing) nil-Hecke product on $W$ - see \cite{buch.m:nbhd}. We also notice that in our conventions $\alpha_0$ is never a root, so the index $k \neq 0$ in $\alpha_k$; thus $w_0$ is never obtained as in equation (\ref{E:hecke}) above.

\subsection{Equivariant K-theory}\label{ss:KT} Let $T \subset B$ be the maximal torus. For now let $X$ be a complex, irreducible, projective $T$-variety. Denote by $a:T \times X \to X$ the $T$-action and by $p_X: T \times X \to X$ the projection. We recall next the definition of $K_T(X)$ - the equivariant K-theory of $X$ - and its properties, following \cite[Ch. 5]{chriss.ginzburg}, \cite[\S 3.3]{brion:flagv} and \cite[\S 15.1]{fulton:intersection}. An {\em equivariant sheaf\/} on $X$ is a coherent $\cO_X$-module $\cF$ together with a given isomorphism $I : a^*\cF \cong p_X^*\cF$; this isomorphism has the property that $(m \times \id_X)^* I = p_{23}^*I \circ (\id_T
\times a)^*I$ as morphisms of sheaves on $T \times T \times X$, where
$m$ is the group operation on $T$ and $p_{23}$ is the projection to
the last two factors of $T \times T \times X$.

The {\em equivariant $K$-homology group\/} $K_T(X)$ is the
Grothendieck group of equivariant sheaves on $X$, i.e.\ the free
abelian group generated by isomorphism classes $[\cF]$ of equivariant
sheaves, modulo the relations $[\cF] = [\cF'] + [\cF'']$ if
there exists an equivariant exact sequence $0 \to \cF' \to \cF \to
\cF'' \to 0$.  This group is a module over the {\em equivariant
  $K$-cohomology ring\/} $K^T(X)$, defined as the Grothendieck group
of equivariant vector bundles on $X$.  Both the multiplicative
structure of $K^T(X)$ and the module structure of $K_T(X)$ are given
by tensor products.  If $\cF, \cG$ are equivariant sheaves there is a product in equivariant K-homology \[ [\cF] \cdot [\cG] = \sum_j (-1)^j [Tor^X_j (\cF, \cG)] \/, \] where $Tor_j^X$ is the $j$-th $Tor$ sheaf. In order for the product to be well-defined one requires that $\cF, \cG$ have finite resolutions by equivariant vector bundles. This happens if $X$ is smooth \cite[5.1.28]{chriss.ginzburg}, or if $\cF,\cG$ are pull-backs of equivariant coherent sheaves via an equivariant, flat morphism $g:Z \to X$. In particular, if $X$ is non-singular, the map $K^T(X) \to K_T(X)$ which sends a vector bundle to its sheaf of sections is an isomorphism.



Given an equivariant morphism of $T$-varieties $f : X \to Y$, there is
a ring homomorphism $f^* : K^T(Y) \to K^T(X)$ defined by pullback of
vector bundles. If $Y=pt$, this determines a $K^T(pt)$-module structure of $K^T(X)$, and therefore also a $K^T(pt)$-module structure on $K_T(X)$, via the module map $K^T(X) \otimes K_T(X) \to K_T(X)$. Recall that $K^T(pt) = R(T)$ - the character ring of $T$ which is a free abelian group with basis consisting of characters $e^\lambda$; set $\Lambda:=R(T)$.
If $f$ is proper, then there is also a pushforward
map $f_* : K_T(X) \to K_T(Y)$ defined by $f_*[\cF] = \sum_{i \geq 0} (-1)^i [R^i f_* \cF]$.  This map is a homomorphism of $K^T(Y)$-modules
by the projection formula \cite[Ex.~III.8.3]{hartshorne}. Both pullback and pushforward are functorial with respect to composition of morphisms.~Considering the (proper) morphism $\rho:X \to pt$ and $[\cF] \in K_T(X)$ we denote $\rho_*[\cF] \in \Lambda$ by $\int_X [\cF]$. Notice that \[ \int_X [\cF] = \euler{X}(\cF) = \sum_{i=0}^{\dim X} (-1)^i ch_T~H^i(X, \cF) \] is the {\em equivariant sheaf Euler characteristic} of the sheaf $\cF$, where $ch_T~M$ denotes the character of the $T$-module $M$. We will occasionally use $\euler{X}(\cF)$ instead of $\int_X [\cF]$.

Recall that the variety $X$ has {\em rational singularities\/} if
there exists a desingularization $\pi : \wt X \to X$ for which $\pi_*
\cO_{\wt X} = \cO_X$ and $R^i \pi_* \cO_{\wt X} = 0$ for $i>0$.  It turns out that if $X$ has rational singularities then it is normal, and if one desingularization satisfies the aforementioned properties then all do - see e.g. \cite{brion:flagv} and references therein. In general, we say a morphism $f:X \to Y$ is {\em cohomologically trivial~}if $f_* \cO_X = \cO_Y$ and $R^i f_* \cO_X = 0$ for $i > 0$. This will be a key property in this paper. The main tool to prove cohomological triviality is the following result, proved in \cite[Thm. 3.1]{buch.m:qk}, and which is based on a Theorem of Koll\'ar \cite{kollar:higher}:


\begin{thm} \label{T:push}
  Let $f : X \to Y$ be a surjective equivariant map of projective
  $T$-varieties with rational singularities.  Assume that the general
  fiber of $f$ is rational, i.e.\ $f^{-1}(y)$ is an irreducible
  rational variety for all closed points in a dense open subset of
  $Y$.  Then $f_* \cO_X = \cO_Y$ and $R^i f_* \cO_X = 0$ for $i > 0$. In particular, $f_*[\cO_X] = [\cO_Y] \in K_T(Y)$.
\end{thm}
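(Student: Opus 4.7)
The strategy is to reduce the statement to Kollár's theorem on the vanishing of higher direct images of the structure sheaf under morphisms whose general fiber is rational. As a first step I would choose a $T$-equivariant resolution of singularities $\pi:\wt X\to X$. Because $X$ has rational singularities, $\pi_*\cO_{\wt X}=\cO_X$ and $R^j\pi_*\cO_{\wt X}=0$ for all $j>0$. The Leray spectral sequence for the composition $g:=f\circ\pi:\wt X\to Y$ then degenerates, yielding $R^ig_*\cO_{\wt X}\cong R^if_*\cO_X$ for every $i\ge 0$. Hence it is enough to prove both statements with $(X,f)$ replaced by $(\wt X,g)$, so I may assume from now on that the source is smooth.

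Next I would transfer the rationality hypothesis to $g$: for a general point $y\in Y$, the fiber $g^{-1}(y)=\pi^{-1}(f^{-1}(y))$ is birational via $\pi$ to $f^{-1}(y)$, and since rationality is a birational invariant of smooth projective varieties, $g^{-1}(y)$ is also rational. I would then invoke Kollár's theorem \cite{kollar:higher} for the surjective morphism $g$ from the smooth projective variety $\wt X$ to the projective variety $Y$ with rational general fiber, which delivers $R^ig_*\cO_{\wt X}=0$ for all $i>0$. Combined with the previous paragraph this proves the higher direct image vanishing.

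For the identification $f_*\cO_X=\cO_Y$, I would use the Stein factorization $f=h\circ f'$ with $f':X\to Y':=\Spec_Y f_*\cO_X$ and $h:Y'\to Y$ finite. Since the general fiber of $f$ is rational and in particular connected, $h$ has generic degree one and is therefore birational. As $Y$ has rational (hence normal) singularities, Zariski's main theorem forces the finite birational morphism $h$ to be an isomorphism, so $f_*\cO_X=\cO_Y$. All ingredients - equivariant resolution, Leray spectral sequence, and Stein factorization - carry compatible $T$-actions, so the whole argument is equivariant, and the K-theoretic identity $f_*[\cO_X]=[\cO_Y]$ follows directly from the definition $f_*[\cO_X]=\sum_{i\ge 0}(-1)^i[R^if_*\cO_X]$.

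The main obstacle will be to cite the correct form of Kollár's result: the classical statement in \cite{kollar:higher} is phrased for $R^if_*\omega_X$, and one needs the analogous vanishing for $R^if_*\cO_X$ under the same hypotheses; this is in fact also proved in \cite{kollar:higher}, and is the form used here. Once that is in hand, the remaining reductions (to smooth source via rational singularities, and $f_*\cO_X=\cO_Y$ via Stein factorization and normality) are standard and formal.
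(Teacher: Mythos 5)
Your plan has a genuine gap at the invocation of Kollár's theorem, and the remark at the end dismisses exactly the point where the argument fails. You reduce to $g:\wt X\to Y$ with $\wt X$ smooth projective, $Y$ projective, and rational general fiber, and then claim that Kollár's paper delivers $R^ig_*\cO_{\wt X}=0$ for $i>0$. This is false without the hypothesis that $Y$ has rational singularities, which your argument never uses at that step (you only use normality of $Y$, for the $f_*\cO_X=\cO_Y$ part via Stein factorization). For a counterexample, let $Y$ be the projective cone over a smooth elliptic curve $E$ (a normal surface with a non-rational singularity at the vertex), let $\rho:\wt Y\to Y$ be the blow-up of the vertex, and let $\wt X$ be any $\P^1$-bundle $p:\wt X\to\wt Y$ with $g=\rho\circ p$. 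Then $\wt X$ is smooth projective, the general fiber of $g$ is $\P^1$, but by Leray $R^1g_*\cO_{\wt X}\cong R^1\rho_*\cO_{\wt Y}\neq 0$ since $Y$ is not a rational singularity. So the $\cO_X$-analogue of Kollár's theorem over a merely normal projective base does not hold, and it cannot be ``also proved in \cite{kollar:higher}'' as a black box; the passage from the $\omega_X$-statements that Kollár actually proves to the $\cO_X$-vanishing is exactly where rational singularities of $Y$ must enter.

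The paper itself does not supply a proof but cites \cite[Thm.~3.1]{buch.m:qk}. The argument there begins the same way you do — reduce to $X$ smooth via an equivariant resolution and the Leray spectral sequence — but then it also resolves $Y$, works with a resolution $\wt{X'}$ dominating both $X$ and $\wt Y$, applies Kollár's theorem (in its $\omega$-form, together with duality and the vanishing of $H^i(F,\cO_F)$ for the rational general fiber $F$) to the morphism $\wt{X'}\to\wt Y$ whose target is smooth, obtaining $R(\wt{X'}\to\wt Y)_*\cO_{\wt{X'}}=\cO_{\wt Y}$, and only then pushes down along $\wt Y\to Y$ using the rational singularities of $Y$. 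To repair your proposal you need to insert a resolution of $Y$ and invoke the rational singularities of $Y$ when descending from $\wt Y$ to $Y$; the rest of your reductions (Leray on the source side, Stein factorization for $f_*\cO_X=\cO_Y$, equivariance) are fine.
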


\subsection{Schubert classes and $K_T(G/P)$}\label{ss:schubert} Consider now the flag variety $X = G/P$, endowed with the $T$-action obtained by restricting the $G$-action defined by left multiplication. For each $w \in W^P$ there are the {\em Schubert cells} $X(w)^o = BwP/P$ and $Y(w)^o = B^-wP/P$, and their closures, the {\em Schubert varieties} $X(w)= \overline{BwP/P}$ and $Y(w) = \overline{B^- wP/P}$ where $B^- = w_0 B w_0$ is the opposite Borel subgroup. Note that $\dim X(w) = \textrm{codim}~Y(w) = \ell(w)$ and that $X(w) \cap Y(w)$ consists of a unique, $T$-fixed point which we denote by $e_w$. The Schubert varieties have $T$-equivariant sheaves of regular functions $\cO_{X(w)}, \cO_{Y(w)}$ which in turn determine {\em Schubert classes} $\cO_w = [\cO_{X(w)}]$ and $\cO^w = [\cO_{Y(w)}]$ in $K^T(X)=K_T(X)$ - the $T$-equivariant K-theory of $X$. It is well known that the set of Schubert classes $\{ \cO^w \}_{w \in W^P}$ and $\{\cO_w\}_{w \in W^P}$ both form a $\Lambda$-basis of $K_T(X)$. For this and other basic facts about $K_T(X)$ which will be mentioned below see e.g. \cite{graham.kumar:positivity}.

The push-forward to a point determines a non-degenerate $\Lambda$-pairing \[ <[E], [E']> = \int_X E \otimes E' \in \Lambda; \quad [E],[E'] \in K^T(X) \/. \] The dual $(\cO^w)^\vee$ of $\cO^w$ with respect to this pairing is the class $\xi_w := [\cO_{X(w)} (-\partial X(w))]$, where $\partial X(w) = X(w) \setminus X(w)^o$ is the boundary divisor of $X(w)$. Using this pairing one can define the structure constants $c_{u,v}^w \in \Lambda$ of the equivariant K-theory ring by \[ \cO^u \cdot \cO^v = \sum_w c_{u,v}^w \cO^w; \quad c_{u,v}^w = <\cO^u \cdot \cO^v , \xi_w > = \int_X \cO^u \cdot \cO^v \cdot \xi_w \/. \] The structure constants $c_{u,v}^w$ have been heavily studied, and they will be the building blocks required to compute the structure constants of (equivariant) quantum K-theory - see \S \ref{s:QK} below. Explicit formulas and algorithms to compute these coefficients can be found e.g. in \cite{kostant.kumar:KT,lenart.postnikov:KT}.

If $P \subset Q $ are two parabolic subgroups containing $T$, there is a natural $G$-equivariant projection map $\pi:G/P \to G/Q$. The inclusion of the parabolic subgroups determines an inclusion of groups $W_P \subset W_Q$, and therefore a set inclusion $W^Q \subset W^P$. Note also that if $\Omega \subset G/P$ and $\Omega' \subset G/Q$ are Schubert varieties (either $B$ or $B^-$-stable), then $\pi(\Omega)$ and $\pi^{-1}(\Omega')$ are also Schubert varieties, and it is easy to find the associated Weyl group elements. The situations we will encounter most often are $\pi^{-1}(Y(w)) = Y(w)$ and $\pi(X(w)) = X(\overline{w})$ where $\overline{w} \in W^Q$ is the minimal length representative for the coset $wW_Q$. The projection $\pi$ induces an injection $\pi^*: K^T(G/Q) \to K^T(G/P)$ and since $\pi$ is flat $\pi^* [\cO_{\Omega'}] = [\cO_{\pi^{-1} (\Omega')}]$. Using e.g. Frobenius splitting arguments one can show that $\pi: \Omega \to \pi(\Omega)$ is cohomologically trivial, therefore $\pi_*[\cO_\Omega] = [\cO_{\pi(\Omega)}]$ - cf. \cite[Thm. 3.3.4]{brion.kumar:frobenius}. We will need a generalization of this from Schubert to {\em Richardson varieties} $R_u^v := X(u) \cap Y(v)$. This is nonempty exactly when $v \le u$ in the Bruhat order. In this case $R_u^v$ is an irreducible $T$-variety of dimension $\ell(u) - \ell(v)$; see \cite{brion:Kpos} for more details.

\begin{lemma}\label{lemma:projrich} Let $P \subset Q$ be two parabolic subgroups and $\pi:G/P \to G/Q$ the natural projection. Let $R \subset G/P$ be a non-empty Richardson variety. Then both $R$ and $\pi(R)$ have rational singularities and the restriction morphism $\pi:R \to \pi(R)$ is cohomologically trivial. \end{lemma}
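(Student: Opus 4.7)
The plan is to verify the three hypotheses of Theorem~\ref{T:push} for the restriction $\pi|_R \colon R \to \pi(R)$: rational singularities of $R$ and of $\pi(R)$, and rationality of the general fiber of $\pi|_R$. Rational singularities of $R$ are classical: Brion showed that Richardson varieties in any partial flag variety have rational singularities (\cite{brion:Kpos}; see also \cite{brion.kumar:frobenius}), which settles the first input.

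The rational singularities of $\pi(R)$ are the subtle point. I would \emph{not} expect $\pi(R)$ to equal the Richardson $X(u') \cap Y(v') \subseteq G/Q$, where $u', v' \in W^Q$ are the images of $u, v$ under $W/W_P \twoheadrightarrow W/W_Q$: only the inclusion $\pi(R) \subseteq X(u') \cap Y(v')$ holds in general, and simple rank-three examples (e.g.\ $G=\SL_4$, $\Delta_Q=\{\alpha_2\}$, $u = s_1 s_2 s_3$, $v = s_2$) show that this inclusion can be strict. Instead, I would appeal to Frobenius splitting: the canonical $B$-splitting of $G/P$ compatibly splits $R$, and the $G$-equivariant projection $\pi$ descends this to a compatible splitting of $G/Q$ under which $\pi(R)$ is a split subvariety. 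In characteristic zero, compatibly split subvarieties of a smooth projective variety have rational singularities via standard reduction mod $p$ arguments (see \cite{brion.kumar:frobenius}).

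For the rationality of a general fiber of $\pi|_R$ over the dense open of $\pi(R)$, I would use the parabolic factorization $W^P = W^Q \cdot (W_Q \cap W^P)$ to write $u = u' \cdot x$ and $v = v' \cdot y$ with $x, y \in W_Q \cap W^P$, and then track the Bruhat decomposition of $G$ to describe each fiber inside $\pi^{-1}(z) \cong Q/P$ as an intersection of (translates of) Schubert-type subvarieties indexed by $x$ and $y$. Up to translation by a lift of $z$, the generic fiber is a Richardson variety in $Q/P$, hence irreducible and rational (or, in the degenerate case where the $Q/P$-Richardson is $0$-dimensional, a single point, which is trivially rational). With all three inputs in hand, Theorem~\ref{T:push} directly yields the cohomological triviality of $\pi|_R$.

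The main obstacle is establishing the rational singularities of $\pi(R)$: since $\pi(R)$ is not in general a Richardson variety in $G/Q$, one cannot reduce directly to Brion's theorem, and the Frobenius-splitting descent along the $G$-equivariant projection $\pi$ becomes the essential new ingredient. The fiber analysis of the third step, while routine case by case, also requires some care to cover the various configurations of $x$ and $y$ in $W_Q \cap W^P$.
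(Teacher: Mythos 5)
The paper's own ``proof'' is simply a citation: Brion~\cite{brion:Kpos} for $R$ having rational singularities, and Knutson--Lam--Speyer~\cite{knutson-lam-speyer:projections} or Billey--Coskun~\cite{billey.coskun:richardson} for the remaining assertions. Your overall plan --- verify the hypotheses of Theorem~\ref{T:push} --- is the right one, and you correctly observe that $\pi(R)$ need not equal the naively indexed Richardson variety $X(u')\cap Y(v')$ in $G/Q$; your sketch is in effect an outline of what those two references carry out.

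However, the step you yourself flag as ``the subtle point'' contains a genuinely false claim: compatibly split subvarieties of a smooth projective variety do \emph{not} automatically have rational singularities. Frobenius splitting only forces the subvariety to be reduced and weakly normal. For instance, an irreducible nodal plane cubic is compatibly split in $\P^2$ (already by the splitting $\sigma^{p-1}$ for its own anticanonical section, for $p\ge 3$), yet it fails to be normal and hence does not have rational singularities. The reduction-mod-$p$ argument cannot run on the splitting alone: in addition to pushing the splitting down $\pi$ one must produce a resolution of $\pi(R)$ --- in the cited references this is done via Bott--Samelson-type towers --- and verify that the splitting is compatible with the exceptional locus, so that the Mehta--van~der~Kallen / Brion--Kumar vanishing theorems apply to the resolution. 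Without building that resolution and checking compatibility, you have no grounds for asserting rational singularities of $\pi(R)$, and Theorem~\ref{T:push} cannot yet be invoked. The fiber analysis in your third step is also not a one-liner (the general fiber is indeed of Richardson type in $Q/P$, but this requires work in the spirit of Billey--Coskun); the paper sidesteps both issues by citing the literature rather than re-deriving it.
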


\begin{proof} The fact that $R$ has rational singularities is proved in \cite[\S 1]{brion:Kpos}. The other assertions are proved in \cite{knutson-lam-speyer:projections} or \cite{billey.coskun:richardson}.\end{proof}

\subsection{K-theoretic Gromov-Witten invariants}\label{ss:KGW} We continue to use $X$ for $G/P$. The homology group $H_2(X; \Z)$ is isomorphic to $\oplus_{\alpha_i \in \Delta} \Z \alpha_i^\vee / \oplus_{\beta_j \in \Delta_P} \Z \beta_j^\vee $. A {\em degree} is an effective element $d \in H_2(X; \Z)$ and it can be written as a non-negative combination of simple coroots $d= \sum_{\alpha_j \in \Delta \setminus \Delta_P} n_j \alpha_j^\vee$. The degree $\alpha_k^\vee$ will be denoted by $\ek$. For the degree $d \in H_2(X)$ denote by $\Mb_{0, n}(X,d)$ the moduli space of (genus $0$) stable maps to $X$, which compactifies the space of rational curves of degree $d$ in $X$ with $n$ marked points. We list below some of the well-known properties of this moduli space - we refer to \cite{fulton.pandharipande} for details. We remark that in this paper we will only consider the case when $d = \ek$, and that in this special case most of these properties can be easily derived from an alternate description of the moduli space given in the next section. We leave this derivation to the interested reader and instead point to the relevant references for the general properties.

The elements of $\Mb_{0,n}(X,d)$ are equivalence classes of morphisms $f:(C; pt_1, ..., pt_n) \to X$ where $C= \bigcup C_i$ is a tree of $\P^1$'s containing the points $pt_k \in C$ which are smooth points of $C$, $f_*[C] = d$, and $f$ is stable, i.e. each component $C_i$ of $C$ such that $f(C) = pt$ contains at least three markings; a marking is either a marked point $pt_j $ or a point of intersection of two components. Corresponding to the marked points there are evaluation maps $\EV= (\ev_1, ..., \ev_n):\Mb_{0,n}(X,d) \to X^n$ sending $f:(C; pt_1, ..., pt_n) \to X$ to $(f(pt_1), ..., f(pt_n))$. The $G$-action on $X$ extends to one on the moduli space by $(g \cdot f)(x) = g \cdot f(x)$. Since the moduli space is irreducible \cite{thomsen:irreducibility} it follows that $\ev_i$ is flat. The moduli space is rational \cite{kim.pandharipande} and it has finite quotient singularities \cite{fulton.pandharipande}, therefore rational singularities.


Let $\Omega_i$, $1 \le i \le 3$ be three $T$-stable subvarieties of $X$. The ($3$-point, genus $0$, equivariant) {\em K-theoretic Gromov-Witten invariant} (KGW) is the sheaf Euler characteristic \[ \gw{[\cO_{\Omega_1}], [\cO_{\Omega_2}], [\cO_{\Omega_3}]}{d} = \euler{\Mb_{0,3}(X,d)}(\EV^*([\cO_{\Omega_1}] \times [\cO_{\Omega_2}] \times [\cO_{\Omega_3}] ))\/. \] If $\sum_{i=1}^{3} \codim_X \Omega_i = \dim \Mb_{0,3}(X,d)$ this is the ordinary Gromov-Witten invariant; the K-theoretic version was defined by Givental \cite{givental:onwdvv}.

Let $u,v \in W^P$. We will heavily use the ($2$-point) {\em Gromov-Witten varieties} \[ GW_d(u,v) = \ev_1^{-1} X(u) \cap \ev_2^{-1} Y(v) \subset \Mb_{0,3}(X,d) \/. \] If non-empty, the variety $GW_d(u,v)$ irreducible, unirational and it has rational singularities \cite[\S 3]{buch.chaput.ea:finiteness}. Denote also by \[ \Gamma_d(u,v) = \ev_3(GW_d(u,v)) \subset X \] the {\em projected Gromov-Witten variety}. The interest in these varieties comes from the fact that for any $[\cF] \in K_T(X)$, \begin{equation}\label{E:2GW} \begin{split} \gw{\cO_u, \cO^v, [\cF]}{d} = \int_{\Mb_{0,3}(X,d)} (\ev_1 \times \ev_2)^* (\cO_u \times \cO^v) \cdot \ev_3^* [\cF] = & \\ \int_{\Mb_{0,3}(X,d)} [\cO_{GW_d(u,v)}] \cdot \ev_3^*[\cF] = \int_X (\ev_3)_*[\cO_{GW_d(u,v)}] \cdot [\cF] \/; \end{split}\end{equation} here we used the projection formula and the fact that $(\ev_1 \times \ev_2)^* (\cO_u \times \cO^v) = [\cO_{GW_d(u,v)}] $. The latter follows from Sierra's K-theoretic version of Kleiman transversality Theorem \cite{sierra} and it actually holds for $n$-point GW varieties - the details of the proof are in \cite[\S 4.1]{buch.m:qk}. This computation shows that one can reduce the computation of a KGW invariant to a computation in the K-theory of $X$, provided that one can compute explicitly the push-forward $(\ev_3)_*[\cO_{GW_d(u,v)}]$. This will be done in Thm. \ref{T:main} below.


\section{Lines in $G/P$}\label{ss:lines} The goal of this section is to introduce the variety of lines $L_k(X)$ of a fixed degree $\ek= \alpha_k^\vee \in H_2(X;\Z)$ in a homogenous space $X=G/P$. These varieties are characterized by Strickland in \cite{strickland:lines} and Landsberg and Manivel in \cite{landsberg.manivel:projective} and they can be divided in two categories. In the first category, which corresponds to most parabolic subgroups $P$, are included all homogeneous spaces $X$ so that $Z_k(X)$ is an auxiliary homogeneous space $G/Q$. But it is not always the case that $Z_k(X)$ is homogeneous, and Strickland performs a case-by-case study to identify explicitly the remaining varieties $Z_k(X)$. Our paper is concerned with the ``regular" homogeneous spaces from the first category.

Let $L$ be a very ample line bundle on $X$. This is determined by a weight $\lambda = \sum_i n_i \omega_i$, where $n_i=(\lambda,\alpha_i^\vee) > 0$ for all $\alpha_i \in \Delta \setminus \Delta_P$.  A {\em line} is a subvariety $\ell \subset X$ such that its image $\iota(\ell)$ under the embedding $\iota: X \subset \P(H^0(X, L)^*) := \P(V)$ is a line in $\P(V)$. Strickland shows that $\P(V)$ contains a line in $X$ if the weight $\lambda$ satisfies $(\lambda,\alpha_k^\vee) =1$ for $\alpha_k \in \Delta \setminus \Delta_P$. Moreover, such a line $\ell$ has homology class $[\ell]= \alpha_k^\vee \in H_2(X; \Z)$. Consider now $L(X)$ - the Fano variety of lines in $\P(V)$ included in $X$. This is a projective subvariety of $X$, and there is a locally constant morphism $e: L(X) \to H_2(X;\Z)$ defined by sending $\ell$ to its fundamental class $[\ell]$. Then define $L_k(X) := e^{-1} (\alpha_k^\vee)$ as a closed subset of $X$, considered as a scheme with its reduced structure. The variety $L_k(X)$ is independent of the choice of $\lambda$, and its points are complex curves in $X$ of fixed homology class $\ek$; see {\em loc.~cit.} for complete details. Define the incidence variety $Z_k(X) = \{ (x, \ell) \in X \times L_k(X): x \in \ell \}$.

\begin{defn}\label{defn:P} Consider the set $\mathcal{P}$ of pairs $(P, \alpha_k)$ of a parabolic group $P$ and a root $\alpha_k \in \Delta \setminus \Delta_P$ such that: \begin{itemize} \item either $\alpha_k$ is a long root, or \item the connected component containing $\alpha_k$ in the Dynkin diagram of $G$ consisting of roots in $\Delta_P \cup \{ \alpha_k\}$ is simply laced. \end{itemize} \end{defn}

Obviously, the set $\mathcal{P}$ contains all pairs $(P,\alpha_k)$ for simply laced groups $G$, and also all pairs where $P=B$ is a Borel subgroup (since $\Delta_B = \emptyset$). It also contains pairs $(P, \alpha_k)$ where $P$ is $k$-free, or when $P$ is a maximal parabolic subgroup, and $\alpha_k$ a cominuscule root (i.e. $\alpha_k$ appears with coefficient $1$ in the expansion of the highest root in $R^+$).

Fix $(P, \alpha_k) \in \mathcal{P}$ and define two parabolic subgroups as follows: $P_k \subset P$ is the parabolic subgroup determined by \[ \Delta_{P_k} := \Delta_P \setminus \{\alpha_i: (\alpha_i, \alpha_k^\vee) \neq 0 \} \/.\] In other words, to obtain $\Delta_{P_k}$ we remove from $\Delta_P$ the roots in the Dynkin diagram of $G$ which are adjacent to $\alpha_k$. Clearly $P_k$ is $k$-free in the sense of \S \ref{ss:Lie}, and if $P$ is already $k$-free then $P_k = P$. The second group, denoted $P(k)$, is defined by $\Delta_{P(k)} = \Delta_{P_k} \cup \{\alpha_k\}$. Recall the classification results in \cite{strickland:lines, landsberg.manivel:projective}:

\begin{thm}[\cite{strickland:lines}, Thm.~1,\cite{landsberg.manivel:projective} Thm. 4.3]\label{thm:strickland} Let $(P,\alpha_k) \in \mathcal{P}$. Then:\begin{enumerate}

\item There are natural isomorphisms $L_k(G/P) \simeq G/P(k)$ and $Z_k(G/P) \simeq G/P_k$.


\item  The previous isomorphisms are compatible with the natural projections, i.e there is a commutative diagram

\[ \xymatrix{Z_k(G/P) \ar[r]^\simeq \ar[d]^{pr_2} & G/P_k  \ar[d]^\pi \\ L_k(G/P) \ar[r]^\simeq & G/P(k) } \]


\end{enumerate}

\end{thm}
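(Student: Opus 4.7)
The plan is to realize both $L_k(G/P)$ and $Z_k(G/P)$ as homogeneous $G$-spaces by an orbit--stabilizer argument anchored at a single explicit line through the base point $eP \in G/P$.

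First, I would construct a \emph{standard line} $\ell_0 \subset G/P$ of class $\ek$. The natural candidate is $\ell_0 = \overline{U_{-\alpha_k}\cdot eP}$, the closure of the orbit of the base point under the root subgroup $U_{-\alpha_k}$; equivalently $\ell_0 = P_{\{k\}}/(P_{\{k\}}\cap P)$, where $P_{\{k\}} \supset B$ is the minimal parabolic corresponding to $\alpha_k$. It is immediate from the Bruhat decomposition that $\ell_0 \cong \bP^1$, that it passes through $eP$, and that its fundamental class in $H_2(G/P;\Z)$ is $\ek$.

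Second, I would identify the two relevant stabilizers. For the incidence stabilizer $\mathrm{Stab}_G(eP,\ell_0)$, any element must lie in $P$ (to fix $eP$) and must preserve $\ell_0$, hence must also preserve its tangent direction at $eP$, i.e.\ the line $\mathfrak{g}_{-\alpha_k} \subset T_{eP}(G/P)$. Because $\Delta_{P_k}$ consists of simple roots not adjacent to $\alpha_k$ in the Dynkin diagram, each $\alpha \in \Delta_{P_k}$ satisfies $(\alpha,\alpha_k^\vee)=0$, so the Weyl group $W_{P_k}$ fixes $\alpha_k$; the Levi of $P_k$ therefore preserves $\mathfrak{g}_{-\alpha_k}$, and a short root-subgroup computation shows that all of $P_k$ fixes the pair $(eP,\ell_0)$. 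Conversely any element of $P$ that preserves the tangent direction $\mathfrak{g}_{-\alpha_k}$ must lie in the parabolic with Levi roots orthogonal to $\alpha_k$, namely $P_k$. For $\mathrm{Stab}_G(\ell_0)$, one adds the reflection $s_k$, which swaps the two $T$-fixed points $eP$ and $s_kP$ of $\ell_0$; this upgrades $P_k$ to the parabolic with Levi roots $\Delta_{P_k}\cup\{\alpha_k\}$, i.e.\ precisely $P(k)$. Equivalently, $P \cap P(k) = P_k$ because $\Delta_P \cap \Delta_{P(k)} = \Delta_{P_k}$ (as $\alpha_k \notin \Delta_P$).

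Third, I would establish \emph{transitivity}: $G$ acts transitively on both $L_k(G/P)$ and $Z_k(G/P)$. Granting transitivity, the orbit--stabilizer theorem yields the $G$-equivariant isomorphisms $Z_k(G/P) \simeq G/P_k$ and $L_k(G/P) \simeq G/P(k)$, and the compatibility with the forgetful map $Z_k(G/P) \to L_k(G/P)$ is then tautological from the inclusion $P_k \subset P(k)$. Transitivity on $Z_k(G/P)$ reduces to showing that every line through $eP$ of class $\ek$ is an $L_P$-translate of $\ell_0$, where $L_P$ is the Levi of $P$; equivalently, the set of tangent directions at $eP$ to lines of class $\ek$ through $eP$ forms a single $L_P$-orbit inside $\bP(T_{eP}(G/P))$. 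One analyzes this locally: a line of class $\ek$ through $eP$ is determined by a weight vector in $T_{eP}(G/P) = \bigoplus_{\alpha \in R^+ \setminus R^+_P} \mathfrak{g}_{-\alpha}$ whose weight $-\alpha$ satisfies $(\omega_j,\alpha) = \delta_{jk}$ for every $\alpha_j \in \Delta \setminus \Delta_P$, where $\omega_j$ is the fundamental weight. Under the hypothesis $(P,\alpha_k) \in \mathcal{P}$, a case check on root systems shows that the set of such weights is a single $W_P$-orbit (and in fact a single $L_P$-orbit on weight vectors up to scalar): this is where the ``$\alpha_k$ long, or the component of $\alpha_k$ in $\Delta_P \cup \{\alpha_k\}$ simply laced'' hypothesis enters crucially, ruling out the short-root exceptions where additional $W_P$-orbits of tangent directions appear.

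The main obstacle is this last step. Computing stabilizers is essentially formal once the standard line is identified, but the transitivity of $G$ on $L_k(G/P)$ is exactly the content of the Strickland--Landsberg--Manivel classification and requires a careful case-by-case analysis of the Levi action on the weights of $T_{eP}(G/P)$; the two bullet conditions defining $\mathcal{P}$ are precisely what make this orbit analysis succeed uniformly.
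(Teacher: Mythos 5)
The paper does not give its own proof of this theorem: it is cited from Strickland and Landsberg--Manivel, and the paper only summarizes their strategy in one sentence (show $G$ acts transitively on $L_k(G/P)$ and $Z_k(G/P)$, then identify the stabilizer of a $T$-fixed point). Your proposal is that strategy fleshed out, and the parts you actually carry out are correct: the standard line is $\ell_0 = X(s_k) = \overline{U_{-\alpha_k}\cdot eP}$, and the stabilizer identifications $\mathrm{Stab}_G(\ell_0) = P(k)$ and $\mathrm{Stab}_G(eP,\ell_0) = P\cap P(k) = P_k$ are right. (A quick way to nail the first one: $\mathrm{Stab}_G(\ell_0)$ contains $B$, hence is a standard parabolic, and $s_i$ preserves $\ell_0$ precisely when $\alpha_i = \alpha_k$, or $\alpha_i \in \Delta_P$ with $(\alpha_i,\alpha_k^\vee)=0$ --- otherwise $s_i$ moves a $T$-fixed point of $\ell_0$ off $\ell_0$.)

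The genuine gap, which you yourself flag, is transitivity of $G$ on $L_k(G/P)$: this is the actual content of the cited theorems, and you correctly observe that it requires a case analysis in which the two bullet hypotheses defining $\mathcal{P}$ are used. Two things you would have to make precise there: first, the degree condition on a tangent weight should read $(\omega_j,\alpha^\vee)=\delta_{jk}$ for $\alpha_j\in\Delta\setminus\Delta_P$, not $(\omega_j,\alpha)$; second, and more substantively, not every $\alpha\in R^+\setminus R_P^+$ satisfying that degree condition has its $T$-stable curve through $eP$ embed as a \emph{line} in $\bP(V)$ --- the curve has $L_\lambda$-degree $(\lambda,\alpha^\vee)$ and you need that to be $1$, which is exactly where the long-root/simply-laced restriction on $(P,\alpha_k)$ enters. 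Also, the reduction to $T$-stable lines (so that ``tangent weight'' even makes sense) should be stated explicitly via Borel's fixed point theorem applied to the $T$-variety of lines through $eP$.

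Worth knowing: the paper sketches, in the remark after Corollary 3.5, an alternative and more self-contained route to this theorem that avoids the Levi-orbit case analysis entirely. One first proves the $k$-free case (where uniqueness of the line through a point, Corollary 3.4, gives $\Mb_{0,1}(G/P_k,\ek)\simeq G/P_k$ directly), then uses the curve-neighborhood computation of \cite{buch.m:nbhd} to show the induced $G$-equivariant map $\Pi:\Mb_{0,1}(G/P_k,\ek)\to\Mb_{0,1}(G/P,\ek)$ is birational, and concludes transitivity (hence an isomorphism) from $G$-equivariance.
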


The proof of the Theorem uses the natural $G$ action on $L_k(G/P)$ and $Z_k(G/P)$. One first proves that $G$ acts transitively on both varieties, then identifies the stabilizer of a $T$-fixed point in both. The moduli spaces $\Mb_{0,0}(G/P, \ek)$ and $\Mb_{0,1}(G/P, \ek)$ also admit a natural $G$-action and the moduli points can be identified with lines, respectively pointed lines in $G/P$. (For example, if $f:C \to X$ is a point in $\Mb_{0,0}(X,\ek)$, the stability condition implies that $C \simeq \P^1$, and the equivalence class of $f$
corresponds to reparametrizations.) This shows: \begin{cor}\label{cor:identification} There are natural isomorphisms $\Mb_{0,0}(G/P, \ek) \simeq G/P(k)$ and $\Mb_{0,1}(G/P, \ek) \simeq G/P_k$, and a commutative diagram as in Thm. \ref{thm:strickland} obtained by replacing the varieties $L_k(G/P)$ and $Z_k(G/P)$ by the appropriate moduli spaces; the evaluation map $\ev_1$ corresponds to $pr_2$. \end{cor}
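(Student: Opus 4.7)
The plan is to construct natural $G$-equivariant morphisms
$$\phi_0 : \Mb_{0,0}(G/P, \ek) \to L_k(G/P), \qquad \phi_1 : \Mb_{0,1}(G/P, \ek) \to Z_k(G/P),$$
show that they are isomorphisms, and then derive the diagram from Theorem~\ref{thm:strickland}.

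The first step is to verify that every stable map $[f : C \to G/P]$ of degree $\ek$ with $n \leq 1$ marked points has irreducible domain $C \simeq \P^1$. Since $\ek = \alpha_k^\vee$ is a generator of the semigroup of effective classes in $H_2(G/P; \Z)$, at most one component of $C$ can carry nonzero degree, which must then equal $\ek$; the remaining components are contracted. A short combinatorial check on the dual tree -- if there were $m \geq 1$ contracted components attached to the non-contracted one, stability would demand at least $3m$ special points on them, while only $\leq 2m - 1 + n$ are available -- rules out any contracted components. Hence $C \simeq \P^1$, and the primitivity of $\ek$ forces $f$ to be birational onto its image, which is a line of class $\ek$.

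I would then define $\phi_0([f]) = f(C)$ and $\phi_1([f; pt]) = (f(pt), f(C))$. These assignments extend to morphisms of schemes via the universal stable map: the evaluation $\ev_1 : \Mb_{0,1}(G/P, \ek) \to G/P$ together with the forgetful map to $\Mb_{0,0}(G/P, \ek)$ yields a family of lines in $G/P$ parametrized by $\Mb_{0,0}(G/P, \ek)$, and the universal property of $L_k(G/P)$ produces $\phi_0$; then $\phi_1 = (\ev_1, \phi_0 \circ \text{forget})$. Both are $G$-equivariant by construction and bijective on closed points: surjectivity is clear because every (pointed) line of class $\ek$ admits a tautological stable-map parametrization, while injectivity follows because two parametrizations of a line $\ell \simeq \P^1$ differ by an element of $\Aut(\P^1)$ and thus define the same moduli point. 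Since by Theorem~\ref{thm:strickland} the targets $L_k(G/P) \simeq G/P(k)$ and $Z_k(G/P) \simeq G/P_k$ are smooth and $G$-homogeneous, while the sources are irreducible projective (by \cite{thomsen:irreducibility}) with at worst finite quotient singularities and hence normal, a $G$-equivariant bijective morphism between them must be an isomorphism in characteristic zero -- whether by appealing to Zariski's Main Theorem, or by noting that $G$-equivariance plus bijectivity on closed points forces the stabilizers in $G$ of corresponding points to coincide, identifying source and target as the same homogeneous space.

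Finally, the commutativity of the diagram is tautological from the construction: under $\phi_0$ and $\phi_1$, the forgetful morphism $\Mb_{0,1}(G/P, \ek) \to \Mb_{0,0}(G/P, \ek)$ matches the projection $pr_2 : Z_k(G/P) \to L_k(G/P)$ which in turn corresponds to $\pi : G/P_k \to G/P(k)$, and $\ev_1$ matches the projection of $Z_k(G/P)$ recording the marked point in $G/P$. The main obstacle is the upgrade from a bijection on closed points to a scheme-theoretic isomorphism, which depends crucially on normality of the moduli spaces and on working in characteristic zero; everything else reduces to bookkeeping with the $G$-action and the universal stable map.
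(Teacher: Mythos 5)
Your proposal is correct and follows essentially the same route as the paper's own argument: identify moduli points with lines (resp. pointed lines) — the stability count showing $C \simeq \P^1$ is exactly what the paper calls "the stability condition implies $C \simeq \P^1$" — and then use the $G$-equivariance together with Strickland's identification of $L_k(G/P)$ and $Z_k(G/P)$ to promote the bijection to an isomorphism. You are slightly more careful about the scheme-theoretic upgrade (Zariski's Main Theorem / stabilizer comparison), which the paper leaves implicit; and you correctly record that it is the forgetful map, not $\ev_1$, which matches $pr_2$, and that $\ev_1$ matches the projection to $G/P$ (the paper's phrasing of this point is garbled).
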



\section{(Projected) Gromov-Witten varieties and cohomological trivial maps}

The goal of this section is to prove the technical results needed to make explicit calculations of KGW invariants. We first introduce some notations and the general setup. Let $(P,\alpha_k) \in \mathcal{P}$ and $Q$ a $k$-free parabolic group satisfying $B \subset Q \subset P$. The natural projection $\pi:G/Q \to G/P$ induces a map $\Pi:\Mb_{0,3}(G/Q, \ek) \to \Mb_{0,3}(G/P, \ek)$ by $\Pi(f) = \pi\circ f$. Fix $u, v \in W^P$ and let $\widehat{u} \in W^Q$ be defined by $\pi^{-1}(X(u)) = X(\widehat{u})$. Clearly $\Pi(GW_{\ek}(\widehat{u},v)) \subset GW_{\ek}(u,v)$ and abusing notation we denote the restricted map again with $\Pi$. There is a commutative diagram:
 \begin{equation}\label{E:diagram} \xymatrix{ GW_{\ek}(\widehat{u},v) \subset \Mb_{0,3}(G/Q,\ek) \ar[r]^{\Pi} \ar[d]^{\ev_3} &  GW_{\ek}(u,v) \subset \Mb_{0,3}(G/P,\ek)\ar[d]^{\ev_3} \\ \Gamma_{\ek}(\widehat{u},v) \subset G/Q \ar[r]^{\pi} & \Gamma_{\ek}(u,v)\subset G/P }\end{equation} where the bottom map is the restriction of $\pi$. The main theorem of this section is:

\begin{thm}\label{T:main} Assume that $GW_{\ek}(u,v)$ is non-empty.

(a) If $P$ is $k$-free, the projected Gromov-Witten variety $\Gamma_{\ek}(u,v)$ equals the Richardson variety $X(u^k) \cap Y(v_k)$.

(b) For any $(P,\alpha_k) \in \mathcal{P}$,  $GW_{\ek}(\widehat{u},v) = \Pi^{-1} (GW_{\ek}(u,v))$.

(c) If $(P, \alpha_k) \in \mathcal{P}$ then all varieties in diagram (\ref{E:diagram}) have rational singularities, and all maps are surjective and cohomologically trivial.
\end{thm}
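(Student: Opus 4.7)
The plan is to prove the three parts in order, with (a) supplying the geometry that drives (c). For (a), I invoke Cor.~\ref{cor:identification} to identify $\Mb_{0,0}(G/P,\ek)=G/P(k)$ and $\Mb_{0,1}(G/P,\ek)=G/P$ (since $P$ is $k$-free forces $P_k=P$), so that lines of class $\ek$ are exactly the fibers of $\pi_k\colon G/P\to G/P(k)$, and a unique such line passes through each point of $G/P$. Because $P$ is $k$-free, $s_k$ commutes with every element of $W_P$ and $W_{P(k)}=W_P\sqcup W_Ps_k$; hence the coset $uW_{P(k)}\cap W^P=\{u_k,u^k\}$, giving $\pi_k(X(u))=X(u_k)$ in $G/P(k)$ and $\pi_k^{-1}\pi_k(X(u))=X(u^k)$ in $G/P$. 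The latter is precisely the union of lines of class $\ek$ meeting $X(u)$, and symmetrically $Y(v_k)$ for $Y(v)$; so $\Gamma_{\ek}(u,v)\subseteq X(u^k)\cap Y(v_k)$. For the reverse inclusion, given $x\in X(u^k)\cap Y(v_k)$ the unique line $\ell$ through $x$ meets both $X(u)$ and $Y(v)$, and choosing markings $p_1\in\ell\cap X(u)$, $p_2\in\ell\cap Y(v)$, $p_3=x$ produces an element of $GW_\ek(u,v)$ with $\ev_3=x$.

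Part (b) is a direct diagram chase using $\ev_i^P\circ\Pi=\pi\circ\ev_i^Q$ together with $\pi^{-1}X(u)=X(\widehat u)$ and $\pi^{-1}Y(v)=Y(v)$ (the latter because $W^P\subset W^Q$):
\[ \Pi^{-1}(GW_\ek(u,v))=(\ev_1^Q)^{-1}X(\widehat u)\cap(\ev_2^Q)^{-1}Y(v)=GW_\ek(\widehat u,v). \]

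For (c), I first collect the rational-singularity and surjectivity statements. The two $GW$-varieties are irreducible with rational singularities by \cite{buch.chaput.ea:finiteness}; applying (a) to the $k$-free $Q$ identifies $\Gamma_\ek(\widehat u,v)$ with the Richardson variety $X(\widehat u^k)\cap Y(v_k)\subset G/Q$, so it has rational singularities, and Lemma \ref{lemma:projrich} shows that $\Gamma_\ek(u,v)=\pi(\Gamma_\ek(\widehat u,v))$ also has rational singularities and that the bottom horizontal arrow is cohomologically trivial. Surjectivity of the ambient $\Pi\colon\Mb_{0,3}(G/Q,\ek)\to\Mb_{0,3}(G/P,\ek)$ reduces via Cor.~\ref{cor:identification} to surjectivity of $G/Q(k)\to G/P(k)$, immediate from $Q(k)\subset P(k)$; with (b) this forces $\Pi$ in (\ref{E:diagram}) to be surjective, and the $\ev_3$'s are surjective by construction.

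The remaining task, cohomological triviality of $\ev_3^Q$, $\Pi$, and $\ev_3^P$, is the main obstacle, and I will apply Thm.~\ref{T:push}, which amounts to verifying rational general fibers. For $\ev_3^Q\colon GW_\ek(\widehat u,v)\to X(\widehat u^k)\cap Y(v_k)$, a generic $x$ has a unique line $\ell$ through it, and since three marked points rigidify the parametrization a stable map in the fiber is determined by the pair $(p_1,p_2)\in(\ell\cap X(\widehat u))\times(\ell\cap Y(v))$; a case analysis on whether $\widehat u^k=\widehat u$ (equivalently $\ell\subset X(\widehat u)$, since $X(\widehat u)$ is then saturated under $\pi_k$) and the analogous alternative for $v$ shows the generic fiber is a point, $\P^1$, or $\P^1\times\P^1$, all rational. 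For $\Pi\colon GW_\ek(\widehat u,v)\to GW_\ek(u,v)$, part (b) gives that the fiber coincides with the ambient moduli fiber, which via Cor.~\ref{cor:identification} is the flag variety $P(k)/Q(k)$, again rational. Finally $\ev_3^P$ is handled by Leray: $\pi\circ\ev_3^Q=\ev_3^P\circ\Pi$ is cohomologically trivial as a composition of cohomologically trivial maps, so the Leray spectral sequence for $\ev_3^P\circ\Pi$ collapses (since $R^i\Pi_*\cO=0$ for $i>0$ and $\Pi_*\cO=\cO$) and yields $R^i(\ev_3^P)_*\cO=0$ for $i>0$ together with $(\ev_3^P)_*\cO=\cO_{\Gamma_\ek(u,v)}$.
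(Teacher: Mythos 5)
Your overall architecture matches the paper's: part (a) via the fibration $\pi_k$, part (b) by a diagram chase, part (c) by reducing to Thm.~\ref{T:push} after establishing rational singularities, surjectivity, and rationality of general fibers, plus a Grothendieck--Leray argument to transfer cohomological triviality to the right vertical map. Your (a) is in fact a slightly more direct route than the paper's Prop.~\ref{prop:kfreelines}, which computes the curve neighborhood through a $T$-fixed-point analysis; your combinatorial identification $uW_{P(k)}\cap W^P=\{u_k,u^k\}$ (using $W_{P(k)}=W_P\times\langle s_k\rangle$) and the observation that lines are exactly the $\pi_k$-fibers yields the same conclusion. Your (b) is clean and, as you implicitly show, does not actually need surjectivity of $\Pi$.

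There are, however, two places in (c) where a genuinely nontrivial step is compressed into a remark. First, your claim that surjectivity of $\Pi$ ``reduces via Cor.~\ref{cor:identification} to surjectivity of $G/Q(k)\to G/P(k)$'' and that the $\Pi$-fiber is $P(k)/Q(k)$ is the right picture, but it uses that a stable map with three \emph{distinct} marked points of degree $\ek$ on $\P^1$ is rigid, so that the forgetful map restricted to a $\Pi$-fiber over a general point is a bijection onto a fiber of $\Pi'\colon \Mb_{0,1}(G/Q,\ek)\simeq G/Q\to \Mb_{0,1}(G/P,\ek)\simeq G/P_k$; this is precisely the content and proof of Thm.~\ref{T:surj} in the paper, and you should state it rather than assert the reduction. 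Second, your description of the generic fiber of $\ev_3^Q$ as a point, $\P^1$, or $\P^1\times\P^1$ presupposes that $\ell\cap X(\widehat u)$ is a \emph{single reduced point} when $\widehat u^k\neq \widehat u$; a priori a general fiber of a line pencil through $X(\widehat u)$ could be several points or a nonreduced point, and then the fiber would fail to be an irreducible variety, so Thm.~\ref{T:push} would not apply. The paper handles this through Lemma~\ref{lemma:intersection}, which uses rational singularities of $\ev_1^{-1}X(\widehat u)$ together with $B$-equivariance and one-dimensionality of fibers to force smoothness, hence a reduced point. You also describe the third case as fiber $\P^1\times\P^1$; it is only birational to $\ell\times\ell$ (it is a compactification of it inside $\Mb_{0,3}(\ell,1)$), which still gives rationality but needs the irreducibility input from \cite[Prop.~3.2]{buch.chaput.ea:finiteness}. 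Apart from these omissions, the proof is sound and follows the paper's line of argument.
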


In the situation when $Q=B$, the claimed properties of $\Pi$ can also be derived from more general results of Woodward \cite{woodward:peterson} proving Peterson's comparison formula. The idea is that for homogeneous spaces $G/P$ where $(P,\alpha_k) \in \mathcal{P}$, the Peterson lift of the degree $\ek \in H_2(G/P)$ remains $\ek \in H_2(G/B)$. Our proof is different and elementary, and it relies on the geometry of spaces of lines in flag manifolds. The key part for calculations of KGW invariants is the cohomological triviality of $\ev_3$.
The proof of the theorem is divided into two main cases, for $P$ being $k$- free and $(P, \alpha_k) \in \mathcal{P}$, but not necessarily $k$-free.

\subsection{The case when $P$ is $k$-free}\label{ss:kfree} The key property satisfied by homogeneous spaces $X=G/P$ when $P$ is $k$-free is that $\Mb_{0,1}(G/P, \ek) \simeq G/P$. Although this follows from Cor. \ref{cor:identification}, we will reprove it here, in a different way. The ingredients in our proof will be used repeatedly throughout the paper. The main observation is that through any point $x \in G/P$, there exists a unique line $\ell \ni x$ of degree $\ek$. We first introduce some necessary definitions.

The {\em curve neighborhood} $\Gamma_d(\Omega)$ of a subvariety $\Omega \subset X$ is the locus of points $x \in X$ so that there exists a rational curve $C$ of degree $d$ so that $x \in C$ and $C \cap \Omega \neq \emptyset$. The set $\Gamma_d(\Omega)$ can also be realized as $\ev_2(\ev_1^{-1} \Omega)$, and this gives it a scheme structure. It follows that if $\Omega$ is a $B$-stable variety then so is its curve neighborhood, so it must be a union of $B$-stable Schubert varieties. In fact, if $\Omega$ is a $B$-stable Schubert variety, then so is $\Gamma_d(\Omega)$ \cite[Prop. 3.2]{buch.chaput.ea:finiteness}. The Weyl group element corresponding to this variety was identified in \cite{buch.m:nbhd}. For the convenience of the reader, we include this proof in the case when $P$ is $k$-free and $d= \ek$, when the arguments are simpler. From now on in this section $P$ is a $k$-free parabolic subgroup.

\begin{prop}\label{prop:kfreelines} Let $P$ be a $k$-free parabolic subgroup, and $u \in W^P$. Then $\Gamma_{\ek}(X(u)) = X(u^k)$ and $\Gamma_{\ek}(Y(u)) = Y(u_k)$. \end{prop}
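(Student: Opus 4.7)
The plan is to identify each degree $\ek$ line in $G/P$ with a fiber of the natural $G$-equivariant projection $\pi\colon G/P \to G/P(k)$, where $P(k) \supset P$ is the parabolic subgroup with $\Delta_{P(k)} = \Delta_P \cup \{\alpha_k\}$. The $k$-freeness of $P$ enters essentially here: since $s_k$ commutes with every reflection in $W_P$, one has $W_{P(k)} = W_P \sqcup s_k W_P$, so each fiber $P(k)/P$ is a single $\P^1$. Combining Thm.~\ref{thm:strickland} and Cor.~\ref{cor:identification} with the equality $P_k = P$ (which holds for $k$-free $P$), the moduli space $Z_k(G/P) \simeq G/P_k$ becomes $G/P$ itself, under an identification in which $pr_1$ is the identity of $G/P$ and $pr_2$ is $\pi$. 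Consequently, through every point of $G/P$ passes a unique degree $\ek$ line, namely the $\pi$-fiber, and for any subvariety $\Omega \subset G/P$ one has $\Gamma_{\ek}(\Omega) = \pi^{-1}(\pi(\Omega))$.

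Next I would compute $\pi(X(u))$ and $\pi(Y(u))$. Since $\pi$ is both $B$- and $B^-$-equivariant, both images are Schubert varieties in $G/P(k)$, indexed by the minimal length representative $\bar u$ of the coset $uW_{P(k)}$ in $W^{P(k)}$. Using the decomposition $W_{P(k)} = W_P \sqcup s_k W_P$ together with Lemma~\ref{lemma:minrep} (which guarantees $us_k \in W^P$), the intersection $uW_{P(k)} \cap W^P$ equals $\{u, us_k\}$, and exactly one of these two elements lies in $W^{P(k)}$, namely the shorter one, which is $u_k$ by definition. Hence $\pi(X(u)) = X'(u_k)$ and $\pi(Y(u)) = Y'(u_k)$, where $X', Y'$ denote Schubert varieties in $G/P(k)$ from the two opposite Borels.

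Finally I would pull back along the $\P^1$-bundle $\pi$. The variety $\pi^{-1}(X'(u_k))$ is $B$-stable, irreducible, of dimension $\ell(u_k) + 1 = \ell(u^k)$, and contains $X(u) \subset X(u^k)$; a dimension count forces $\pi^{-1}(X'(u_k)) = X(u^k)$, yielding the first assertion. Analogously, $\pi^{-1}(Y'(u_k))$ is $B^-$-stable, irreducible, of codimension $\ell(u_k)$, and contains $Y(u_k)$, hence equals $Y(u_k)$, proving the second. The main technical content lies in identifying degree $\ek$ lines with $\pi$-fibers via the $k$-free hypothesis; once that step is secured, the rest is routine bookkeeping about projections of Schubert varieties under the $\P^1$-bundle $\pi$.
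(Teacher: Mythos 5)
Your proof is correct, but it takes a genuinely different route from the paper's. The paper proves this proposition directly, without appealing to the Strickland / Landsberg--Manivel classification: it shows that the (nonempty) fibre $\ev_3^{-1}(e_v)$ contains a $T$-fixed point because $T$ is connected solvable, hence produces a $T$-stable line $\ell$ through $e_v$; it then invokes the standard fact that any irreducible $T$-stable curve in $G/P$ joins $e_u$ to $e_{us_\alpha}$ and has degree $\alpha^\vee$, and uses the $k$-freeness hypothesis at the level of the Dynkin diagram to force $\alpha = \alpha_k$, so the line goes from $e_u$ to $e_{us_k}$. Your argument instead takes Thm.~\ref{thm:strickland} and Cor.~\ref{cor:identification} as given, identifies the degree $\ek$ lines with the $\P^1$-fibres of $\pi \colon G/P \to G/P(k)$, and reduces the statement to bookkeeping about projections and preimages of Schubert varieties under $\pi$ (your coset computation $uW_{P(k)} \cap W^P = \{u, us_k\}$ and identification of the shorter element with $u_k$ are correct, as is the conclusion $\pi^{-1}(X'(u_k)) = X(u^k)$, which one can also see directly since $u^k = u_k s_k$ is the maximal representative of $u_k W_{P(k)}$ lying in $W^P$). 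The trade-off is explicit in the paper: the authors remark after Cor.~\ref{cor:ident} that the moduli-space identification \emph{follows} from the classification but that they ``reprove it here, in a different way'' precisely because their $T$-fixed-point technique is self-contained, is reused throughout \S 4, and can in fact be turned around to give an independent proof of Thm.~\ref{thm:strickland} itself (see the remark after Cor.~\ref{cor:ident}). Your approach is cleaner if one is willing to black-box the classification; the paper's is preferred when one wants the lemma's ingredients available for the harder arguments later (Lemma~\ref{lemma:intersection}, Thm.~\ref{T:maincohtriv}).
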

\begin{proof} We will only show the formula for $\Gamma_{\ek}(X(u))$ - the other formula is similar. It is clear that $X(u^k) \subset \Gamma_{\ek}(X(u))$.
Consider the Gromov-Witten variety $GW_{\ek}(e_v) := \ev_3^{-1} (e_v) \subset \Mb_{0,3}(X,\ek)$ where $e_v \in X(u)$ is a $T$-fixed point; this variety is non-empty because $\ev_3$ is surjective. Since $T$ is a connected solvable algebraic group acting on $GW_{\ek}(e_v)$, it follows that it contains $T$-fixed point \cite[\S 21.2]{humphreys:linalg}. This corresponds to a morphism $f:(C; pt_1, pt_2,pt_3) \to X$ and the image of this
morphism is a $T$-stable line of degree $\ek$, containing $e_v$. We denote this line with $\ell$, and notice that $\ell \subset \Gamma_{\ek}(X(v))$. Further, because $X(v) \subset X(u)$ we have that $\Gamma_{\ek}(X(v)) \subset \Gamma_{\ek}(X(u))$ so we can assume that $v=u$.

It is well-known (cf.~e.g.~\cite[\S 2]{carrell.kuttler:onTvars} or \cite{fulton.woodward:on}) that any irreducible $T-$stable curve $C$ in $X$ contains exactly two $T$-fixed points of the form $e_{uW_P}$ and $e_{us_\alpha W_P}$, where $\alpha \in R^+ \setminus R_P^+$, and that this curve has degree $\alpha^\vee \in H_2(X)$. Therefore $\ell$ joins $e_u$ to $e_w$, where $wW_P = us_\alpha W_P$ for a root $\alpha \in R^+ \setminus R_P^+$ with \begin{equation}\label{E:1} \alpha^\vee  - \alpha_k^\vee \in \oplus_{\alpha_j \in \Delta_P} \Z \alpha_j^\vee \/. \end{equation}
If $\alpha \neq \alpha_k$ then $\alpha^\vee$ must contain in its decomposition at least one simple coroot $\alpha_i^\vee$ so that nodes $i$ and $k$ are adjacent in the Dynkin diagram of $G$. Since $P$ is $k$-free, $\alpha_i \in \Delta \setminus \Delta_P$, which contradicts equation (\ref{E:1}). Then $\alpha= \alpha_k$ and $w = us_k$, which implies that $\Gamma_{\ek}(X(u)) \subset X(u^k)$. This finishes the proof. \end{proof}

\begin{cor}\label{cor:kfreelines} For any $x \in X$, there exists a unique line $\ell$ of degree $\ek$ which contains $x$. In particular, this line is isomorphic to a $G$ translate of the Schubert variety $X(s_k)$.  \end{cor}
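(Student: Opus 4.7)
The plan is to combine Proposition~\ref{prop:kfreelines} with the observation that the one-dimensional Schubert variety $X(s_k)$ is itself a line of degree $\ek$, and then transfer from the basepoint to an arbitrary $x$ by $G$-homogeneity.

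First, I would apply Proposition~\ref{prop:kfreelines} to $u=\id$; since $\id^k=s_k$, this yields $\Gamma_{\ek}(\{e_{\id}\})=X(s_k)$. By the very definition of the curve neighborhood, every line $\ell$ of degree $\ek$ passing through the basepoint $e_{\id}$ is contained in $X(s_k)$.

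Next, I would verify that $X(s_k)$ is itself such a line. Indeed, $X(s_k)=\overline{Bs_kP/P}$ has dimension $\ell(s_k)=1$ and is thus isomorphic to $\P^1$; it is $T$-stable with exactly the two $T$-fixed points $e_{\id}$ and $e_{s_k}$, so by the classification of irreducible $T$-stable curves in $G/P$ recalled inside the proof of Proposition~\ref{prop:kfreelines}, its homology class equals $\alpha_k^\vee=\ek$. Since any line $\ell\ni e_{\id}$ of degree $\ek$ is one-dimensional and contained in the irreducible one-dimensional $X(s_k)$, we must have $\ell=X(s_k)$. This simultaneously gives existence and uniqueness of a line of degree $\ek$ through the basepoint.

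Finally, for a general point $x\in G/P$, I would transfer via $G$-homogeneity: choose $g\in G$ with $g\cdot e_{\id}=x$; then lines of degree $\ek$ through $x$ are in bijection with lines of degree $\ek$ through $e_{\id}$ via $\ell\mapsto g^{-1}\cdot\ell$, so the unique such line is $g\cdot X(s_k)$, which is a $G$-translate of $X(s_k)$ as claimed. The substantive geometric input has already been carried out in Proposition~\ref{prop:kfreelines}; the only mildly subtle point is the identification of $X(s_k)$ as a degree-$\ek$ curve through the $T$-stable curve classification, but this is immediate, so I do not anticipate any real obstacle.
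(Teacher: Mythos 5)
Your proof is correct and follows essentially the same route as the paper's one-line proof: reduce to the base point $e_{\id}$ by $G$-homogeneity and apply Proposition~\ref{prop:kfreelines} with $u=\id$ to get $\Gamma_{\ek}(\{e_{\id}\})=X(s_k)$, then conclude by irreducibility and dimension. Your explicit verification that $X(s_k)$ is itself a degree-$\ek$ curve (via the $T$-stable curve classification) is a nice way to make the existence part self-contained; the paper instead relies implicitly on the surjectivity of the evaluation map already invoked in the proof of Proposition~\ref{prop:kfreelines}.
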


\begin{proof} Without loss of generality, one can assume that $x=e_{id}$ is the unique $B$-fixed point in $X$. Then apply the previous proposition to $X(u) = X({id})$.\end{proof}

\begin{cor}\label{cor:ident} The evaluation map $\ev_1: \Mb_{0,1}(X,\ek) \to X$ is an isomorphism. \end{cor}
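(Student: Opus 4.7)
The plan is to combine Corollary \ref{cor:identification} with Corollary \ref{cor:kfreelines} to exhibit $\ev_1$ as a bijective morphism between smooth irreducible projective varieties of the same dimension, which forces it to be an isomorphism. First, Corollary \ref{cor:identification} gives $\Mb_{0,1}(X,\ek) \cong G/P_k$; since $P$ is $k$-free, $P_k = P$, so the source of $\ev_1$ is already abstractly isomorphic to $X = G/P$. In particular both the source and the target are smooth, irreducible, projective, and of the same dimension, and $\ev_1$ itself is $G$-equivariant.

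Next I would verify that $\ev_1$ is bijective on closed points. Surjectivity is immediate from $G$-equivariance together with transitivity of the $G$-action on $X$ (the source is nonempty because any degree $\ek$ line on $X$ with a marked point represents an element). For injectivity, a preimage of a point $x\in X$ corresponds to an equivalence class of stable maps $f:(C,pt)\to X$ of degree $\ek$ with $f(pt)=x$. Stability rules out contracted components (a single marked point is not enough to stabilize a contracted $\P^1$), so $C\cong \P^1$ and $f$ parametrizes a degree $\ek$ line $\ell \subset X$ through $x$. By Corollary \ref{cor:kfreelines} such a line is unique, and any two parametrizations $\P^1\to \ell$ sending the marked point to $x$ differ by an element of the stabilizer of $pt$ in $\Aut(\P^1)$, so they define the same point of $\Mb_{0,1}(X,\ek)$.

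Finally, a bijective morphism between smooth irreducible complex projective varieties of the same dimension is an isomorphism: it is proper and quasi-finite, hence finite, and bijectivity onto a smooth (in particular normal) target together with Zariski's Main Theorem in characteristic zero forces it to be an isomorphism. The step I expect to require the most care is the injectivity argument, specifically ensuring that the fiber-level description of $\ev_1^{-1}(x)$ really corresponds to a single equivalence class of stable maps, which reduces to uniqueness of the line through $x$ (Corollary \ref{cor:kfreelines}) and the standard fact that reparametrizations fixing the marked point are identified in $\Mb_{0,1}$. Everything else is structural and follows from the tools assembled in the previous subsection.
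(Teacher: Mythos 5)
Your argument is mathematically sound, but it relies on Corollary~\ref{cor:identification} (hence on the Strickland/Landsberg--Manivel classification, Theorem~\ref{thm:strickland}) to establish that the source is smooth and abstractly isomorphic to $G/P$, whereas the paper's proof deliberately avoids that input. The paper simply observes that Corollary~\ref{cor:kfreelines} makes $\ev_1$ bijective on points, and that \emph{both} varieties are normal --- the moduli space $\Mb_{0,1}(X,\ek)$ is normal because it has finite quotient, hence rational, singularities (recalled in \S\ref{ss:KGW}), not because it has been identified with a homogeneous space --- so Zariski's Main Theorem gives the isomorphism. That normality, rather than smoothness via the classification, is the correct minimal hypothesis here. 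This distinction is not cosmetic: the Remark immediately following the corollary points out that Cor.~\ref{cor:ident} was proved \emph{without} Theorem~\ref{thm:strickland} precisely so that it can be fed back into an alternative proof of that theorem (and of Cor.~\ref{cor:identification}). Your version would make that intended application circular. So while each individual step you write is correct --- the stability analysis ruling out contracted components, the uniqueness of the line through a point from Cor.~\ref{cor:kfreelines}, and the ZMT conclusion are all fine --- you should drop the appeal to Cor.~\ref{cor:identification} and replace ``smooth source'' by ``normal source (since $\Mb_{0,1}(X,\ek)$ has rational singularities)'' to match the logical architecture the authors set up.
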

\begin{proof} Cor. \ref{cor:kfreelines} shows that the set theoretic fibre over each point in $X$ consists of a single element. Since both varieties are normal, it follows that $\ev_1$ is an isomorphism \cite[p.~209]{mumford:redbook}. \end{proof}

\begin{remark} The proof of Cor. \ref{cor:ident} did not use the characterization Thm. \ref{thm:strickland}, and related techniques can be used to give an alternative proof of this theorem. We do not use this anywhere else in the paper thus we only sketch the idea of the proof. Consider the natural map $\Pi:\Mb_{0,1}(G/P_k, \ek) \to \Mb_{0,1}(G/P, \ek)$ induced by the projection $\pi:G/P_k \to G/P$. Using results from \cite{buch.m:nbhd} one can calculate that for $(P, \alpha_k) \in \mathcal{P}$ the curve neighborhood $\Gamma_{\ek}(\pi^{-1}(1.P)) = X(w_P w_{P_k} s_k W_{P_k})$ where $w_P,w_{P_k}$ are the longest elements in $W_P$ and $W_{P_k}$. Further, $w_P w_{P_k} s_k$ is the minimal length representative in both its $W_{P_k}$ and $W_P$ cosets. Then the restriction \[ \pi: \Gamma_{\ek}(\pi^{-1}(1.P)) = X(w_P w_{P_k} s_k W_{P_k}) \to \Gamma_{\ek}(1.P) = X(w_P w_{P_k} s_k W_P)\] is birational, and thus so is $\Pi$. But $ \Mb_{0,1}(G/P_k, \ek) \simeq G/P_k$ since $P_k$ is $k$-free. Because $\Pi$ is $G$-equivariant it follows that $G$ acts transitively on $\Mb_{0,1}(G/P, \ek)$, therefore $\Pi$ must be an isomorphism. This recovers Cor. \ref{cor:identification} and ultimately Thm.~\ref{thm:strickland}.\end{remark}

\begin{cor}\label{cor:gwimage} Let $u,v \in W^P$ for $P$ $k$-free. Then $\Gamma_{\ek}(u,v)$ equals the Richardson variety $X(u^k) \cap Y(v_k)$. \end{cor}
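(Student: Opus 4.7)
The plan is to match points of $\Gamma_{\ek}(u,v)$ with points $x \in X$ whose unique degree-$\ek$ line meets both $X(u)$ and $Y(v)$, and then to use Proposition \ref{prop:kfreelines} to translate the two intersection conditions into membership in $X(u^k)$ and $Y(v_k)$ separately.

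First I would unpack the definition: $x \in \Gamma_{\ek}(u,v)$ means there exists a stable map $f\colon (C;pt_1,pt_2,pt_3) \to X$ of class $\ek$ with $f(pt_1)\in X(u)$, $f(pt_2)\in Y(v)$, and $f(pt_3)=x$. Because $\ek$ is the class of a simple coroot, the image $f(C)$ is supported on a single line $\ell\subset X$ of degree $\ek$, with possibly some contracted subtrees attached to it; no matter how the three marked points are distributed on $C$, this forces $x\in\ell$ together with $\ell\cap X(u)\neq\emptyset$ and $\ell\cap Y(v)\neq\emptyset$. Conversely, given any degree-$\ek$ line $\ell$ meeting both $X(u)$ and $Y(v)$ and any $x\in\ell$, one can manufacture a stable map (with a contracted bubble if needed) realizing the three incidence conditions. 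Set-theoretically, therefore,
\[
\Gamma_{\ek}(u,v) = \{\,x\in X : \exists\text{ a line } \ell \text{ of degree }\ek\text{ with } x\in\ell,\ \ell\cap X(u)\neq\emptyset,\ \ell\cap Y(v)\neq\emptyset\,\}.
\]

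Next, Corollary \ref{cor:kfreelines} asserts that through each $x\in X$ there is a \emph{unique} line $\ell_x$ of degree $\ek$. Hence the two existence conditions in the display above collapse into the pair of conditions $\ell_x\cap X(u)\neq\emptyset$ and $\ell_x\cap Y(v)\neq\emptyset$, considered independently. By the definition of the curve neighborhood, these are exactly $x\in\Gamma_{\ek}(X(u))$ and $x\in\Gamma_{\ek}(Y(v))$, which Proposition \ref{prop:kfreelines} identifies with $X(u^k)$ and $Y(v_k)$ respectively. This yields the set-theoretic equality $\Gamma_{\ek}(u,v) = X(u^k)\cap Y(v_k)$; since both sides are reduced closed subvarieties of $X$, the equality holds scheme-theoretically as well.

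The only real subtlety is the use of uniqueness in the reverse inclusion: a priori, membership in $X(u^k)\cap Y(v_k)$ furnishes two degree-$\ek$ lines through $x$, one meeting $X(u)$ and one meeting $Y(v)$, and it is precisely Corollary \ref{cor:kfreelines} that forces them to coincide with $\ell_x$, enabling the reconstruction of a single stable map in $GW_{\ek}(u,v)$ with third marked point $x$. Everything else is a direct assembly of the preceding results.
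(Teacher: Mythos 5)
Your proof is correct and follows essentially the same route as the paper's: one inclusion comes from the curve-neighborhood formula $\Gamma_{\ek}(X(u)) = X(u^k)$, $\Gamma_{\ek}(Y(v)) = Y(v_k)$ of Prop.~\ref{prop:kfreelines}, and the reverse inclusion uses the uniqueness of the degree-$\ek$ line through a point (Cor.~\ref{cor:kfreelines}) to merge the two lines meeting $X(u)$ and $Y(v)$ into a single one, yielding a point of $GW_{\ek}(u,v)$. The only cosmetic difference is that you unpack the stable-map description of $GW_{\ek}(u,v)$ explicitly, whereas the paper obtains the forward inclusion directly from $\ev_3(A\cap B)\subset \ev_3(A)\cap\ev_3(B)$.
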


\begin{proof} By definition and using Prop. \ref{prop:kfreelines} \[ \Gamma_{\ek}(u,v)=\ev_3(GW_{\ek}(u,v)) \subset \ev_3(\ev_1^{-1} X(u)) \cap \ev_3(\ev_2^{-1} Y(v)) = X(u^k) \cap Y(v_k) \/. \]
We show the reverse inclusion. Let $x \in X(u^k) \cap Y(v_k) $. There exist two lines $\ell_1 \ni x$, $\ell_2 \ni x$ of degree $\ek$ so that $\ell_1 \cap X(u) \neq \emptyset$ and $\ell_2  \cap Y(v) \neq \emptyset$. But Cor. \ref{cor:kfreelines} implies that $\ell_1 = \ell_2$ and we are done.  \end{proof}

To prove that $\ev_3:GW_{\ek}(u,v) \to \Gamma_{\ek}(u,v)$ is cohomologically trivial we need one more lemma:

\begin{lemma}\label{lemma:intersection} Let $P$ be $k$-free and $\ell$ a line of degree $\ek$. Let $u \in W^P$ and assume that $\ell \cap X(u) \neq \emptyset$ and that $\ell \cap X(u^k)^o \neq \emptyset$. Then either $\ell \subset X(u)$ or the intersection $\ell \cap X(u)$ is transversal and it consists of a single point. In the first case $u^k =u$. \end{lemma}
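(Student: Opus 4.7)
I would begin by observing that $\ell \subset X(u^k)$. Pick any point $p \in \ell\cap X(u^k)^o$. By Corollary \ref{cor:kfreelines}, $\ell$ is the unique degree $\ek$ line through $p$; by Proposition \ref{prop:kfreelines}, every such line lies in $\Gamma_{\ek}(X(u^k))=X((u^k)^k)=X(u^k)$. If $u^k=u$, then $X(u^k)^o=X(u)^o$, and the same curve neighborhood argument, now applied to $X(u)$, yields $\ell\subset \Gamma_{\ek}(X(u))=X(u)$, which is the first alternative of the lemma. Conversely, suppose $\ell\subset X(u)$; the hypothesis $\ell\cap X(u^k)^o \ne \emptyset$ then forces $X(u)\cap X(u^k)^o\ne \emptyset$, but when $u\ne u^k$ this is impossible since $X(u)\subset X(u^k)\setminus X(u^k)^o=\bigcup_{v<u^k}X(v)$. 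Hence $\ell\subset X(u)$ implies $u^k=u$, proving the last sentence of the lemma.

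It remains, assuming $u^k=us_k\ne u$ and $\ell\not\subset X(u)$, to show that $\ell\cap X(u)$ is a reduced single point. By Corollary \ref{cor:identification}, the projection $\pi_k: G/P\to G/P(k)$ is a $\P^1$-bundle whose fibers are exactly the degree $\ek$ lines in $G/P$, so $\ell=\pi_k^{-1}(x)$ for some $x\in G/P(k)$. Since $\ell$ meets $X(u^k)^o$, we have $x\in \pi_k(X(u^k)^o)=X(u)^o \subset G/P(k)$ (note $u\in W^{P(k)}$, because $u^k=us_k>u$). Scheme-theoretically,
\[
\ell\cap X(u)=\pi_k^{-1}(x)\cap X(u)=\phi^{-1}(x),\qquad \phi:=\pi_k|_{X(u)}\colon X(u)\subset G/P \to X(u)\subset G/P(k).
\]
Thus it suffices to prove that $\phi$ restricts to an isomorphism on the open cells $X(u)^o$, for then $\phi^{-1}(x)$ is a reduced single point.

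This last step is the main technical obstacle. Both cells are $B$-orbits, isomorphic to $B/(B\cap uPu^{-1})$ and $B/(B\cap uP(k)u^{-1})$; the inclusion $B\cap uPu^{-1}\subset B\cap uP(k)u^{-1}$ is automatic from $P\subset P(k)$. For the reverse inclusion I would compare root subgroups: since $P$ is $k$-free, $P(k)$ is obtained from $P$ by adjoining only the root subgroups $U_{\pm\alpha_k}$, so $uP(k)u^{-1}$ differs from $uPu^{-1}$ only by $U_{\pm u(\alpha_k)}$. The factor $U_{-u(\alpha_k)}$ is disjoint from $B$ because $u(\alpha_k)>0$ (as $us_k>u$), and $U_{u(\alpha_k)}$ is already contained in $uPu^{-1}$ since $u^{-1}(u(\alpha_k))=\alpha_k\in R^+$. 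Hence $B\cap uP(k)u^{-1}=B\cap uPu^{-1}$, and $\phi$ identifies the two open cells. Consequently $\phi^{-1}(x)$ is a reduced point, so $\ell\cap X(u)$ is a single transversal point (moreover $T_{x'}\ell\cap T_{x'}X(u)=0$, since $T_{x'}\ell=\ker d\pi_k$ while $d\pi_k$ is injective on $T_{x'}X(u)$), finishing the proof.
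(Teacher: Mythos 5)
Your proof takes a genuinely different and more explicit route than the paper's. The paper realizes $\ell\cap X(u)$ as a fiber of the evaluation map $\ev_2:\ev_1^{-1}(X(u))\to X(u^k)$, then deduces smoothness of this fiber from rational singularities of the source together with $B$-equivariance, and finally uses the dichotomy between fiber dimension $0$ and $1$. You instead realize $\ell$ as a fiber of the $\P^1$-bundle $\pi_k:G/P\to G/P(k)$, reduce to the claim that $\pi_k|_{X(u)}$ restricts to an isomorphism between the open cells, and check this by a direct comparison of the stabilizers $B\cap uPu^{-1}$ and $B\cap uP(k)u^{-1}$. When it works this is cleaner and more elementary than the paper's argument; the first paragraph, establishing $\ell\subset X(u)\Leftrightarrow u^k=u$, is correct.

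There is, however, a gap in the final step. Proving that $\pi_k|_{X(u)^o}:X(u)^o\to X(u)^o$ is an isomorphism shows that $\phi^{-1}(x)\cap X(u)^o$ is a single reduced point; it does \emph{not} by itself rule out that $\phi^{-1}(x)=\ell\cap X(u)$ has further (necessarily singular) points in the boundary $\partial X(u)$, and if it did, both the ``single point'' and the transversality claims would fail. You can close the gap in at least two ways. One is to note that $\phi:X(u)\to X(u)\subset G/P(k)$ is proper and birational onto a normal target, hence has connected fibers by Zariski's Main Theorem; since the point of $\phi^{-1}(x)\cap X(u)^o$ is both open and closed in $\phi^{-1}(x)$, connectedness forces $\phi^{-1}(x)$ to equal that point. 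Alternatively, argue directly: if $\ell$ met some $X(v)$ with $v<u$ in $W^P$, Proposition \ref{prop:kfreelines} would give $\ell\subset\Gamma_{\ek}(X(v))=X(v^k)$, and $\ell\cap X(u^k)^o\ne\emptyset$ would then force $u^k\le v^k$, which is impossible because $\ell(v^k)\le\ell(v)+1\le\ell(u)<\ell(u)+1=\ell(u^k)$. With either fix your argument is complete and correct.
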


\begin{proof} As before $X=G/P$. Consider the evaluation map \[ \ev_2:\ev_1^{-1} (X(u))\subset \Mb_{0,2}(X, \ek) \to \ev_2(\ev_1^{-1} X(u))= X(u^k) \/. \] By \cite[Prop. 3.2]{buch.chaput.ea:finiteness} and because the moduli space $\Mb_{0,2}(X, \ek)$ is irreducible \cite{thomsen:irreducibility}, this is a locally trivial fibration over the open cell $X(u^k)^o$ and it has irreducible fibres over this cell.
We claim that each of these fibres is isomorphic to $\ell \cap X(u)$. Indeed, by Prop. \ref{prop:kfreelines} the line $\ell$ satisfies that $\ell \subset X(u^k)$, and by hypothesis there exists $x_0 \in \ell \cap X(u^k)^o$. The fibre over $x_0$ consists of all lines $\ell'$ of degree $\ek$ with two marked points $x_0,y \in \ell'$ with the additional property that $y \in \ell' \cap X(u)$. (It is possible that $x_0=y$ in which case $(x_0,y,\ell')$ corresponds to an element $f:C_1 \cup C_2 \to X$ in the boundary of $\Mb_{0,2}(X, \ek)$, where $f(C_1) = x_0=y$, and $f(C_2) = \ell'$.) Cor. \ref{cor:kfreelines} implies that there exists a unique line of degree $\ek$ through $x_0$, so $\ell'=\ell$, and the fibre above is the intersection $\ell \cap X(u)$. Since $\ev_1^{-1}(X(u))$ has rational singularities \cite[Cor. 3.1]{buch.chaput.ea:finiteness}, so does a general fibre of $\ev_2$ \cite[Lemma 3]{brion:Kpos}. Using that $\ev_2$ is $B$-equivariant this implies that all fibres over $X(u^k)^o$ are isomorphic. But these fibres have dimension at most $1$, therefore they must be smooth.

Then we have two possibilities: either $\dim \ell \cap X(u) =1$, when $\ell \subset X(u)$ (and {\em a fortiori} $u^k=u$), or $\ell \cap X(u)$ is $0$-dimensional, irreducible, and normal, therefore it is a reduced point.\end{proof}

We can now prove a part of Thm. \ref{T:main}:

\begin{thm}\label{T:maincohtriv} Let $P$ be a $k$-free parabolic subgroup and $u,v \in W^P$. Then $\Gamma_{\ek}(u,v) = X(u^k) \cap Y(v_k)$ and the map $\ev_3:GW_{\ek}(u,v) \to \Gamma_{\ek}(u,v)$ is cohomologically trivial.\end{thm}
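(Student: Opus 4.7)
The plan is to first observe that the equality $\Gamma_{\ek}(u,v) = X(u^k) \cap Y(v_k)$ is precisely Cor.~\ref{cor:gwimage}, so the real content lies in the cohomological triviality of $\ev_3 : GW_{\ek}(u,v) \to \Gamma_{\ek}(u,v)$. I would derive this by applying Thm.~\ref{T:push}, which requires that source and target are projective $T$-varieties with rational singularities, that $\ev_3$ is surjective and $T$-equivariant, and that the generic fiber is an irreducible rational variety.

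The first set of hypotheses is essentially known. The source $GW_{\ek}(u,v)$ is irreducible with rational singularities by \cite[\S 3]{buch.chaput.ea:finiteness}; the target is a Richardson variety, hence has rational singularities by Lemma~\ref{lemma:projrich}; surjectivity of $\ev_3$ onto this Richardson is part of Cor.~\ref{cor:gwimage} itself; and $T$-equivariance is immediate from the $G$-action on the moduli of stable maps. So the only serious step is identifying the general fiber.

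This is the main obstacle, and my plan is the following. Fix $x$ in the dense open stratum $X(u^k)^o \cap Y(v_k)^o$. By Cor.~\ref{cor:kfreelines} there is a unique line $\ell_x$ of degree $\ek$ through $x$, and hence every $f \in \ev_3^{-1}(x) \cap GW_{\ek}(u,v)$ is a stable map whose image is $\ell_x$, with $f(pt_3)=x$, $f(pt_1) \in \ell_x \cap X(u)$, and $f(pt_2) \in \ell_x \cap Y(v)$. By Lemma~\ref{lemma:intersection}, the intersection $\ell_x \cap X(u)$ is either a single transversal point (if $u^k \ne u$) or all of $\ell_x$ (if $u^k = u$, in which case $\ell_x \subset X(u)$), and the analogous dichotomy holds for $\ell_x \cap Y(v)$. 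Thus the fiber reduces to a question about stable maps to $\P^1$ with prescribed evaluations.

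I would conclude by a brief case split on whether $u^k = u$ and whether $v_k = v$. When both $u^k \ne u$ and $v_k \ne v$, all three evaluation points on $\ell_x$ are prescribed, the stable map is rigid, and the fiber is a single reduced point (even when two of the prescribed points happen to coincide, the unique bubbled domain still gives a single moduli point). When exactly one of the two equalities holds, one of $f(pt_1)$ or $f(pt_2)$ is free on $\ell_x$ and the fiber is an irreducible rational curve $\simeq \P^1$. When $u^k = u$ and $v_k = v$, the constraints on $pt_1, pt_2$ become vacuous and the fiber coincides with the general $\ev_3$-fiber of $\Mb_{0,3}(X,\ek) \to X$ over $x$, namely the irreducible rational surface parametrizing two free marked points on $\ell_x$. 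In every case the generic fiber is irreducible and rational, and Thm.~\ref{T:push} then delivers the cohomological triviality.
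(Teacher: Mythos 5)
Your proposal is correct and takes essentially the same approach as the paper: quote Cor.~\ref{cor:gwimage} for the identification of $\Gamma_{\ek}(u,v)$, then verify the hypotheses of Thm.~\ref{T:push} by using Cor.~\ref{cor:kfreelines} and Lemma~\ref{lemma:intersection} to identify the general fiber of $\ev_3$ over a point of $X(u^k)^o\cap Y(v_k)^o$ as a rational variety. Your case split is a touch finer than the paper's (which groups the first two of your three cases together as $(\ell\cap X(u))\times(\ell\cap Y(v))$ and handles $\ell\subset X(u)\cap Y(v)$ by citing irreducibility from \cite[Prop.~3.2]{buch.chaput.ea:finiteness}), but the argument is the same in substance.
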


\begin{proof} By Cor. \ref{cor:gwimage} we only need to show the cohomological triviality. The idea is to use Thm. \ref{T:push}, so we need to check that all hypotheses are satisfied. First, the Gromov-Witten variety
$GW_{\ek}(u,v)$ and the Richardson variety $\Gamma_{\ek}(u,v)$ have rational singularities, by \cite[Cor. 3.1]{buch.chaput.ea:finiteness} respectively \cite[\S 1]{brion:Kpos}.
Take $x \in X(u^k) ^o \cap Y(v_k)^o$ and let $\ell \ni x$ be the corresponding line of degree $\ek$ given by Cor. \ref{cor:kfreelines}. Note that $X(u^k) ^o \cap Y(v_k)^o$ is open and dense in $\Gamma_{\ek}(u,v)$, therefore the fibre $F_x$ over $x$ is general. We now identify this fibre, using the definition of stable maps. According to Lemma \ref{lemma:intersection}, we have three situations:   $\ell \cap X(u) = pt$ or $\ell \cap Y(v) = pt$ or $\ell \subset X(u) \cap Y(v)$. In the first two situations $F_x$ is isomorphic to $(\ell \cap X(u) ) \times (\ell \cap Y(v))$, which is a rational variety. Let now $\ell \subset X(u) \cap Y(v)$. Then $u^k = u$ and $v_k = v$. Consider the moduli space $\Mb_{0,3}(\ell, 1) \simeq \Mb_{0,3}(\P^1, 1)$, where $1 \in H_2(\P^1)$ is the fundamental class of $\P^1$. Then $F_x$ is isomorphic to $\ev_3^{-1}(x) \subset \Mb_{0,3}(\ell, 1)$. On one side $\ev_3^{-1}(x)$ contains an open dense set birational to $\ell \times \ell$. On the other side $\ev_3^{-1}(x)$ is irreducible \cite[Prop. 3.2]{buch.chaput.ea:finiteness}, therefore $F_x$ is rational. \end{proof}

\subsection{The case when $(P, \alpha_k) \in \mathcal{P}$} In this section we will prove the remaining parts of Thm. \ref{T:main}. Note that if $(P,\alpha_k) \in \mathcal{P}$, $u \in W^P$ and $Q \subset P$ then clearly $(Q,\alpha_k) \in \mathcal{P}$ and $u \in W^Q$. The key result in this section is the following:

\begin{thm}\label{T:surj} Let $(P,\alpha_k) \in \mathcal{P}$ and $Q \subset P$ a $k$-free parabolic subgroup. Then the map $\Pi: \Mb_{0,3}(G/Q, \ek) \to \Mb_{0,3}(G/P, \ek)$ is surjective and cohomologically trivial. \end{thm}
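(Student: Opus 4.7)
The plan is to deduce both assertions from Theorem~\ref{T:push}. Since $\Pi$ is induced by the $G$-equivariant projection $\pi:G/Q\to G/P$, it is $T$-equivariant, and both $\Mb_{0,3}(G/Q,\ek)$ and $\Mb_{0,3}(G/P,\ek)$ are irreducible projective $T$-varieties with rational singularities (they have finite quotient singularities by \cite{fulton.pandharipande} and are irreducible by \cite{thomsen:irreducibility}). It therefore suffices to show that $\Pi$ is surjective and that its general fiber is irreducible and rational.

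For surjectivity, $\Pi$ is proper, so it is enough to cover a dense open subset of the target. Pick a stable map $f:(\P^1;p_1,p_2,p_3)\to G/P$ with irreducible domain, whose image is a line $\ell\subset G/P$ of degree $\ek$. By Theorem~\ref{thm:strickland} we have $L_k(G/P)\simeq G/P(k)$; since $Q$ is $k$-free and $Q\subset P$, the inclusion $\Delta_Q\subset \Delta_{P_k}$ holds, giving $Q(k)\subset P(k)$ and a surjective $G$-equivariant projection $L_k(G/Q)\simeq G/Q(k)\to G/P(k)\simeq L_k(G/P)$. Choose any line $\tilde{\ell}\subset G/Q$ of degree $\ek$ with $\pi(\tilde{\ell})=\ell$. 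Since $\pi|_{\tilde{\ell}}:\tilde{\ell}\to\ell$ is a degree one morphism between two copies of $\P^1$, it is an isomorphism, and $\tilde{f}:=(\pi|_{\tilde{\ell}})^{-1}\circ f$ is a lift with $\Pi(\tilde{f})=f$.

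The same analysis computes the general fiber. A lift $\tilde{f}\in\Mb_{0,3}(G/Q,\ek)$ of such an $f$ has irreducible domain $\P^1$ and class $\ek\in H_2(G/Q;\Z)$, so its image is an irreducible curve of degree $\ek$ in $G/Q$, hence a line $\tilde{\ell}$ (use Corollary~\ref{cor:kfreelines} applied to the $k$-free $Q$). As above $\pi|_{\tilde{\ell}}:\tilde{\ell}\to\ell$ is an isomorphism, so $\tilde{f}$ is determined by $\tilde{\ell}$. Thus the fiber is in bijection with lines in $G/Q$ projecting to $\ell$, i.e.\ the fiber of $G/Q(k)\to G/P(k)$ over $[\ell]$, which is the partial flag variety $P(k)/Q(k)$ --- irreducible and rational. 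Irreducible-domain maps form a dense open in $\Mb_{0,3}(G/P,\ek)$, so Theorem~\ref{T:push} yields $\Pi_*\cO_{\Mb_{0,3}(G/Q,\ek)}=\cO_{\Mb_{0,3}(G/P,\ek)}$ and $R^i\Pi_*\cO_{\Mb_{0,3}(G/Q,\ek)}=0$ for $i>0$.

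The main delicate point is the fiber identification, specifically excluding pathological lifts. On the stability side, any component of the domain contracted by $\tilde{f}$ is contracted by $\pi\circ\tilde{f}=f$, so stability of $f$ implies stability of $\tilde{f}$; conversely $\Pi$ is well-defined because $\pi_*\ek=\ek$ sends effective degree $\ek$ in $H_2(G/Q;\Z)$ to $\ek$ in $H_2(G/P;\Z)$. The more subtle issue is ensuring that the image $\tilde{\ell}$ of a generic lift is genuinely a line of class $\ek$ rather than a higher-degree curve whose push-forward happens to be $\ek$ --- here this is automatic because $\tilde{f}$ has irreducible domain and degree $\ek$, so $\tilde{\ell}$ is an irreducible rational curve of class $\ek$ in $G/Q$, hence an honest degree $\ek$ line. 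Combined with Lemma~\ref{lemma:minrep}-style bookkeeping on the parabolics $Q(k)\subset P(k)$, this finishes the verification of the hypotheses of Theorem~\ref{T:push}.
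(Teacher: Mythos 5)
Your proof is essentially correct and parallels the paper's, with one genuine difference in the technical route. The paper reduces through the forgetful map to $\Mb_{0,1}(G/Q,\ek)\simeq G/Q$ and $\Mb_{0,1}(G/P,\ek)\simeq G/P_k$, identifying the general fiber of $\Pi$ with $P_k/Q$; you instead project one step further to $\Mb_{0,0}$ (equivalently, $L_k$) and identify the fiber with $P(k)/Q(k)$. These are the same flag variety: since $P_k$ and $Q$ are $k$-free, $\alpha_k$ is disconnected from $\Delta_{P_k}\supset\Delta_Q$ in the Dynkin diagram, so passing from $(\Delta_{P_k},\Delta_Q)$ to $(\Delta_{P_k}\cup\{\alpha_k\},\Delta_Q\cup\{\alpha_k\})$ just tensors both Levis with a common $SL_2$ factor and leaves the quotient unchanged. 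Your explicit construction of a lift $\tilde f=(\pi|_{\tilde\ell})^{-1}\circ f$ is cleaner for surjectivity than the paper's rather terse "this determines $\tilde f$ uniquely."

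One small gap: you establish a \emph{set-theoretic} bijection between the fiber $\Pi^{-1}(f)$ and $P(k)/Q(k)$, but Theorem~\ref{T:push} requires the general fiber to be an irreducible rational \emph{variety}, not merely a set in bijection with one. The paper handles this by exhibiting a morphism $for_F:F\to P_k/Q$ (induced by the forgetful map), checking it is bijective, and invoking Zariski's Main Theorem \cite[p.~209]{mumford:redbook} (with $P_k/Q$ normal) to upgrade the bijection to an isomorphism. Your argument needs the analogous step: the forgetful morphism to $\Mb_{0,0}(G/Q,\ek)\simeq G/Q(k)$ restricts to a morphism from $\Pi^{-1}(f)$ onto the fiber of $G/Q(k)\to G/P(k)$ over $[\ell]$, and your bijectivity plus Zariski then yields the required isomorphism. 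With that addition the proof is complete.
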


\begin{proof} We use again Thm.~\ref{T:push}, so we need to check that all hypotheses hold. Since the moduli spaces have rational singularities, it suffices to prove that $\Pi$ is surjective and a general fibre is irreducible and rational. We have a commutative diagram \[ \xymatrix{\Mb_{0,3}(G/Q, \ek) \ar[r]^{\Pi}\ar[d]^{for} & \Mb_{0,3}(G/P, \ek) \ar[d]^{for} \\ \Mb_{0,1}(G/Q, \ek) \simeq G/Q \ar[r]^{\Pi'} & \Mb_{0,1}(G/P, \ek) \simeq G/P_k }
\] where the horizontal maps are induced by the projection $\pi$, and the vertical maps are the forgetful maps, forgetting points $pt_1, pt_2$ (see \cite{fulton.pandharipande} for details). By Cor. \ref{cor:ident} and Cor. \ref{cor:identification}, there are isomorphisms of the bottom moduli spaces to the listed homogeneous spaces. In particular, the bottom map is surjective and all its fibers are isomorphic to $P_k/Q$, a rational variety. Let $f:(\P^1; 0,1,\infty) \to G/P$ be a general element in $\Mb_{0,3}(G/P, \ek)$, and $F= \Pi^{-1} (f)$. The forgetful map induces a morphism $for_F: F= \Pi^{-1}(f) \to \Pi'^{-1}(f')\simeq P_k/Q$ where $f' = for(f) \in \Mb_{0,1}(G/P,\ek)$. We claim that $for_F$ is an isomorphism, and this will finish the proof of the Theorem.

By Zariski's Main Theorem \cite[p.~209]{mumford:redbook} it suffices to show that $for_F$ is bijective. Let $\tilde{f'} \in \Pi'^{-1} (f')$ and $\tilde{f} \in for_F^{-1}(\tilde{f'}) \subset F$. Since $f$ is general we can assume that the points $f(pt_i) \in G/P$ are all distinct. By definition of stability of maps, this implies that $\tilde{f}$ is defined on $\P^1$ (rather than a tree of $\P^1$'s). Degree reasons imply that both maps $\tilde{f}: \P^1 \to Image(\tilde{f}) =  \tilde{f'}(\P^1)$ and $\pi: Image(\tilde{f}) \subset G/Q \to Image(f) \subset G/P$ are isomorphisms. By definition we must have that $\pi(\tilde{f}(pt_i)) = f(pt_i)$, and this determines $\tilde{f}$ uniquely and proves our claim.\end{proof}

\begin{remark} The previous theorem is no longer true if $(P, \alpha_k) \notin \mathcal{P}$. For example, take $G$ to be of type $B_2$ and $\Delta = \{\alpha_1,\alpha_2\}$ so that $\alpha_1$ is long. Consider the parabolic group $P$ given by $ \Delta_P = \{ \alpha_1 \}$. Notice that $G/P$ is the variety $\OG(2,5)$ of dimension $2$ isotropic planes in $\C^5$ (i.e.~lines on a smooth quadric in $\P^4$); in fact $G/P \simeq \P^3$ using Dynkin symmetry between types $B_2$ and $C_2$. One calculates that $\dim  \Mb_{0,3}(\OG(2,5), \alpha_2^\vee) = 7$. Denoting by $\OF(1,2;5)=G/B$ the full flag manifold of type $B_2$, we have that $\dim  \Mb_{0,3}(\OF(1,2;5), \alpha_2^\vee) = 6$. This shows that the map $\Pi: \Mb_{0,3} (\OF(1,2;5), \alpha_2^\vee) \to \Mb_{0,3} (\OG(2,5), \alpha_2^\vee)$ is {\bf not }surjective. If the degree $\alpha_2^\vee \in H_2(G/B)$ is replaced by $\alpha^\vee= \alpha_1^\vee + \alpha_2^\vee$ (for $\alpha = \alpha_1 + 2\alpha_2$) then $\Pi: \Mb_{0,3} (\OF(1,2;5), \alpha^\vee) \to \Mb_{0,3} (\OG(2,5), \alpha_2^\vee)$ becomes surjective.\end{remark}

We are now ready to prove the remaining parts of Thm. \ref{T:main}:

\begin{proof}[Proof of Thm. \ref{T:main}] Part (a) was proved in Cor. \ref{cor:gwimage} and cohomological triviality in $k$-free case in Thm. \ref{T:maincohtriv}.
The equality $\Pi^{-1}(GW_{\ek}(u,v)) = GW_{\ek}(\widehat{u},v)$ follows from surjectivity of $\Pi$ (Thm. \ref{T:surj}) and because $\pi^{-1}X(u) = X(\widehat{u})$ and $\pi^{-1} Y(v) = Y(v)$. Since $X(u)$ and $Y(v)$ are opposite Schubert varieties, Kleiman transversality theorem \cite{kleiman} (see e.g. \cite[\S 1]{brion:Kpos}) implies that $GW_{\ek}(u,v)$ intersects the open dense set of points $f \in \Mb_{0,3}(G/P, \ek)$ which satisfy the generality conditions from the proof of Thm. \ref{T:surj}. Then the fibre over such a general point in $GW_{\ek}(u,v)$ is isomorphic to $P_k/Q$, hence it is rational. Finally, we know that $2$-point Gromov-Witten varieties are irreducible \cite[Cor. 3.3]{buch.chaput.ea:finiteness} and have rational singularities \cite[Cor. 3.1]{buch.chaput.ea:finiteness}, and invoking again Thm. \ref{T:push} yields the cohomological triviality of the top map. In particular, all maps are now surjective. By Cor.~\ref{cor:gwimage}, $\Gamma_{\ek}(\widehat{u}, v)$ is a Richardson variety, therefore $\Gamma_{\ek}(u,v)$ has rational singularities and the bottom map is cohomologically trivial by Lemma \ref{lemma:projrich}. Since the left vertical map is also cohomologically trivial (cf.~Thm.~\ref{T:maincohtriv}), and since all varieties in the diagram are normal, a standard argument based on the Grothendieck spectral sequence \cite[p.~74]{huybrechts} shows that the right vertical map is cohomologically trivial as well. \end{proof}

\section{K-theoretic Gromov-Witten invariants} The goal of this section is to give formulas for KGW invariants $\gw{\cO^u,\cO^v, \cO^w}{\ek}$ for $X=G/P$, provided that $(P, \alpha_k) \in \mathcal{P}$.
The main result is:

\begin{thm}\label{T:mainKGW} (a) Assume that $P$ is $k$-free and $[\cF] \in K_T(X)$. Then \[ \gw{\cO^u,\cO^v, [\cF]}{\ek} = \int_{G/P} \cO^{u_k} \cdot \cO^{v_k} \cdot [\cF] = \gw{\cO^{u_k}, \cO^{v_k}, [\cF]}{0} \/. \] In particular, $ \gw{\cO^u,\cO^v, (\cO^w)^\vee}{\ek} = c_{u_k, v_k}^w$ is the structure constant in $K_T(G/P)$.

(b) Let $(P, \alpha_k) \in \mathcal{P}$,  $Q \subset P$ two parabolic  groups containing the Borel group $B$, and $[\cF], [\mathcal{G}], [\mathcal{H}] \in K_T(G/P)$. Then
 \[ \gw{[\cF],[\mathcal{G}], [\mathcal{H}]}{\ek,G/P} = \gw{\pi^*[\cF],\pi^*[\mathcal{G}],\pi^*[\mathcal{H}]}{\ek,G/Q} \] where the KGW invariants are on $G/P$ and $G/Q$ respectively and $\pi:G/Q \to G/P$ is the projection.


(c)  (K-theoretic Peterson comparison formula) Let $(P, k) \in \mathcal{P}$ such that $B  \subset P$. Then we have equalities
\[ \gw{\cO^u, \cO^v, \cO^w}{\ek, G/P} = \gw{\cO^u, \cO^v, \cO^w}{\ek, G/B} = \gw{\cO^u, \cO^v, \cO^{ww_{P_k}}}{\ek, G/B} \] where the KGW invariants are on $G/P$ and $G/B$ respectively. Here $w_{P_k}$ is the longest  element in the Weyl group $W_{P_k}$. \end{thm}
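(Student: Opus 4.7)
The plan is to establish parts (a), (b), (c) in turn, each reducing to the cohomological triviality statements of Thm.~\ref{T:main} and Thm.~\ref{T:surj}.

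For (a), I first upgrade the identification $\Mb_{0,1}(G/P,\ek)\simeq G/P$ of Cor.~\ref{cor:ident} to three marked points. Using Cor.~\ref{cor:kfreelines} (unique line of degree $\ek$ through each point, for $P$ $k$-free) and tracking the stability-induced bubbling on the boundary, the map $(\ev_1,\ev_2,\ev_3)$ gives a bijection $\Mb_{0,3}(G/P,\ek)\to G/P\times_M G/P\times_M G/P$, where $M=G/P(k)$ and $\pi:G/P\to M$ is the $\P^1$-bundle. Both sides are normal projective varieties of dimension $\dim G/P+2$, so Zariski's Main Theorem yields an isomorphism, under which $\ev_i$ corresponds to the $i$-th projection $p_i$. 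Then the KGW invariant equals
\[ \int_{G/P} [\cF_3]\cdot p_{3*}\bigl(p_1^*[\cF_1]\cdot p_2^*[\cF_2]\bigr).\]
Flat base change in the Cartesian square with the two projections $G/P\times_M G/P\to G/P$ gives $q_{1*}q_2^*=\pi^*\pi_*$; two such applications combined with the projection formula yield $p_{3*}(p_1^*[\cF_1]\cdot p_2^*[\cF_2])=\pi^*\pi_*[\cF_1]\cdot\pi^*\pi_*[\cF_2]$. Finally, $\pi^*\pi_*=\partial_k$ is verified on the Schubert basis via $\pi_*\cO^u_{G/P}=\cO^{u_k}_M$ (Lemma~\ref{lemma:projrich}) and $\pi^*\cO^{u_k}_M=\cO^{u_k}_{G/P}$ (using $u_k\in W^{P(k)}$; compare Lemma~\ref{lemma:minrep}), giving the formula of (a).

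Part (b) is a direct consequence of Thm.~\ref{T:surj}: since $\Pi:\Mb_{0,3}(G/Q,\ek)\to\Mb_{0,3}(G/P,\ek)$ is surjective and cohomologically trivial, $\Pi_*\cO_{\Mb(G/Q)}=\cO_{\Mb(G/P)}$, and combining this with $\Pi^*\ev_i^{P*}=\ev_i^{Q*}\pi^*$ (from $\ev_i^P\circ\Pi=\pi\circ\ev_i^Q$) and the projection formula produces
\[ \gw{[\cF],[\cG],[\calH]}{\ek,G/P}=\int_{\Mb(G/Q)}\Pi^*\bigl(\ev_1^{P*}[\cF]\cdot\ev_2^{P*}[\cG]\cdot\ev_3^{P*}[\calH]\bigr)=\gw{\pi^*[\cF],\pi^*[\cG],\pi^*[\calH]}{\ek,G/Q}.\]

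For (c), the first equality is (b) with $Q=B$ (always $k$-free), after noting that for $u\in W^P$ both $\pi^{-1}(Y(u)_{G/P})$ and $Y(u)_{G/B}$ are irreducible of codimension $\ell(u)$ in $G/B$, hence equal, so $\pi^*\cO^u_{G/P}=\cO^u_{G/B}$. For the second equality, apply (a) on $G/B$ to rewrite both sides as
\[\int_{G/B}\cO^{u_k}\cdot\cO^{v_k}\cdot\cO^w\quad\text{and}\quad\int_{G/B}\cO^{u_k}\cdot\cO^{v_k}\cdot\cO^{ww_{P_k}};\]
since $u_k,v_k\in W^{P_k}$ (Lemma~\ref{lemma:minrep} applied to the $k$-free $P_k$), pulling back via $\tau:G/B\to G/P_k$ gives $\tau^*\cO^{u_k}_{G/P_k}=\cO^{u_k}_{G/B}$ and similarly for $v_k$, so the projection formula reduces the claim to $\tau_*\cO^w_{G/B}=\tau_*\cO^{ww_{P_k}}_{G/B}=\cO^w_{G/P_k}$. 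The first equality holds because $\tau|_{Y(w)_{G/B}}\to Y(w)_{G/P_k}$ has rational generic fibre $P_k/B$; the second because $\tau|_{Y(ww_{P_k})_{G/B}}\to Y(w)_{G/P_k}$ is a birational morphism between varieties with rational singularities; both give cohomological triviality by Thm.~\ref{T:push}.

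The main obstacle is establishing the fiber-product identification $\Mb_{0,3}(G/P,\ek)\simeq(G/P)^{\times_M 3}$ used in (a); although plausible from Cor.~\ref{cor:ident} and the uniqueness of lines, the three-marked-point case requires careful analysis of the bubbling strata and an application of Zariski's Main Theorem. Once this is secured, (a) reduces to a routine push-pull calculation, (b) is immediate from Thm.~\ref{T:surj}, and (c) follows by combining (a) and (b) with standard cohomological triviality for projections of Schubert varieties.
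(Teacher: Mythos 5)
Your argument for part (a) rests on the claim that $(\ev_1,\ev_2,\ev_3)$ is a \emph{bijection} from $\Mb_{0,3}(G/P,\ek)$ onto the triple fiber product $G/P\times_M G/P\times_M G/P$, and you then invoke Zariski's Main Theorem to promote this to an isomorphism. This is false. Over the small diagonal $\{(x,x,x)\}$ the fiber of $\EV$ is positive dimensional: a stable map with $\ev_1=\ev_2=\ev_3=x$ must have a contracted component carrying all three markings and the node, and the cross-ratio of these four special points gives a full $\Mb_{0,4}\simeq\P^1$ of nonisomorphic such maps. Concretely, $\EV:\Mb_{0,3}(\P^1,1)\to(\P^1)^3$ contracts a divisor onto the codimension-two small diagonal, so the map is not quasi-finite and Zariski's Main Theorem does not apply; the two spaces are not isomorphic. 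This is a genuine gap. It is repairable: $\EV$ is a surjective map of projective varieties with rational singularities (the source has finite quotient singularities, the target is smooth), and its general fiber is a single point (three distinct collinear points determine a unique parametrized line up to reparametrization), so Theorem~\ref{T:push} gives $\EV_*\cO_{\Mb_{0,3}}=\cO$ and vanishing of higher direct images. With that cohomological triviality in place, your subsequent push--pull computation, namely $p_{3*}(p_1^*[\cF_1]\cdot p_2^*[\cF_2])=\pi^*\pi_*[\cF_1]\cdot\pi^*\pi_*[\cF_2]=\partial_k[\cF_1]\cdot\partial_k[\cF_2]$, is correct and yields part (a) for arbitrary classes in one stroke. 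But this cohomological-triviality lemma for $\EV$ is not in the paper and must be stated and proved; as written, your proof of (a) is broken.

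Your parts (b) and (c) are correct and essentially reproduce the paper's arguments: (b) is the projection-formula computation with $\Pi$ surjective and cohomologically trivial from Theorem~\ref{T:surj}, and (c) combines (b) with (a) applied on $G/B$ and the standard pushforward identities $(\pi_{P_k})_*\cO^w=(\pi_{P_k})_*\cO^{ww_{P_k}}=\cO^w_{G/P_k}$ for $w\in W^{P_k}$. It is worth noting that your route to (a) is genuinely different from the paper's. The paper never uses any global identification of $\Mb_{0,3}(G/P,\ek)$; instead it pushes forward only along $\ev_3$ restricted to the Gromov--Witten subvariety $GW_{\ek}(u,v)$ (Theorem~\ref{T:maincohtriv}), first establishing the formula with an opposite Schubert class $\cO_u$ in the first slot, and then transports it to $\cO^u$ by expanding $\cO^u$ in the $\{\cO_z\}$ basis and using $\Lambda$-linearity of $\partial_k$ together with Lemma~\ref{lemma:div}. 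Your approach, once patched, is cleaner in that it handles arbitrary triples $[\cF_1],[\cF_2],[\cF_3]$ directly without any basis manipulation; the paper's approach avoids needing any global statement about $\Mb_{0,3}$ beyond the GW subvarieties, which is why it only proves (a) for two Schubert classes and one arbitrary class.
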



Note that if $(P,\alpha_k) \in \mathcal{P}$ then $P_k$ is $k$-free, so the Theorem gives an explicit ``quantum=classical" formula for all of these KGW invariants on $G/P$. The proof of (a) and (b) will repeatedly use Thm. \ref{T:main} above;  part (c) follows from these two.

We recall next the definition and relevant properties of the divided difference operator in K-theory. This will be crucially used to obtain formulas for KGW invariants $\gw{\cO^u,\cO^v, [\cF]}{\ek}$, knowing similar formulas for $\gw{\cO_u,\cO^v, [\cF]}{\ek}$, where the Schubert varieties are opposite to each other. Note that $\cO^u = \cO_{w_0 uW_P}$ as classes in {\em non-equivariant} K-theory, but this is no longer true equivariantly. In fact, the
first type of KGW invariants appear in the definition of structure constants in the equivariant quantum K-theory.


Let $\alpha_k \in \Delta \setminus \Delta_P$ be a simple root and $P$ a $k$-free parabolic subgroup. Define the parabolic group $P(k)$ as in \S \ref{ss:lines}. Let $\pi_k: G/P \to G/P(k)$ be the natural projection. Its fibre is $P(k)/P \simeq \P^1$. Form the fibre diagram
\[ \xymatrix{ \mathcal{Z} \simeq G/P \times_{G/P(k)} G/P \ar[rrr]^{pr_1} \ar[d]^{pr_2 } &  & & G/P \ar[d]^{\pi_k} \\ G/P \ar[rrr]^{\pi_k}&& & G/P(k) }  \/\] where $pr_i$ are the natural projections. The {\em divided difference operator} $\partial_k: K_T(G/P) \to K_T(G/P)$ is defined by \[ \partial_k = (pr_2)_* pr_1^* = \pi_k^* (\pi_k)_* \quad \/. \] In the case when $P=B$ is the Borel subgroup, this is a famous endomorphism of $\Lambda$-algebras $K_T(X)$ satisfying $\partial_k^2 = \partial_k$ and the ``braid relations" - see e.g. \cite[Prop. 2.4]{kostant.kumar:KT}. Abusing notation we also denote by the same symbol $\partial_k$ the map sending a variety $V \subset G/P$ to $\partial_k (V) = \pi_k^{-1} (\pi_k (V)) \subset G/P$; it will be clear from the context which object is referred to.

\begin{lemma}\label{lemma:div} Let $P$ be a $k$-free parabolic subgroup. Then \[ \partial_k (\cO^w) = \cO^{w_k}; \quad \partial_k (\cO_w) = \cO_{w^k} \/. \] Moreover, if $R \subset G/P$ is a Richardson variety then $\partial_k(R)$ is irreducible and it has rational singularities, and furthermore $\partial_k[\cO_R]  = [\cO_{\partial_k(R)}] \in K_T(G/P)$. \end{lemma}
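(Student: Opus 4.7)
The plan is to unwind the definition $\partial_k = \pi_k^*(\pi_k)_*$ directly, using three ingredients: (i) for $k$-free $P$, the reflection $s_k$ commutes with every element of $W_P$, so $W_{P(k)} = W_P \cdot \langle s_k\rangle$ is a direct product; (ii) cohomological triviality of $\pi_k$ restricted to Schubert (hence Richardson) varieties, which is a special case of Lemma \ref{lemma:projrich}; and (iii) flatness of the $\P^1$-bundle $\pi_k \colon G/P \to G/P(k)$, which ensures that scheme-theoretic preimages compute pullbacks of structure sheaves.

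For the formula $\partial_k(\cO^w) = \cO^{w_k}$, I would first observe that the minimal length representative of the coset $wW_{P(k)}$ in $W^{P(k)}$ is precisely $w_k$ as defined in (\ref{E:hecke}); Lemma \ref{lemma:minrep} and the $k$-freeness hypothesis confirm $w_k \in W^{P(k)} \subset W^P$. Thus $\pi_k(Y(w)) = Y(w_k) \subset G/P(k)$, and by Lemma \ref{lemma:projrich} (applied to the Richardson variety $Y(w)$) the restriction $\pi_k\colon Y(w) \to Y(w_k)$ is cohomologically trivial, so $(\pi_k)_*\cO^w = [\cO_{Y(w_k)}]$ in $K_T(G/P(k))$. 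Flatness of $\pi_k$ gives $\pi_k^*[\cO_{Y(w_k)}] = [\cO_{\pi_k^{-1}Y(w_k)}]$, and a short dimension/irreducibility argument identifies $\pi_k^{-1}(Y(\bar w) \subset G/P(k))$ with $Y(\bar w) \subset G/P$ for any $\bar w \in W^{P(k)}$, yielding $\partial_k\cO^w = \cO^{w_k}$. The companion formula $\partial_k(\cO_w) = \cO_{w^k}$ is entirely parallel: $\pi_k(X(w)) = X(w_k)$, while $\pi_k^{-1}(X(w_k))$ is now the Schubert variety attached to the \emph{longer} of the two representatives $\{w, ws_k\}$ of $w W_{P(k)}$ inside $W^P$, which is $w^k$ by (\ref{E:hecke}).

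For the Richardson statement, I would apply Lemma \ref{lemma:projrich} directly to $R$ to obtain that $\pi_k(R)$ has rational singularities and that $(\pi_k)_*[\cO_R] = [\cO_{\pi_k(R)}]$. Since $\pi_k$ is a Zariski-locally trivial $\P^1$-bundle, the set $\partial_k(R) = \pi_k^{-1}\pi_k(R)$ is itself a $\P^1$-bundle over $\pi_k(R)$; this makes it irreducible and endows it with rational singularities, since smooth morphisms preserve rational singularities. Flatness of $\pi_k$ gives $\pi_k^*[\cO_{\pi_k(R)}] = [\cO_{\partial_k(R)}]$, so $\partial_k[\cO_R] = [\cO_{\partial_k(R)}]$ as claimed.

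The main obstacle is combinatorial rather than geometric: one must carefully track Weyl group representatives under the two quotient maps $W \to W/W_P$ and $W \to W/W_{P(k)}$, and verify that for $\bar w \in W^{P(k)}$ the opposite Schubert variety $Y(\bar w) \subset G/P$ coincides with the preimage of $Y(\bar w) \subset G/P(k)$. The $k$-freeness hypothesis is precisely what forces each coset $wW_{P(k)}/W_P$ to reduce to the two-element set $\{wW_P, ws_k W_P\}$, making the rules (\ref{E:hecke}) for $w_k$ and $w^k$ translate cleanly into projections and preimages of Schubert varieties under $\pi_k$.
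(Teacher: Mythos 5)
Your proof is correct and follows the same route as the paper's: the paper compresses the first two formulas to a citation of the push-forward/pull-back facts in \S\ref{ss:schubert} (cohomological triviality of $\pi_k$ on Schubert varieties, and $\pi_k^*[\cO_{\Omega'}] = [\cO_{\pi_k^{-1}\Omega'}]$ by flatness), and for the Richardson part invokes Lemma~\ref{lemma:projrich} together with the fact that smooth morphisms preserve rational singularities. You have simply spelled out the combinatorics — identifying the minimal representative of $wW_{P(k)}$ with $w_k$, and the preimage Schubert variety with $Y(w_k)$ resp. $X(w^k)$ — which the paper leaves implicit; one small caveat is that Lemma~\ref{lemma:minrep} by itself only gives $w_k\in W^P$, and placing $w_k$ in $W^{P(k)}$ needs the extra (easy) observation that $w_k(\alpha_k)>0$ by the defining case distinction in (\ref{E:hecke}).
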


\begin{proof} The first two formulas follow from the push-forward and pull-back formulas stated in \S \ref{ss:schubert}. Further, $\pi_k(R)$ has rational singularities by Lemma \ref{lemma:projrich}. Since $\pi_k$ is a smooth morphism, it follows that $\pi_k^{-1} (\pi_k(R))$ has rational singularities as well. The formula for $\partial_k[\cO_R]$ follows again from Lemma \ref{lemma:projrich}. \end{proof}

We are now ready to prove the first two parts of Thm. \ref{T:mainKGW}:

\begin{proof}[Proof of Thm. \ref{T:mainKGW} parts (a) and (b)] The proof of part (a) is divided into two parts. We first prove that \begin{equation}\label{E:kfreeopp} \gw{\cO_u, \cO^v, [\cF]}{\ek} = \int_X \cO_{u^k} \cdot \cO^{v_k} \cdot [\cF] \/. \end{equation} Using projection formula, the calculation in (\ref{E:2GW}) above, and that $\ev_3: GW_{\ek}(u,v) \to \Gamma_{\ek}(u,v)$ is cohomologically trivial (Thm. \ref{T:maincohtriv}) we get:

 \[\begin{split} \gw{\cO_u, \cO^v, [\cF]}{\ek} = \int_{\Mb_{0,3}(X,\ek)} [\cO_{GW_{\ek}(u,v)}] \cdot \ev_3^*[\cF]  = \int_X (\ev_3)_*[\cO_{GW_d(u,v)}] \cdot [\cF]  &\\ =  \int_X \cO_{u^k} \cdot \cO^{v_k} \cdot [\cF] \/; \end{split}\] the last equality follows because $[\cO_{X(u^k) \cap Y(v_k)}] = [\cO_{u^k}] \cdot [\cO^{v_k}] \in K_T(X)$. In the non-equivariant K-theory this implies the formula in the Theorem, because $\cO_u = \cO^{w_0 uW_P}$. We now prove the equivariant  version. Since   $\{ \cO_u \}$ and $\{\cO^u\}$ are both bases for $K_T(X)$, there is an expansion $\cO^u = \sum_{z \in W^P} f_{u,z} \cO_z $, with $f_{u,z} \in \Lambda$. Applying the divided difference operator $\partial_k$ , which is a $\Lambda$-module endomorphism, and Lemma \ref{lemma:div} we obtain \[ \cO^{u_k} = \partial_k(\cO^u) = \sum_{z \in W^P} f_{u,z} \cO_{z^k} \/. \] We now use the previous identity, the fact that KGW invariant is linear in each argument and equation (\ref{E:kfreeopp}) to obtain \[ \begin{split} \gw{\cO^u, \cO^v , [\cF]}{\ek} = \sum_{z} f_{u,z} \gw{\cO_z, \cO^v, [\cF]}{\ek} =  \sum_{z} f_{u,z} \int_X \cO_{z^k} \cdot \cO^{v_k} \cdot [\cF] & \\ = \int_X (\sum_z f_{u,z} \cO_{z^k}) \cdot \cO^{v_k} \cdot [\cF] = \int_X \cO^{u_k} \cdot \cO^{v_k} \cdot [\cF] \/. \end{split} \]

We turn to the proof of part (b). Note first that if $(P,\alpha_k) \in \mathcal{P}$, $u \in W^P$ and $Q \subset P$ then clearly $(Q,\alpha_k) \in \mathcal{P}$ and $u \in W^Q$.  We have a commutative diagram
\[ \xymatrix{\Mb_{0,3}(G/Q, \ek) \ar[r]^{\Pi} \ar[d]^{\EV} & \Mb_{0,3}(G/P, \ek) \ar[d]^{\EV} \\ (G/Q)^3 \ar[r]^{\pi \times \pi \times \pi} & (G/P)^3 } \] where the top map is surjective and cohomologically trivial (cf. Thm. \ref{T:main}) and $\EV=\ev_1 \times \ev_2 \times \ev_3$. Using projection formula we get \[ \begin{split} \gw{[\cF], [\cG], [\calH]}{\ek,G/P} = \int_{\Mb_{0,3}(G/P, \ek)}\EV^*([\cF] \times [\cG] \times [\calH])  & \\ = \int_{\Mb_{0,3}(G/Q,\ek)}\Pi^*\EV^*([\cF] \times [\cG] \times [\calH]) =
\int_{\Mb_{0,3}(G/Q,\ek)}\EV^*(\pi^*[\cF] \times \pi^*[\cG] \times \pi^*[\calH]) & \\ =  \gw{\pi^*[\cF], \pi^*[\cG], \pi^*[\calH]}{\ek,G/Q} \quad \/.\end{split}  \] \end{proof}


 
Denote by $TX$ the tangent bundle of $X=G/P$. Then the term of degree $\ell(u) + \ell(v) - \ell(w) - \int_{\ek} c_1(TX)$ in the Taylor expansion of the KGW invariant $\gw{\cO^u, \cO^v, \cO_w}{\ek} \in R(T)$ equals the (cohomological) {\em equivariant Gromov-Witten invariant} $\gw{[Y(u)],[Y(v)],[X(w)]}{\ek}$. In turn, this equals the structure constant $c_{u,v}^{w, \ek}$ for the equivariant quantum cohomology of $G/P$ - see \cite{mihalcea:eqqcoh} for details. The next Corollary gives a formula for these invariants:

\begin{cor}\label{cor:qcgb} Let $P$ be $k$-free. Then the equivariant GW invariants satisfy:
\[ \gw{[Y(u)],[Y(v)],[X(w)]}{\varepsilon_k} = \left \{
\begin{array}{ll}
\int_{G/P} [Y(u_k)] \cdot [Y(v_k)] \cdot [X(w)]& \textrm{if } u_k \neq u \textrm{ and } v_k \neq v \\
0 & \textrm { otherwise}
\end{array} \right.  \]
\end{cor}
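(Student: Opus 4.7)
Since Theorem \ref{T:mainKGW}(a) already reduces the K-theoretic Gromov--Witten invariant to a classical equivariant K-theoretic integral on $G/P$, the corollary amounts to extracting its cohomological leading term.

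First, I apply Theorem \ref{T:mainKGW}(a) with $[\cF]=\cO_w$ to obtain
$$\gw{\cO^u,\cO^v,\cO_w}{\ek}=\int_{G/P}\cO^{u_k}\cdot\cO^{v_k}\cdot\cO_w.$$
By the correspondence recalled just before the corollary (from \cite{mihalcea:eqqcoh}), the equivariant cohomological Gromov--Witten invariant on the left is the homogeneous component of degree $\ell(u)+\ell(v)-\ell(w)-\int_{\varepsilon_k}c_1(TX)$ in the Taylor expansion of this K-theoretic invariant. The $k$-freeness of $P$ implies $(\alpha_j,\alpha_k^\vee)=0$ for every $\alpha_j\in\Delta_P$, so $(2\rho_P,\alpha_k^\vee)=0$ and $\int_{\varepsilon_k}c_1(TX)=(2\rho-2\rho_P,\alpha_k^\vee)=2$. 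Thus the target degree is $\ell(u)+\ell(v)-\ell(w)-2$.

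Next, I extract the same Taylor coefficient from the right-hand side. Under the standard Chern character/codimension filtration, $\cO^{u_k}$ has cohomological leading term $[Y(u_k)]$ of degree $\ell(u_k)$, and analogously for $\cO^{v_k}$ and $\cO_w$; using $\ell(u_k)=\ell(u)$ if $u_k=u$ and $\ell(u_k)=\ell(u)-1$ otherwise (and similarly for $v$), the classical equivariant K-theoretic integral $\int_{G/P}\cO^{u_k}\cdot\cO^{v_k}\cdot\cO_w$ lies in filtration level $\ell(u_k)+\ell(v_k)-\ell(w)$ of $R(T)$, with Taylor coefficient in that degree equal to $\int_{G/P}[Y(u_k)]\cdot[Y(v_k)]\cdot[X(w)]$.

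The proof then splits into two cases. If both $u_k\neq u$ and $v_k\neq v$, the filtration level on the right is exactly $\ell(u)+\ell(v)-\ell(w)-2$, matching the target Taylor degree, and the claimed product formula follows. If instead $u_k=u$ or $v_k=v$, the filtration level is at least $\ell(u)+\ell(v)-\ell(w)-1$, strictly larger than the target, so the Taylor coefficient in that degree vanishes and so does the cohomological Gromov--Witten invariant.

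The only subtle point is verifying the compatibility of the Taylor-expansion degree on the left with the codimension filtration on the right, but this is a standard consequence of the equivariant Chern character commuting with pushforward to a point, combined with the fact that the leading term of each Schubert class under this filtration is the corresponding cohomology class. All serious geometric input is already absorbed into Theorem \ref{T:mainKGW}(a).
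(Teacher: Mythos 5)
Your argument is correct, and in the non-vanishing case it coincides with the paper's: apply Theorem~\ref{T:mainKGW}(a) and compare Taylor degrees. Where you diverge is the vanishing case. The paper runs the analogous pushforward computation directly in equivariant cohomology—rewriting $\gw{[Y(u)],[Y(v)],[X(w)]}{\ek}$ (with $v_k=v$, say) as $\int_{G/P}[Y(u)]\cdot(\ev_1)_*[GW_{\ek}(w,v)]$ and observing that $\ev_1\colon GW_{\ek}(w,v)\to X(w^k)\cap Y(v_k)$ has positive-dimensional fibers, so the cohomological pushforward is zero. You instead stay in K-theory throughout and argue by degree: the K-theoretic integral $\int_{G/P}\cO^{u_k}\cdot\cO^{v_k}\cdot\cO_w$ lies in augmentation-filtration level $\ge\ell(u_k)+\ell(v_k)-\ell(w)$, which exceeds the target Taylor degree $\ell(u)+\ell(v)-\ell(w)-2$ whenever $u_k=u$ or $v_k=v$, forcing the coefficient to vanish. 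Both arguments are valid; yours is more uniform (a single filtration count handles both cases) while the paper's makes the geometric source of the vanishing visible. One small imprecision: you say this rests on the equivariant Chern character "commuting with pushforward to a point," but in fact Grothendieck--Riemann--Roch introduces the Todd class, $\operatorname{ch}_T(\rho_*\cF)=\rho_*(\operatorname{ch}_T(\cF)\cdot\operatorname{Td}_T(TX))$. Since $\operatorname{Td}_T(TX)=1+\text{(higher order)}$, this does not affect the leading term you extract, so the conclusion stands, but the justification should invoke GRR rather than literal commutativity.
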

\begin{proof} The hypothesis implies that $\int_{\ek} c_1(TX) =2$.
%
%
The equivariant GW invariant on the left vanishes unless $\ell(u) + \ell(v) \ge \ell(w) + 2$. If $u_k \neq u, v_k \neq v$ then $\ell(u_k) + \ell(v_k) \ge \ell(w)$, so the invariant on the right is the structure constant $c_{u_k,v_k}^w$ in the equivariant (non-quantum) cohomology of $G/P$. The claimed equality follows from Thm. \ref{T:mainKGW} (a). Consider the other situation; we can assume that $v_k = v$. As in the proof of Thm. \ref{T:mainKGW}(a) we obtain that \[  \gw{[Y(u)],[Y(v)],[X(w)]}{\varepsilon_k} = \int_{G/P} [Y(u)] \cdot (\ev_1)_*[GW_{\ek}(w,v)] \/. \] But $\ev_1(GW_{\ek}(w,v))= X(w^k) \cap Y(v_k)$, and a dimension computation shows that $\dim GW_{\ek}(w,v) > \dim \ev_1(GW_{\ek}(w,v))$; thus the integral on the right is $0$.\end{proof}

The next formula relates those KGW invariants on $G/P$ needed to calculate structure constants in quantum K-theory of $G/P$ (see \S \ref{s:QK} below), to structure constants in $K_T(G/Q)$, where $Q$ is $k$-free.

\begin{cor}\label{cor:xi} Let $(P, \alpha_k) \in \mathcal{P}$, and $Q$ a $k$-free parabolic with $Q \subset P$. Then for any $u,v,w \in W^P$: \[ \begin{split} \gw{\cO^u, \cO^v, \xi_w}{\ek, G/P} = & \sum_{z} \gw{\cO^u, \cO^v, \xi_z}{\ek, G/Q} =  \sum_{z} c_{u_k,v_k}^z \/. \end{split} \] where the last two sums are over minimal length representatives $z \in W^Q$ so that $z W_{P} = wW_{P}$, and $c_{u_k,v_k}^z$ are structure constants in $K_T(G/Q)$ - cf. \S \ref{ss:schubert} above. \end{cor}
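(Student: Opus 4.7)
The plan is to reduce the KGW invariant on $G/P$ to one on $G/Q$ via Theorem \ref{T:mainKGW}(b), then invoke Theorem \ref{T:mainKGW}(a) on $G/Q$ (which is $k$-free by hypothesis). First I would apply Theorem \ref{T:mainKGW}(b) to the projection $\pi : G/Q \to G/P$:
\[ \gw{\cO^u, \cO^v, \xi_w}{\ek, G/P} = \gw{\pi^*\cO^u,\, \pi^*\cO^v,\, \pi^*\xi_w}{\ek, G/Q} \/. \]
Since $u, v \in W^P \subset W^Q$ and $\pi^{-1}(Y(u)_P) = Y(u)_Q$ (the discussion of opposite Schubert varieties under projections in \S \ref{ss:schubert}), the pullbacks satisfy $\pi^* \cO^u = \cO^u$ and $\pi^* \cO^v = \cO^v$ in $K_T(G/Q)$.

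The key technical step is to identify $\pi^*\xi_w$, and I would do this via the duality pairing rather than a direct geometric computation. For any $z \in W^Q$, the projection formula gives
\[ \int_{G/Q} \pi^*(\xi_w) \cdot \cO^z \;=\; \int_{G/P} \xi_w \cdot \pi_*(\cO^z_{G/Q}) \/. \]
By Frobenius splitting / cohomological triviality of the restricted projection $\pi : Y(z)_Q \to Y(\bar z)_P$ (cf.~\S \ref{ss:schubert}), one has $\pi_*(\cO^z_{G/Q}) = \cO^{\bar z}_{G/P}$, where $\bar z \in W^P$ is the minimal length representative of the coset $zW_P$. Hence the right-hand side equals $\delta_{w, \bar z}$, and by nondegeneracy of the pairing
\[ \pi^*(\xi_w) = \sum_{z} \xi_z \/, \]
the sum being over $z \in W^Q$ with $zW_P = wW_P$. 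This is exactly the index set appearing in the statement.

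Linearity of the KGW invariant in the third argument then yields the first equality of the corollary. For the second, since $Q$ is $k$-free and $u, v \in W^Q$, Lemma \ref{lemma:minrep} ensures that $u_k, v_k \in W^Q$, and Theorem \ref{T:mainKGW}(a) applied on $G/Q$ with $[\cF] = \xi_z$ gives
\[ \gw{\cO^u, \cO^v, \xi_z}{\ek, G/Q} \;=\; \int_{G/Q} \cO^{u_k} \cdot \cO^{v_k} \cdot \xi_z \;=\; c_{u_k, v_k}^z \/, \]
the last equality being the definition of structure constants in $K_T(G/Q)$ via the pairing. Summing over $z$ completes the proof. The main (and only) real hurdle is the computation of $\pi^*\xi_w$; the rest is an organized application of the preceding results.
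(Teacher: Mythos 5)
Your proof is correct and follows essentially the same route as the paper's: apply Theorem \ref{T:mainKGW}(b) to move the computation to $G/Q$, then identify $\pi^*(\xi_w) = \sum_{z W_P = w W_P} \xi_z$ via the duality pairing and projection formula (using $\pi_* \cO^z = \cO^{\bar z}$ from cohomological triviality of the restricted projection), and finish with Theorem \ref{T:mainKGW}(a) on $G/Q$. The only difference is that you spell out a few details (such as $\pi^*\cO^u = \cO^u$) that the paper leaves implicit.
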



\begin{proof} By Thm. \ref{T:mainKGW} it follows that \[ \gw{\cO^u, \cO^v, \xi_w}{\ek, G/P} =  \gw{\cO^u, \cO^v, \pi^* (\xi_w)}{\ek, G/Q} \/. \]
To finish the proof it suffices to compute $\pi^*(\xi_w)= \sum_{z \in W^{Q}} g_{w,z} \xi_z \/. $ Then \[ g_{w,z} = \int_{G/Q} \pi^*(\xi_w) \cdot \cO^z = \int_{G/P} \xi_w \cdot \pi_*(\cO^z) = \delta_{w, z W_P} \/, \] where $\delta$ is the Kronecker delta symbol. This shows that \begin{equation}\label{E:pbxi} \pi^*(\xi_w) = \sum_{z \in W^{Q}; z W_P = w W_P} \xi_z \/. \end{equation}
and finishes the proof. \end{proof}

\subsection{Peterson comparison formula}

We prove next part (c) of Thm. \ref{T:mainKGW}. The first equality follows immediately from part (b), so we focus on the equality of first and third terms. For cohomological Gromov-Witten invariants this was conjectured by Peterson, and proved by Woodward \cite{woodward:peterson} in the non-equivariant case, and by Lam and Shimozono \cite{lam.shimozono:qaffine} equivariantly.

\begin{thm}\label{thm:PCF} Let $(P, \alpha_k) \in \mathcal{P}$ and $u,v,w \in W^P$. Then the following K-theoretic analogue of Peterson comparison formula holds:
\[ \gw{\cO^u,\cO^v,\cO^w}{\ek, G/P} = \gw{\cO^u,\cO^v, \cO^{ww_{P_k}}}{\ek,G/B} \quad \/. \] \end{thm}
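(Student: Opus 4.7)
The first equality is immediate from Theorem \ref{T:mainKGW}(b) applied to the inclusion $B \subset P$: since $u, v, w \in W^P$, the preimage under $\pi:G/B \to G/P$ of each $Y(w')$ equals $Y(w')$ in $G/B$, so $\pi^* \cO^{w'} = \cO^{w'}$ and no reindexing is needed. The real content is the second equality, and the plan is to rewrite both sides as the same K-theoretic integral on $G/P_k$.

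For the left-hand side, Theorem \ref{T:mainKGW}(b) applied to $P_k \subset P$ (using that $(P_k,\alpha_k)\in\mathcal{P}$ and $u,v,w\in W^P\subset W^{P_k}$) gives $\gw{\cO^u,\cO^v,\cO^w}{\ek,G/P} = \gw{\cO^u,\cO^v,\cO^w}{\ek,G/P_k}$, and since $P_k$ is $k$-free, part (a) applied on $G/P_k$ rewrites this as $\int_{G/P_k} \cO^{u_k} \cdot \cO^{v_k} \cdot \cO^w$. For the right-hand side, since $B$ is $k$-free, part (a) applied on $G/B$ yields
\[
\gw{\cO^u,\cO^v,\cO^{ww_{P_k}}}{\ek,G/B} = \int_{G/B} \cO^{u_k}\cdot \cO^{v_k}\cdot \cO^{ww_{P_k}}.
\]
Lemma \ref{lemma:minrep} applied to the $k$-free parabolic $P_k$ (and $u,v \in W^{P_k}$) shows $u_k, v_k \in W^{P_k}$, so on $G/B$ the classes $\cO^{u_k}$ and $\cO^{v_k}$ are pullbacks under $\pi:G/B \to G/P_k$ of the corresponding Schubert classes on $G/P_k$. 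The projection formula then collapses the integral to
\[
\int_{G/P_k} \cO^{u_k}\cdot \cO^{v_k}\cdot \pi_*\cO^{ww_{P_k}}.
\]

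It remains to verify $\pi_* \cO^{ww_{P_k}} = \cO^w$ on $G/P_k$. Since $w\in W^{P_k}$, the element $ww_{P_k}$ is the maximal length representative of the coset $wW_{P_k}$, and a direct dimension count gives $\dim Y(ww_{P_k})_{G/B} = \dim G/B - \ell(w) - \ell(w_{P_k}) = \dim Y(w)_{G/P_k}$. The restriction $\pi: Y(ww_{P_k})_{G/B} \to Y(w)_{G/P_k}$ is surjective and generically one-to-one, and both Schubert varieties have rational singularities, so Theorem \ref{T:push} supplies the required identity. Matching this with the left-hand computation completes the proof. The main subtlety is the coordinated use of the three parabolics $B\subset P_k \subset P$: the passage from $G/P$ to $G/P_k$ (where part (a) applies) on the left, and from $G/B$ back down to $G/P_k$ (where $\cO^{u_k}\cdot\cO^{v_k}$ descends and where $\cO^{ww_{P_k}}$ pushes forward to $\cO^w$) on the right.
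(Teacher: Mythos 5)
Your proposal is correct and follows essentially the same route as the paper's proof: both sides are rewritten, via Theorem \ref{T:mainKGW}(a)--(b) and the projection formula, as $\int_{G/P_k}\cO^{u_k}\cdot\cO^{v_k}\cdot\cO^w$, with the key observation being that $(\pi_{P_k})_*\cO^{ww_{P_k}}=\cO^w=(\pi_{P_k})_*\cO^w$ in $K_T(G/P_k)$. The only cosmetic difference is that the paper computes the LHS by descending from $G/B$ to $G/P_k$ and then reversing, whereas you descend the LHS to $G/P_k$ directly via part (b) and the RHS via the projection formula; the birationality of $\pi_{P_k}:Y(ww_{P_k})\to Y(w)$ that you verify from scratch is also the standard Frobenius-splitting fact recalled in \S\ref{ss:schubert}.
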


\begin{proof} Denote by $\pi_P:G/B \to G/P$ and $\pi_{P_k}: G/B \to G/P_k$ the natural projections. Using that $\pi_P^*\cO^u = \cO^u$ and Thm. \ref{T:mainKGW} parts (a) and (b) we obtain \[ \gw{\cO^u,\cO^v,\cO^w}{\ek, G/P} = \int_{G/B} \cO^{{u}_k} \cdot  \cO^{{v}_k} \cdot \cO^{{w}}\quad \/. \]
By projection formula and because $u_k,v_k \in W^{P_k}$ (by Lemma \ref{lemma:minrep}, since $P_k$ is $k$-free) we have that \[ \begin{split} \int_{G/B} \cO^{{u}_k} \cdot  \cO^{{v}_k} \cdot \cO^{w} & = \int_{G/B} (\pi_{P_k})^*(\cO^{{u}_k} \cdot  \cO^{{v}_k} ) \cdot \cO^{w}  = \int_{G/{P_k}} \cO^{u_k} \cdot \cO^{v_k} \cdot (\pi_{P_k})_* \cO^{w} \\ & = \int_{G/P_k} \cO^{u_k} \cdot \cO^{v_k} \cdot (\pi_{P_k})_* \cO^{ww_{P_k}} \/, \end{split} \]  where the last equality follows from $(\pi_{P_k})_*\cO^w = \cO^{wW_{P_k}}$. Reversing the reasoning yields
\[  \int_{G/P_k} \cO^{u_k} \cdot \cO^{v_k} \cdot (\pi_{P_k})_* \cO^{ww_{P_k}} =  \int_{G/B} \cO^{{u}_k} \cdot  \cO^{{v}_k}  \cdot \cO^{ww_{P_k} }\]
and the last integer equals $\gw{\cO^u,\cO^v, \cO^{ww_{P_k}}}{\ek,G/B}$ again by Thm. \ref{T:mainKGW} (a).
\end{proof}

\begin{remark} Same line of proof can be used to show the following identities: \[ \begin{split} \gw{\cO^u,\cO^v,\cO_w}{\ek, G/P} =\int_{G/B} \cO^{{u}_k} \cdot  \cO^{{v}_k} \cdot \cO_{{w}w_P w_{P_k}} = \gw{\cO^u,\cO^v, \cO_{w w_P w_{P_k}}}{\ek, G/B} \/, \end{split} \] where $w_P$ is the longest element in $W_P$. In this identity and the one from Thm. \ref{thm:PCF} one can replace the KGW invariants with the cohomological ones and obtain identities for the (equivariant) Gromov-Witten invariants. This is because the required (in)equalities on codimensions of Schubert classes are satisfied for both invariants - see \cite{woodward:peterson} for details. Finally, Thm. \ref{T:mainKGW} part (b) implies also that $\gw{\cO^u,\cO^v,\cO_w}{\ek, G/P} = \gw{\cO^u,\cO^v,\cO_{ww_P}}{\ek, G/B}$.\end{remark}

\section{Applications to quantum K-theory}\label{s:QK} Let $X=G/P$ with $(P,\alpha_k) \in \mathcal{P}$. The formulas for the KGW invariants from Thm. \ref{T:mainKGW} allow us to compute the structure constants for the equivariant quantum K-theory of $X$ as combinations of structure constants of $K_T(G/P_k)$. In the case when $P$ is $k$-free, the projected (boundary) Gromov-Witten varieties
have rational singularities. We exploit this and certain equalities among them to prove that the structure constants of quantum K-theory either vanish or are alternating. To state precisely these results, we first recall the basic definitions of equivariant quantum K-theory, following \cite{buch.m:qk} (see also \cite{buch.chaput.ea:finiteness,lee:qk}).

Equivariant quantum K-theory of $X$, denoted by $\QK_T(X)$, is a $\Lambda[[q_i]]$-algebra, with a $\Lambda[[q_i]]$-basis $\cO^u$ where $u$ varies in $W^P$ and the parameters $q_i$ are indexed by the simple roots in $\Delta \setminus \Delta_P$. If $P$ is $k$-free then $\deg q_k= 2$; for general degrees we refer to \cite{fulton.woodward:on}. The multiplication in $\QK_T(X)$ is determined by the structure constants $N_{u,v}^{w,d} \in \Lambda$ in the identity \[ \cO^u \circ \cO^v = \sum_{w, d} N_{u,v}^{w,d}q^d \cO^w \/;\] here $d= \sum_{\alpha_i \in \Delta \setminus \Delta_P} d_i \alpha_i^\vee$ is a degree in $H_2(X)$ and $q^d = \prod q_i^{d_i}$. We are interested in the case when $d= \ek$. Then
by definition
\begin{equation}\label{E:defQK} N_{u,v}^{w,\ek} = \gw{\cO^u,\cO^v, (\cO^w)^\vee}{\ek} - \sum_{z} \gw{\cO^u,\cO^v, (\cO^{z})^\vee}{0} \cdot \gw{\cO^z, (\cO^w)^\vee}{\ek} \/. \end{equation} Here $\gw{[\cF],[\cG]}{\ek}$ denotes the $2$-point KGW invariant defined by \[ \gw{[\cF],[\cG]}{\ek} = \int_{\Mb_{0,2}(X, \ek)} \ev_1^*[\cF] \cdot \ev_2^*[\cG] \/. \] This is in fact equal to $\gw{[\cF],[\cG],\cO^{id}}{\ek}$, because the map $\Mb_{0,3}(X, \ek) \to \Mb_{0,2}(X,\ek)$ has rational fibres \cite{givental:onwdvv}. The structure constants for the non-equivariant quantum K-theory ring $\QK(X)$ are obtained by specializing $e^\lambda \to 1$ in the equivariant coefficient ring $\Lambda$; thus in this case $N_{u,v}^{w, \ek} \in \Z$. The main result of this section is the following:

\begin{thm}\label{T:QK} Let $X=G/P$ for $P$ a $k$-free parabolic group and $u,v,w \in W^P$.

(a) The structure constant $N_{u,v}^{w, \ek}$ in $\QK_T(X)$ equals:
\begin{equation}\label{E:KGW=LR} N_{u,v}^{w, \ek} = c_{u_k,v_k}^w - \delta_{w^k,ws_k} (c_{u,v}^{ws_k} + c_{u,v}^w) \/, \end{equation} where on the right hand side are structure constants in $K_T(X)$ and $\delta$ is the Kronecker delta symbol. Equivalently, $N_{u,v}^{w, \ek}$ is the coefficient of $\cO^w$ in the expansion \begin{equation}\label{E:KGW=divdiff} \partial_k(\cO^u) \cdot \partial_k(\cO^v) - \partial_k(\cO^u \cdot \cO^v) \/. \end{equation}

(b) If $u_k = u $ or $v_k = v$, the equivariant structure constant $N_{u,v}^{w, \ek} = 0$ for all $w \in W^P$.

(c) The non-equivariant structure constants $N_{u,v}^{w, \ek}$ are alternating, i.e. \begin{equation}\label{E:KGWpos} (-1)^{\ell(u) + \ell(v) - \ell(w) - \deg q_k} N_{u,v}^{w, \ek} \ge 0 \/. \end{equation}
\end{thm}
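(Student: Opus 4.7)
The plan is to derive parts (a) and (b) algebraically from Theorem~\ref{T:mainKGW}, and then obtain (c) by realizing the coefficients on the right of~(\ref{E:KGW=LR}) as Schubert coefficients of classes of projected Gromov-Witten varieties with rational singularities.

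For part (a), I would start from the defining expression
\[N_{u,v}^{w,\ek} = \gw{\cO^u, \cO^v, \xi_w}{\ek} - \sum_z c_{u,v}^z\,\gw{\cO^z, \xi_w}{\ek}\]
and evaluate each piece via Theorem~\ref{T:mainKGW}(a). The 3-point invariant is immediately $\int_X \cO^{u_k}\cO^{v_k}\xi_w = c_{u_k,v_k}^w$. For the 2-point invariants I would rewrite $\gw{\cO^z,\xi_w}{\ek} = \gw{\cO^z,\cO^{\id},\xi_w}{\ek}$ (the forgetful map has rational fibres, as noted after~(\ref{E:defQK})) and apply Theorem~\ref{T:mainKGW}(a); since $\id_k = \id$, this collapses to $\int_X \cO^{z_k}\xi_w = \delta_{z_k, w}$. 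A direct check identifies the $z \in W^P$ with $z_k = w$ as $z \in \{w, ws_k\}$, both requiring $\ell(ws_k) > \ell(w)$, i.e., $w^k = ws_k$; substituting yields~(\ref{E:KGW=LR}), and the divided-difference reformulation~(\ref{E:KGW=divdiff}) then follows by comparing coefficients of $\cO^w$ in $\partial_k\cO^u \cdot \partial_k\cO^v$ and in $\partial_k(\cO^u\cdot\cO^v) = \sum_z c_{u,v}^z\cO^{z_k}$. For part (b), if $u_k = u$ then $\cO^u = \partial_k\cO^u = \pi_k^*((\pi_k)_*\cO^u)$, i.e.\ $\cO^u$ is pulled back from $K_T(G/P(k))$. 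The projection formula gives $K_T(G/P(k))$-linearity of $\partial_k$, so $\partial_k(\cO^u\cdot\cO^v) = \cO^u\cdot\partial_k\cO^v = \partial_k\cO^u\cdot\partial_k\cO^v$, making~(\ref{E:KGW=divdiff}) vanish and forcing $N_{u,v}^{w,\ek}=0$; the case $v_k = v$ is symmetric.

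For part (c), by (b) I may assume $u_k \neq u$ and $v_k \neq v$, so $\ell(u_k) = \ell(u)-1$ and $\ell(v_k) = \ell(v)-1$. With $\deg q_k = 2$, the inequality to prove becomes $(-1)^{\ell(u)+\ell(v)-\ell(w)}\,N_{u,v}^{w,\ek} \ge 0$. If $w^k = w$ the $\delta$-term vanishes and we are reduced to Brion's positivity (Thm.~\ref{T:brion}) applied to $\cO^{u_k}\cdot\cO^{v_k}$ in $K(G/P)$. Assume now $w^k = ws_k$, so $N_{u,v}^{w,\ek} = c_{u_k,v_k}^w - (c_{u,v}^w + c_{u,v}^{ws_k})$. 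Working non-equivariantly and using $\cO^u = \cO_{u^*}$ (where $u^* \in W^P$ represents $w_0 u W_P$), I identify both terms as Schubert coefficients of classes of projected GW varieties: $\cO^{u_k}\cdot\cO^{v_k} = [\cO_{X((u_k)^*)\cap Y(v_k)}] = [\cO_{\Gamma_{\ek}((u_k)^*, v_k)}]$ of codimension $\ell(u)+\ell(v)-2$ (Theorem~\ref{T:main}), while
\[\partial_k(\cO^u\cdot\cO^v) = \partial_k[\cO_{X(u^*)\cap Y(v)}] = [\cO_{\Gamma_{0,\ek}(u^*,v)}]\]
by Lemma~\ref{lemma:div} together with the identification $\Gamma_{0,\ek}(u^*,v) = \pi_k^{-1}\pi_k(X(u^*)\cap Y(v))$ valid in the $k$-free setting (Corollary~\ref{cor:kfreelines}). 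The variety $\Gamma_{0,\ek}(u^*,v)$ has rational singularities by Theorem~\ref{T:maintechintro}(b) and codimension $\ell(u)+\ell(v)-1$ (one less than the underlying Richardson, due to $\P^1$-saturation). Brion's theorem then yields
\[(-1)^{\ell(u)+\ell(v)-\ell(w)}\,c_{u_k,v_k}^w \ge 0 \quad \text{and}\quad (-1)^{\ell(u)+\ell(v)-\ell(w)-1}\,(c_{u,v}^w + c_{u,v}^{ws_k}) \ge 0,\]
and the subtraction in $N_{u,v}^{w,\ek}$ produces the desired non-negativity.

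The main obstacle is the positivity of the sum $c_{u,v}^w + c_{u,v}^{ws_k}$: applying Brion separately to $c_{u,v}^w$ and $c_{u,v}^{ws_k}$ yields opposite signs that do not combine correctly, so the grouping as a single Schubert coefficient of $[\cO_{\Gamma_{0,\ek}(u^*,v)}]$ is essential. Closely tied to this is the codimension calculation: one must verify that $\Gamma_{0,\ek}(u^*,v)$ really has codimension $\ell(u)+\ell(v)-1$, equivalently that $X(u^*)\cap Y(v)$ is not already $\pi_k$-saturated; the hypotheses $u_k \neq u$ and $v_k \neq v$ (ensuring $Y(v)$ is not $\pi_k$-saturated, and that the corresponding saturation of $X(u^*)$ increases dimension) are exactly what make this work.
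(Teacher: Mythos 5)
Your proposal is correct, and parts (a) and (c) follow essentially the same route as the paper: for (a), apply Thm.~\ref{T:mainKGW} to the defining expression~(\ref{E:defQK}) and observe that the two-point invariant is $\delta_{z_k,w}$; for (c), reduce by (b) to the case $u_k\neq u$, $v_k\neq v$, then identify (in the non-equivariant setting) $c_{u_k,v_k}^w$ and $c_{u,v}^w + c_{u,v}^{ws_k}$ as Schubert coefficients of $[\cO_{\Gamma_{\ek}(\wt u,v)}]$ and $[\cO_{\Gamma_{0,\ek}(\wt u,v)}]$ respectively, invoke Brion's Theorem~\ref{T:brion} for each, and use Lemma~\ref{lemma:ample} to get the dimension drop of exactly one that makes the signs compatible. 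Your observation that the grouping $c_{u,v}^w + c_{u,v}^{ws_k}$ is the point where naive term-by-term sign analysis fails is exactly the issue the paper's geometric packaging via $\Gamma_{0,\ek}$ resolves.

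For part (b) your argument is genuinely different from the paper's. You work purely algebraically: $u_k=u$ forces $\cO^u=\partial_k\cO^u=\pi_k^*(\pi_k)_*\cO^u$, so $\cO^u$ is pulled back along the $\P^1$-bundle $\pi_k$, and the projection formula yields $\partial_k(\cO^u\cdot\cO^v)=\cO^u\cdot\partial_k\cO^v=\partial_k\cO^u\cdot\partial_k\cO^v$, killing~(\ref{E:KGW=divdiff}). The paper instead returns to the geometric expression~(\ref{E:def4}) and uses Lemma~\ref{lemma:G01eq} ($v_k=v$ implies $\Gamma_{0,\ek}(z,v)=\Gamma_{\ek}(z,v)$ for all $z$). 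Both are correct and roughly equal in length; your argument is arguably the cleanest proof of (b) in the strictly $k$-free case and makes the vanishing look like a formal consequence of $\partial_k$ being $\pi_k^*(\pi_k)_*$. The paper's geometric argument has the advantage that it runs in parallel with the proof of (c) and transfers to the non-$k$-free case (Theorem~\ref{T:vanishing}), where one no longer has a $\P^1$-bundle $\pi_k$ on $G/P$ itself and the argument must be carried out on $G/P_k$ via equation~(\ref{E:defgen}).
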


The statements (a) and (b) will be generalized to parabolic groups $P$ such that $(P, \alpha_k) \in \mathcal{P}$; see \S \ref{ss:generalP} below.
We prove next the part (a) of the Theorem, and dedicate two sections to parts (b) and (c), which have more involved proofs.

\begin{proof}[Proof of Thm. \ref{T:QK} (a)] We note first that \[ \begin{split} \gw{\cO^z, (\cO^w)^\vee}{\ek} =  \gw{\cO^{id},\cO^z, (\cO^w)^\vee}{\ek} = & \int_{G/P} \cO^{id} \cdot \cO^{z_k} \cdot (\cO^w)^\vee = \delta_{z_k, w} \/, \end{split} \] where the second equality follows from Thm. \ref{T:mainKGW}. Applying the same theorem again to the remaining KGW invariants from equation (\ref{E:defQK}) yields formula (\ref{E:KGW=LR}). The second part of (a) is an easy calculation, obtained by identifying the coefficient of $\cO^w$ in the expansion (\ref{E:KGW=divdiff}) with the right hand side of (\ref{E:KGW=LR}), using that $\cO^u \cdot \cO^v = \sum c_{u,v}^w \cO^w$ in $K_T(X)$ and the formulas from Lemma \ref{lemma:div}. \end{proof}



\subsection{Boundary Gromov-Witten varieties and their projections}\label{ss:bdryGW} In order to prove parts (b) and (c) of Thm. \ref{T:QK}, we need to study more geometric properties of the varieties involved in the definition of the structure constants in $\QK_T(X)$. Recall from \cite[Rmk. 5.3]{buch.m:qk} that an alternate way to compute $N_{u,v}^{w,\ek}$ is:

\begin{equation}\label{E:def2} N_{u,v}^{w, \ek} = \euler{\Mb_{0,3}(X, \ek)} (\ev_1^* \cO^u  \cdot \ev_2^* \cO^v \cdot \ev_3^* (\cO^w)^\vee )- \euler{\mathcal{D}} (\ev_1^* \cO^u  \cdot \ev_2^* \cO^v \cdot \ev_3^* (\cO^w)^\vee ) \/, \end{equation}
where $\mathcal{D}$ is the fibre product \begin{equation}\label{E:bdry} \mathcal{D}= \Mb_{0,\{1,2,\bullet\}}(X, 0) \times_{X} \Mb_{0, \{ \bullet, 3 \}}(X, \ek) \simeq \Mb_{0, \{ \bullet, 3 \}}(X, \ek) \/, \end{equation} and the map to $X$ is given by evaluating at the marking $\bullet$.  (The last isomorphism holds because $\Mb_{0,3}(X,0) \simeq X$.) Set \[ GW_{0, \ek}(z,v) := GW_{\ek}(z,v)_{|\mathcal{D}}; \quad  \Gamma_{0, \ek}(z,v):=\ev_3(GW_{0,\ek}(z,v)) \/. \]  The first variety is the restriction to $\mathcal{D}$ of $GW_{\ek}(z,v)$. We refer to these as the {\em boundary GW variety} respectively {\em projected boundary GW variety}. Recall the notation $R_z^v:= X(z) \cap Y(v)$. From the identification $\mathcal{D} \simeq \Mb_{0,\{ \bullet , 3 \}}(X, \ek)$ we obtain \[ GW_{0, \ek}(z,v) = \ev_\bullet^{-1} (R_z^v); \quad  \Gamma_{0, \ek}(z,v) = \ev_3 ( \ev_\bullet^{-1} (R_z^v)) \/. \] Geometrically, $\Gamma_{0, \ek}(z,v)$ is the locus of points $x \in X$ so that there exists a line $ \ell \ni x$ of degree $\ek$ which intersects $R_z^v$. Recall that if $P$ is $k$-free then $P(k)$ denotes the parabolic group satisfying $\Delta_{P(k)} = \Delta_P \cup \{ \alpha_k\}$; denote by $\pi_k: G/P \to G/P(k)$ the natural projection. There is a remarkable coincidence between the divided difference operator $\partial_k = \pi_k^* (\pi_k)_*$ and the operator $(\ev_3)_* \ev_\bullet^*$ in the equivariant K-theory of $G/P$, which we explain next.

\begin{prop}\label{prop:coincidence} Let $P$ be a $k$-free parabolic group and $v,z \in W^P$.

(a) There is a natural isomorphism $\mathcal{D} \simeq G/P \times_{G/P(k)} G/P$. Under this isomorphism, the evaluation maps $\ev_\bullet$ and $\ev_3$ correspond to projections $pr_1,pr_2$ in the fibre diagram

\[ \xymatrix{ \mathcal{D} \simeq G/P \times_{G/P(k)} G/P \ar[rrr]^{\ev_\bullet = pr_1} \ar[d]^{\ev_3 = pr_2 } &  & & G/P \ar[d]^{\pi_k} \\ G/P \ar[rrr]^{\pi_k}&& & G/P(k) }  \/. \]

(b) Assume that $R_z^v$ is non-empty. Then the Gromov-Witten subvariety $GW_{0,\ek}(z,v)$ is isomorphic to $pr_1^{-1} R_z^v$ and the diagram in (a) determines another fibre diagram

\[ \xymatrix{ GW_{0,\ek}(z,v) \simeq pr_1^{-1} R_z^v \ar[rrr]^{\ev_\bullet = pr_1} \ar[d]^{\ev_3 = pr_2 } &  & & R_z^v \ar[d]^{\pi_k} \\ \Gamma_{0,\ek}(z,v) \simeq  \pi_k^{-1}(\pi_k R_z^v) \ar[rrr]^{\pi_k}&& & \pi_k(R_z^v) }  \]
Furthermore, $\Gamma_{0,\ek}(z,v)$ has rational singularities and the map $\ev_3:GW_{0,\ek}(z,v) \to \Gamma_{0, \ek}(z,v)$ is cohomologically trivial.

(c) Under the identifications in (a), the divided difference operator $\partial_k = \pi_k^* (\pi_k)_*$ equals $(\ev_3)_* \ev_\bullet^*$ and it satisfies \[ \partial_k [\cO_{R_z^v}] = [\cO_{\partial_k(R_z^v)}] = [\cO_{\Gamma_{0, \ek}(z,v)}] \/. \]

\end{prop}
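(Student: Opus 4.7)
The plan is to build everything on Corollary \ref{cor:ident}, which identifies $\Mb_{0,1}(X,\ek)$ with $X = G/P$, together with Corollary \ref{cor:kfreelines}, which says that every point $x \in X$ lies on a unique line of degree $\ek$ and that line coincides with the fibre $\pi_k^{-1}(\pi_k(x))$. Using $\mathcal{D} \simeq \Mb_{0,\{\bullet,3\}}(X,\ek)$, I would first define the morphism $(\ev_\bullet, \ev_3): \Mb_{0,\{\bullet,3\}}(X,\ek) \to G/P \times G/P$ and observe that its image lands inside the fibre product $G/P \times_{G/P(k)} G/P$: a two-pointed stable map parametrizes a line $\ell$ with markings $x_\bullet, x_3 \in \ell$, so uniqueness of the line forces $\pi_k(x_\bullet) = \pi_k(x_3)$. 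Conversely, any pair $(x_\bullet, x_3)$ in the fibre product recovers the unique line $\ell = \pi_k^{-1}(\pi_k(x_\bullet))$ together with its two markings. This produces a $G$-equivariant bijection on closed points, and since both varieties are normal (the moduli space has rational singularities and $\pi_k$ is smooth), Zariski's Main Theorem upgrades it to an isomorphism of schemes. Under this identification $\ev_\bullet = pr_1$ and $\ev_3 = pr_2$, which gives part (a).

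For part (b), the equality $GW_{0,\ek}(z,v) = \ev_\bullet^{-1}(R_z^v) = pr_1^{-1}(R_z^v)$ is immediate from (a). Since the square in (a) is Cartesian and $\pi_k$ is a smooth $\P^1$-bundle, $pr_2(pr_1^{-1}(R_z^v))$ equals $\pi_k^{-1}(\pi_k(R_z^v)) = \partial_k(R_z^v)$. By Lemma \ref{lemma:projrich} the image $\pi_k(R_z^v)$ has rational singularities, and smoothness of $\pi_k$ implies that the same holds for $\Gamma_{0,\ek}(z,v) = \pi_k^{-1}(\pi_k(R_z^v))$. Cohomological triviality of $\ev_3: GW_{0,\ek}(z,v) \to \Gamma_{0,\ek}(z,v)$ follows by flat base change: it is the pullback of the cohomologically trivial morphism $\pi_k|_{R_z^v}: R_z^v \to \pi_k(R_z^v)$ (Lemma \ref{lemma:projrich}) along the flat morphism $\pi_k: \pi_k^{-1}(\pi_k(R_z^v)) \to \pi_k(R_z^v)$.

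Part (c) combines the previous two. The identification in (a) yields $(\ev_3)_* \ev_\bullet^* = pr_{2*} pr_1^* = \pi_k^* (\pi_k)_* = \partial_k$, where the middle equality is flat base change for the Cartesian square. Then Lemma \ref{lemma:div} gives $\partial_k [\cO_{R_z^v}] = [\cO_{\partial_k(R_z^v)}]$, and part (b) identifies $\partial_k(R_z^v)$ with $\Gamma_{0,\ek}(z,v)$.

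The main obstacle is the scheme-theoretic verification in part (a); the set-theoretic bijection is painless, but one must promote it to an isomorphism of schemes. The cleanest route mimics the proof of Corollary \ref{cor:ident}: exhibit the morphism, check it is bijective on closed points using uniqueness of the line through a point, and invoke normality on both sides together with Zariski's Main Theorem. An alternative is to note that $\Mb_{0,\{\bullet,3\}}(X,\ek)$ fibers over $\Mb_{0,1}(X,\ek) \simeq G/P$ by forgetting $pt_3$, that this fibration is canonically the universal line, and that by Corollary \ref{cor:kfreelines} the universal line is the pullback of $\pi_k$ to $G/P$.
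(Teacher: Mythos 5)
Your proposal is correct and follows essentially the same route as the paper's proof, with two small but worthwhile simplifications in part (b). For part (a) you argue exactly as the paper does: use $\pi_k \ev_\bullet = \pi_k \ev_3$ to produce a morphism $\mathcal{D} \to G/P \times_{G/P(k)} G/P$, check bijectivity on closed points via uniqueness of the line through a point, and conclude with normality and Zariski's Main Theorem (the paper gets surjectivity from a dimension count plus irreducibility rather than your direct ``any pair in the fibre product recovers a line'' argument, but these are equivalent). For part (c) the identity $(\ev_3)_*\ev_\bullet^* = \pi_k^*(\pi_k)_*$ via flat base change and then Lemma \ref{lemma:div} is exactly the paper's argument.

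In part (b) the paper constructs a separate morphism $\Psi'$ and runs a dimension/normality argument to show $GW_{0,\ek}(z,v) \simeq pr_1^{-1}(R_z^v)$; you instead take the scheme-theoretic identity $GW_{0,\ek}(z,v) = \ev_\bullet^{-1}(R_z^v)$ (which the paper has already established in \S\ref{ss:bdryGW}) and simply apply the isomorphism from (a). That is cleaner and correct. More substantively, for cohomological triviality of $\ev_3:GW_{0,\ek}(z,v)\to\Gamma_{0,\ek}(z,v)$ the paper invokes Theorem \ref{T:push} after identifying general fibres, while you use flat base change along the flat morphism $\pi_k:\pi_k^{-1}(\pi_k R_z^v)\to\pi_k(R_z^v)$ together with cohomological triviality of $\pi_k|_{R_z^v}$ from Lemma \ref{lemma:projrich}. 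Your argument is slicker (no appeal to rational singularities of the source or the general-fibre analysis) and is in fact the same style of base-change argument the paper itself uses in part (c), so it is entirely consistent with the paper's toolkit. Both arguments are valid; yours buys a small economy of means.
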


\begin{proof} We first prove (a). Let $x \in G/P$. We will use repeatedly the observation that the unique line $\ell_x \ni x$ of degree $\ek$ specified by Cor. \ref{cor:kfreelines} is given by $\ell_x = \pi_k^{-1} (\pi_k(x))$. This follows from Cor. \ref{cor:identification}. Then the evaluation  maps $\ev_\bullet, \ev_3: \mathcal{D} \to G/P$ have the property that $\pi_k \ev_\bullet = \pi_k \ev_3$. This determines a morphism \[ \Psi: \mathcal{D} \to G/P \times_{G/P(k)} G/P\]  sending a line $\ell$ with two markings $(x_\bullet, x_3)$, to $(x_\bullet, x_3)$. A line is uniquely determined by any of the two points it contains, thus this morphism is injective. $\Psi$ is also surjective, because $\dim \mathcal{D} = \dim G/P \times_{G/P(k)} G/P$, and the latter variety is irreducible. Since both varieties are normal, Zariski's Main Theorem \cite[p.~ 209]{mumford:redbook} implies that $\Psi$ is an isomorphism. The identification of the evaluation maps with projections $pr_1, pr_2$ is immediate from the definition of $\Psi$.

We now prove part (b). The restriction $\Psi'$ of $\Psi$ to $GW_{0, \ek}(z,v)$ determines maps to $R_z^v$ and $\Gamma_{0,\ek}(z,v) \subset \pi_k^{-1} (\pi_k R_z^v) (=\partial_k(R_z^v)$), and therefore a map \[ \Psi': GW_{0, \ek}(z,v) \to R_z^v \times_{\pi_k (R_z^v)} \pi_k^{-1} (\pi_k (R_z^v)) \simeq pr_1^{-1} (R_z^v) \/. \] The morphism $\Psi'$ is again injective. Since $R_z^v$ is normal, irreducible and $pr_1$ is a $\P^1$-bundle projection it follows that $pr_1^{-1} (R_z^v)$ is normal and irreducible as well. We know that $GW_{0,\ek}(z,v)$ has rational singularities \cite[Cor.~3.1]{buch.chaput.ea:finiteness} hence it is normal. To prove that $\Psi'$ is an isomorphism it suffices to show that $\dim GW_{0, \ek}(z,v) = \dim pr_1^{-1} (R_z^v)$. But a standard application of Kleiman's transversality theorem \cite{kleiman} shows that \[ \begin{split} \dim GW_{0,\ek}(z,v) = \dim \Mb_{0,\{ \bullet, 3 \}} (G/P, \ek) - \codim R_z^v =& \\  \dim G/P + 1 - (\dim G/P - \dim R_z^v) = \dim R_z^v +1 = \dim pr_1^{-1} (R_z^v) \/. \end{split}  \] Since the projection map $pr_2$ is now identified with the restriction of the evaluation map $\ev_3$ to $GW_{0,\ek}(z,v)$ it follows that $\pi_k^{-1} (\pi_k(R_z^v))$ is isomorphic to the image of $\ev_3$, which is $\Gamma_{0, \ek}(z,v)$. Lemma \ref{lemma:projrich} gives that $\partial_k(R_z^v)$ has rational singularities. We employ Thm. \ref{T:push} to prove that $\ev_3$ is cohomologically trivial. Because we work in a fibre square diagram, a general fibre of $\ev_3$ coincides with a general fibre $F$ of $\pi_k:R_z^v \to \pi_k(R_z^v)$. This morphism is cohomologically trivial (Lemma \ref{lemma:projrich}), in particular it has connected fibres. But the fibre of the unrestricted morphism $\pi_k: G/P \to G/P(k)$ is $\P^1$, and this implies that $F$ is either isomorphic to $\P^1$ or to a reduced point. The hypotheses of Thm. \ref{T:push} are then satisfied and $\ev_3$ must be also cohomologically trivial.

We now prove part (c). By \cite[III.9.3]{hartshorne} or \cite[5.3.15]{chriss.ginzburg} it follows that in the given fibre diagram $\partial_k = \pi_k^* (\pi_k)_* = (\ev_3)_* \ev_\bullet^*$ as operators in the equivariant K-theory. Then $\partial_k([\cO_{R_z^v}]) = \cO_{\partial_k(R_z^v)}$ from Lemma \ref{lemma:projrich}. \end{proof}

We prove in the next three lemmas the key facts needed in the proof of positivity and vanishing of the structure constants $N_{u,v}^{w,\ek}$. In what follows we will repeatedly use the description of $\Gamma_{0, \ek}(z,v)$ as the locus of points $ x \in G/P$ contained in a line $\ell$ of degree $\ek$ such that $\ell \cap R_z^v \neq \emptyset$. In all the lemmas we use the common hypothesis that $P$ is a $k$-free parabolic subgroup and $z,v \in W^P$.

\begin{lemma}\label{lemma:G01eq} If either $z^k=z$ or $v_k=v$ then $\Gamma_{0,\ek}(z,v) = \Gamma_{\ek}(z,v)$. \end{lemma}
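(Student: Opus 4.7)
The plan is to reduce both $\Gamma_{\ek}(z,v)$ and $\Gamma_{0,\ek}(z,v)$ to explicit $\pi_k$-saturations in $G/P$, and then observe that under the stated hypothesis one of the two Schubert varieties cutting out the Richardson $R_z^v$ is already $\pi_k$-saturated, which forces the equality.

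First I would recall the two descriptions already in hand: Theorem \ref{T:main}(a) gives $\Gamma_{\ek}(z,v) = X(z^k) \cap Y(v_k)$, and Proposition \ref{prop:coincidence}(c) gives $\Gamma_{0,\ek}(z,v) = \partial_k(R_z^v) = \pi_k^{-1}(\pi_k(R_z^v))$, where $\pi_k : G/P \to G/P(k)$ is the natural projection. Since the inclusion $\Gamma_{0,\ek}(z,v) \subseteq \Gamma_{\ek}(z,v)$ is automatic (the left side is contained in $\partial_k(X(z)) \cap \partial_k(Y(v)) = X(z^k) \cap Y(v_k)$), only the reverse inclusion requires the hypothesis.

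The geometric heart of the argument is the identification, via Corollary \ref{cor:kfreelines} (equivalently Corollary \ref{cor:identification}), of the unique degree-$\ek$ line $\ell_x$ through a point $x \in G/P$ with the $\pi_k$-fiber $\pi_k^{-1}(\pi_k(x))$. Combined with the formulas $\pi_k^{-1}(\pi_k(X(w))) = X(w^k)$ and $\pi_k^{-1}(\pi_k(Y(w))) = Y(w_k)$ (which are just Lemma \ref{lemma:div} read geometrically), this says $X(z)$ is a union of lines of degree $\ek$ precisely when $z^k=z$, and $Y(v)$ is such a union precisely when $v_k = v$.

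Now suppose $z^k = z$, so $X(z)$ is $\pi_k$-saturated. For any $x \in X(z^k)\cap Y(v_k) = X(z)\cap Y(v_k)$, the line $\ell_x = \pi_k^{-1}(\pi_k(x))$ is contained in $X(z)$; meanwhile $x \in Y(v_k) = \pi_k^{-1}(\pi_k(Y(v)))$ forces $\ell_x \cap Y(v) \neq \emptyset$. Combining these, $\ell_x \cap Y(v) \subseteq X(z) \cap Y(v) = R_z^v$, hence $\pi_k(x) \in \pi_k(R_z^v)$ and $x \in \Gamma_{0,\ek}(z,v)$. The case $v_k = v$ is completely symmetric, with the roles of $X(z)$ and $Y(v)$ interchanged: now $Y(v)$ is $\pi_k$-saturated, $\ell_x \subseteq Y(v)$, and $\ell_x$ meets $X(z)$ inside $R_z^v$.

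I do not expect a serious obstacle here: once the correspondence between the combinatorial condition $z^k = z$ (resp. $v_k = v$) and the geometric condition ``$X(z)$ (resp. $Y(v)$) is a union of $\pi_k$-fibers'' is spelled out, the argument is a one-line diagram chase. The only thing to double-check is that when $R_z^v$ is empty both sides are trivially empty, which can be handled by remarking that if $z^k = z$ and $X(z) \cap Y(v_k) \neq \emptyset$ then already $X(z) \cap Y(v) \neq \emptyset$ (and similarly in the other case), so the hypothesis of Theorem \ref{T:main}(a)/Proposition \ref{prop:coincidence}(b) is not needed beyond what the argument supplies on its own.
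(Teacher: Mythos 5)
Your argument is correct and is essentially the paper's proof: both take $x\in\Gamma_{\ek}(z,v)$, use Corollary~\ref{cor:kfreelines} to get the unique line $\ell_x$, invoke Proposition~\ref{prop:kfreelines} (which you phrase as $\pi_k$-saturation of $X(z)$, resp.\ $Y(v)$) to conclude $\ell_x\subset X(z)$ when $z^k=z$, and then observe $\ell_x$ meets $R_z^v$. The only difference is cosmetic (you route through the fiber-product description of $\partial_k$ from Proposition~\ref{prop:coincidence}), and your closing remark about the non-emptiness hypothesis is a small point the paper leaves implicit.
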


\begin{proof} One inclusion is clear. Take $x \in \Gamma_{\ek}(z,v)$ and $\ell_x \ni x$ the unique line from Cor. \ref{cor:kfreelines}. By definition of $\Gamma_{\ek}(z,v)$, the line $\ell_x$ intersects both $X(z)$ and $Y(v)$. Assume now that $z^k=z$. Then by Prop. \ref{prop:kfreelines} this line is entirely included in $X(z)$, therefore it must intersect $X(z) \cap Y(v)$. The case when $v_k =v$ is similar. \end{proof}

\begin{lemma}\label{lemma:ample} Assume that both $z^k \neq z$ and $v_k \neq v$. Then both evaluation maps $\ev_3:GW_{\ek}(z,v) \to \Gamma_{\ek}(z,v)$ and $\ev_3: GW_{0,\ek}(z,v) \to \Gamma_{0,\ek}(z,v)$ are birational and $\Gamma_{0, \ek}(z,v)$ is a divisor in $\Gamma_{\ek}(z,v)$. In particular, we have strict inclusions \[ R_z^v = X(z) \cap Y(v) \subset \Gamma_{0, \ek}(z,v) \subset X(zs_k) \cap Y(vs_k) = R_{zs_k}^{vs_k} \/. \] \end{lemma}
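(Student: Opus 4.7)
The plan is to establish the three assertions in sequence by combining Thm.~\ref{T:main}(a), the geometric input of Lemma~\ref{lemma:intersection} and Cor.~\ref{cor:kfreelines}, and the fibre diagram from Prop.~\ref{prop:coincidence}(b). The hypothesis $z^k\neq z$ and $v_k\neq v$ is equivalent to $z^k=zs_k$ and $v_k=vs_k$, so Thm.~\ref{T:main}(a) yields $\Gamma_{\ek}(z,v)=R_{zs_k}^{vs_k}$, a Richardson variety of dimension $\dim R_z^v+2$. The containments $R_z^v\subset \Gamma_{0,\ek}(z,v)\subset\Gamma_{\ek}(z,v)$ are immediate from definitions, so to finish I need the two birationality statements plus the fact that $\Gamma_{0,\ek}(z,v)$ has codimension one in $\Gamma_{\ek}(z,v)$; the strict inclusions will then follow from the dimension count.

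For the first birationality, I would pick a general point $x\in X(z^k)^o\cap Y(v_k)^o$ and consider the unique degree $\ek$ line $\ell_x$ through $x$ produced by Cor.~\ref{cor:kfreelines}. Because $\ell_x\cap X(z^k)^o\ni x$ and $z^k\neq z$, Lemma~\ref{lemma:intersection} rules out the ``$\ell_x\subset X(z)$'' alternative and forces $\ell_x\cap X(z)$ to be a single transverse point; the opposite-Borel analog of the same lemma gives that $\ell_x\cap Y(v)$ is a single transverse point. Thus $\ev_3^{-1}(x)$ reduces to a single stable map with marked points uniquely determined, and together with irreducibility of both source and target this shows that $\ev_3\colon GW_{\ek}(z,v)\to\Gamma_{\ek}(z,v)$ is birational.

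For the second birationality I would exploit Prop.~\ref{prop:coincidence}(b), which identifies $\ev_3\colon GW_{0,\ek}(z,v)\to\Gamma_{0,\ek}(z,v)$ with the base change of the restricted projection $\pi_k|_{R_z^v}\colon R_z^v\to\pi_k(R_z^v)$ along the $\P^1$-bundle $\pi_k\colon G/P\to G/P(k)$. By Lemma~\ref{lemma:projrich}, $\pi_k|_{R_z^v}$ is cohomologically trivial, so its fibres are connected, and birationality reduces to generic finiteness. Suppose for contradiction that $\pi_k|_{R_z^v}$ is not generically finite; then by irreducibility and a dimension count $R_z^v=\pi_k^{-1}(\pi_k(R_z^v))$, so $\ell_x\subset R_z^v\subset X(z)$ for every $x\in R_z^v$. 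Picking $x\in R_z^v\cap X(z)^o$ and translating by $B$, the $B$-equivariance $b\cdot\ell_x=\ell_{b\cdot x}$ together with $B$-stability of $X(z)$ imply $\ell_y\subset X(z)$ for every $y\in B\cdot x=X(z)^o$; taking closures, $\Gamma_{\ek}(X(z))\subset X(z)$, so by Prop.~\ref{prop:kfreelines} $X(z^k)=X(z)$ and $z^k=z$, contradicting the hypothesis. Hence $\pi_k|_{R_z^v}$ is birational, so $\dim\Gamma_{0,\ek}(z,v)=\dim R_z^v+1$, establishing simultaneously that $\Gamma_{0,\ek}$ is a divisor in $\Gamma_{\ek}$ and that both inclusions are strict. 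The main obstacle is precisely this contrapositive step: the $B$-orbit argument is needed to upgrade the statement ``lines lie in $X(z)$ over all of $R_z^v$'' to ``lines lie in $X(z)$ over all of $X(z)^o$'', which can then be closed off using Prop.~\ref{prop:kfreelines}.
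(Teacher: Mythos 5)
Your proof is correct, but it reaches both birationality statements by routes that differ from the paper's. For the birationality of $\ev_3\colon GW_{\ek}(z,v)\to\Gamma_{\ek}(z,v)$, the paper equates $\dim GW_{\ek}(z,v)$ with $\dim\Gamma_{\ek}(z,v)=\ell(z)-\ell(v)+2$ and then invokes the cohomological triviality of $\ev_3$ from Thm.~\ref{T:maincohtriv} together with Stein factorization: a generically finite, cohomologically trivial map is birational. You instead argue geometrically that the general fibre over $x\in X(z^k)^o\cap Y(v_k)^o$ is a single reduced stable map, using Cor.~\ref{cor:kfreelines} for the uniqueness of $\ell_x$ and Lemma~\ref{lemma:intersection} (and its $B^-$ analogue) to force $\ell_x\cap X(z)$ and $\ell_x\cap Y(v)$ to be single transverse points; in characteristic zero a single-point general fibre already gives degree~one. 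Your version is more elementary, avoiding the cohomological triviality input. For the birationality of $\pi_k|_{R_z^v}$, the paper observes directly that $z^k\neq z$ is precisely $z\in W^{P(k)}$, so $\pi_k\colon X(z)\to\pi_k(X(z))$ is birational on the Schubert cell, and Kleiman transversality supplies a point of $Y(v)$ in the locus where this map is an isomorphism. You instead argue by contradiction: if $\pi_k|_{R_z^v}$ had positive-dimensional general fibre, a dimension count and irreducibility would force $R_z^v=\pi_k^{-1}(\pi_k(R_z^v))$, and then $B$-equivariance of $x\mapsto\ell_x$ combined with $B$-stability of $X(z)$ would push all lines over $X(z)^o$ into $X(z)$, whence $\Gamma_{\ek}(X(z))=X(z^k)\subseteq X(z)$ by Prop.~\ref{prop:kfreelines}, contradicting $z^k\neq z$. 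Both arguments are sound; the paper's is shorter and avoids the contrapositive, while yours makes the role of the uniqueness of lines more explicit throughout. Once both birationalities are in hand, the dimension bookkeeping for the divisor claim and the strict inclusions is the same in both proofs.
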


\begin{proof} Thm. \ref{T:maincohtriv} shows that $\Gamma_{\ek}(z,v) = X(z^k) \cap Y(v_k)$. The hypothesis on $z,v$ implies that \[ \dim \Gamma_{\ek}(z,v) = \dim X(z^k) \cap Y(v_k) = \ell(z) - \ell(v) + 2  = \dim GW_{\ek}(z,v)\/;\]the last equality follows from a standard calculation based on Kleiman Transversality Theorem \cite{kleiman}. Since $\ev_3:GW_{\ek}(z,v) \to \Gamma_{\ek}(z,v)$ is cohomologically trivial by Thm. \ref{T:maincohtriv}, Stein factorization \cite[III.11.5]{hartshorne} shows that it must be birational.

We now turn to $\Gamma_{0,\ek}(z,v)$. The fibre square in the part (b) of Prop. \ref{prop:coincidence} implies that a general fibre of $\ev_3:GW_{0,\ek}(z,v) \to \Gamma_{0,\ek}(z,v)$ is isomorphic to a general fibre of $\pi_k:R_z^v \to \pi_k(R_z^v)$. It suffices to show that the latter map is birational. Indeed, the hypothesis on $z$ means that $z \in W^{P(k)}$, therefore $\pi_k:X(z) \to \pi_k(X(z)) = X(z)$ is birational; let $U \subset X(z) \subset G/P$ be the open set where this map is an isomorphism. Because $X(z) \cap Y(v) \neq \emptyset$, Kleiman Transversality Theorem implies that $Y(v) \cap U \neq \emptyset$; the claimed birationality follows.

Finally, the definition of $\Gamma_{0,\ek}(z,v)$ implies that we always have inclusions (not necessarily strict)
\[ R_z^v = X(z) \cap Y(v) \subset \Gamma_{0, \ek}(z,v) \subset \Gamma_{\ek}(z,v) = R_{zs_k}^{vs_k} \/. \]
Counting dimensions again we obtain that $\dim R_{zs_k}^{vs_k} - \dim R_z^v = 2$ and $\dim \Gamma_{\ek}(z,v) - \dim \Gamma_{0,\ek}(z,v) =1$, therefore the inclusions must be strict.
 \end{proof}

\begin{remark} The proof of the previous lemma shows more: $\ev_3:GW_{0,\ek}(z,v) \to \Gamma_{0,\ek}(z,v)$ is birational whenever $z^k \neq z$ {\em or} $v_k \neq v$. \end{remark}

\subsection{Vanishing and positivity of $N_{u,v}^{w, \ek}$} To prove the vanishing and positivity parts from Thm. \ref{T:QK} we need to further rewrite the definition of the structure constants from equation (\ref{E:def2}), using the additional information from the previous section. Note that a theorem of Brion \cite{brion:Kpos} gives the signs of the structure constants in equation (\ref{E:KGW=LR}); but these {\em do not} determine that of $N_{u,v}^{w, \ek}$. Our proof uses the geometric results from the previous section, and it shows why the terms of the wrong sign get canceled when we consider the full coefficient $N_{u,v}^{w, \ek}$. In this section $X=G/P$ where $P$ is a $k$-free parabolic subgroup. Expand: \[ \cO^u = \sum_z P_{u,z} \cO_z; \qquad P_{u,z} \in \Lambda \/. \]  We replace $\cO^u$ by $\sum_z P_{u,z} \cO_z$ in (\ref{E:def2}) to obtain
\begin{equation}\label{E:def3} \begin{split} N_{u,v}^{w, \ek} = \sum_{z} P_{u,z} \bigl(\euler{\Mb_{0,3}(X, \ek)} (\ev_1^* \cO_z  \cdot \ev_2^* \cO^v \cdot \ev_3^* (\cO^w)^\vee )- & \\\euler{\mathcal{D}} (\ev_1^* \cO_z  \cdot \ev_2^* \cO^v \cdot \ev_3^* (\cO^w)^\vee )\bigr) \/. \end{split} \end{equation} We noticed in \S \ref{ss:KGW} that $\ev_1^*\cO_z \cdot \ev_2^* \cO^v = [\cO_{GW_{\ek}(z,v)}]$ in $K_T(\Mb_{0,3}(X, \ek))$. Same proof shows that in $K_T(\mathcal{D})$ we have $\ev_1^*\cO_z \cdot \ev_2^*\cO^v =  [\cO_{GW_{0,\ek}(z,v)}]$. Recall that both morphisms $\ev_3:GW_{\ek}(z,v) \to \Gamma_{\ek}(z,v)$ and $\ev_3:GW_{0,\ek}(z,v) \to \Gamma_{0,\ek}(z,v)$ are cohomologically trivial (Thm. \ref{T:maincohtriv} respectively Prop. \ref{prop:coincidence}). Then by projection formula we have

\[ \begin{split}  \euler{\Mb_{0,3}(X, \ek)} (\ev_1^* \cO_z  \cdot \ev_2^* \cO^v \cdot \ev_3^* (\cO^w)^\vee )- \euler{\mathcal{D}} (\ev_1^* \cO_z  \cdot \ev_2^* \cO^v \cdot \ev_3^* (\cO^w)^\vee ) &\\ = \int_{X} ( [\cO_{\Gamma_{\ek}(z,v)}] -  [\cO_{\Gamma_{0,\ek}(z,v)}]) \cdot (\cO^w)^\vee \end{split} \] therefore \begin{equation}\label{E:def4} N_{u,v}^{w,\ek} = \sum_z P_{u,z} \int_{X} ( [\cO_{\Gamma_{\ek}(z,v)}] -  [\cO_{\Gamma_{0,\ek}(z,v)}]) \cdot (\cO^w)^\vee \/. \end{equation}

This immediately implies the claimed vanishing:

\begin{proof}[Proof of Thm. \ref{T:QK} part (b)] Because $\QK_T(X)$ is a commutative ring, we can assume that ${v}_k = {v}$. The parabolic subgroup $P$ is $k$-free, thus $\Gamma_{\ek}(z,v) = \Gamma_{0,\ek}(z,v)$ by Lemma \ref{lemma:G01eq}. Then the vanishing follows from the identity (\ref{E:def4}).\end{proof}

For the proof of Thm. \ref{T:QK}(c) we will  use the following positivity theorem of Brion (\cite[Thm.1]{brion:Kpos}).

\begin{thm}(Brion)\label{T:brion} Let $Y$ be an irreducible closed subvariety of $X=G/P$ which has rational singularities. Consider the expansion \[ [\cO_Y] = \sum a_w \cO_w \/. \] Then $(-1)^{\dim Y - \ell(w)} a_w \ge 0$. \end{thm}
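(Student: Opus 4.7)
The plan is to convert the K-theoretic statement into an Euler-characteristic computation on a transverse intersection, and then extract the sign via Serre duality and a Kodaira-type vanishing.

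First, I would identify the basis dual to $\{\cO_w\}_{w\in W^P}$ under the non-degenerate pairing $\langle\alpha,\beta\rangle = \int_X \alpha\cdot\beta$. By the symmetric analogue of the argument in \S\ref{ss:schubert} that produces $\xi_w = [\cO_{X(w)}(-\partial X(w))]$ dual to $\cO^w$, the dual basis of $\{\cO_w\}$ is $\{\xi^w\}$ where $\xi^w := [\cO_{Y(w)}(-\partial Y(w))]$. Consequently
\[
a_w = \int_X [\cO_Y]\cdot\xi^w.
\]

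Second, I would replace $Y(w)$ by a generic $G$-translate $gY(w)$; this does not change its class in (non-equivariant) K-theory. Kleiman's transversality theorem guarantees that for generic $g$ the intersection $Z := Y\cap gY(w)$ has the expected dimension $\dim Y-\ell(w)$, or is empty (in which case $a_w=0$ and the inequality is trivial). The crucial geometric input is a theorem of Brion, proved via Frobenius splitting of $G/P$, that the scheme-theoretic intersection $Z$ inherits rational singularities, and similarly for the boundary $\partial Z := Z\cap g(\partial Y(w))$. Invoking the K-theoretic refinement of Kleiman transversality (Sierra's theorem, as used in \S\ref{ss:KGW}) we obtain the class-level identity $[\cO_Y]\cdot[\cO_{gY(w)}(-\partial gY(w))] = [\cO_Z(-\partial Z)]$, and therefore
\[
a_w = \chi(Z,\cO_Z(-\partial Z)).
\]

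Third, since $Z$ is a projective Cohen-Macaulay variety and $\partial Z$ is a Cartier divisor on it, Serre duality gives
\[
\chi(Z,\cO_Z(-\partial Z)) = (-1)^{\dim Z}\chi(Z,\omega_Z(\partial Z)).
\]
Because $\dim Z = \dim Y-\ell(w)$, this would immediately yield $(-1)^{\dim Y-\ell(w)}a_w = \chi(Z,\omega_Z(\partial Z))$. To complete the proof I would then show $H^i(Z,\omega_Z(\partial Z))=0$ for all $i>0$, so that the Euler characteristic equals $\dim H^0 \geq 0$. The argument here is a Grauert-Riemenschneider-type statement for the pair $(Z,\partial Z)$: take an equivariant resolution $\pi:\widetilde{Z}\to Z$ with $\pi^{-1}(\partial Z)$ a simple normal crossings divisor, apply Kawamata-Viehweg vanishing on $\widetilde Z$ for the divisor $\pi^*(\text{ample})-\pi^{-1}(\partial Z)$, and push down using the rational-singularities hypothesis on $Z$ (so that $R\pi_*\omega_{\widetilde Z} = \omega_Z$ by Grauert-Riemenschneider).

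The main obstacle is precisely the combination of the two deep inputs above: Brion's theorem that generic $G$-translates of rational-singularity subvarieties of $G/P$ meet in a rational-singularity variety (which rests on Frobenius splitting of Richardson/Schubert varieties and the compatibility of $\partial Y(w)$ with that splitting), and the Grauert-Riemenschneider/Kawamata-Viehweg vanishing for the boundary-twisted dualizing sheaf on the singular $Z$. A secondary technical hurdle is the K-theoretic, rather than cohomological, form of Kleiman transversality that is needed to pass from the product of classes to the single class $[\cO_Z(-\partial Z)]$, since one must rule out higher $\mathrm{Tor}$ contributions in the product.
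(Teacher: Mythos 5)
The paper does not prove this theorem; it simply cites \cite[Thm.~1]{brion:Kpos}, so there is no internal proof to compare against. Your proposal reconstructs the main lines of Brion's original argument, and the strategy is correct: identify the basis dual to $\{\cO_w\}$ as $\xi^w = [\cO_{Y(w)}(-\partial Y(w))]$, write $a_w = \chi\big(X, [\cO_Y]\cdot\xi^w\big)$, translate $Y(w)$ generically so that $Z = Y\cap gY(w)$ has the expected dimension and rational singularities (Brion's Frobenius-splitting lemma), pass to $a_w = \chi(Z,\cO_Z(-\partial Z))$ via Sierra-type K-theoretic Kleiman, then extract the sign by Serre duality and a vanishing theorem.

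One technical point you should not gloss over: you assert ``$\partial Z$ is a Cartier divisor on $Z$'' in order to invoke Serre duality in the line-bundle form $\chi(\cO_Z(-\partial Z)) = (-1)^{\dim Z}\chi(\omega_Z(\partial Z))$. In general the boundary $\partial Y(w)$ is not a Cartier divisor on $Y(w)$ (Schubert varieties are singular, and the boundary is a priori only a Weil divisor), and correspondingly $\partial Z$ need not be Cartier on $Z$. Brion circumvents this by invoking the Ramanan--Ramanathan / Mehta--Srinivas identification $\cO_{Y(w)}(-\partial Y(w)) \cong \omega_{Y(w)} \otimes L|_{Y(w)}$ for an explicit line bundle $L$ on $X$; combined with adjunction along the transverse intersection this gives $\cO_Z(-\partial Z) \cong \omega_Z \otimes L|_Z$, so that $\chi(Z,\cO_Z(-\partial Z)) = (-1)^{\dim Z}\chi(Z, L^{-1}|_Z)$ by Serre duality for the Cohen--Macaulay scheme $Z$ applied to the genuine line bundle $L|_Z$, and the remaining task is the vanishing $H^{i}(Z,L^{-1}|_Z)=0$ for $i<\dim Z$. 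Your Grauert--Riemenschneider/Kawamata--Viehweg plan then applies to this reformulation. With this adjustment the argument is essentially Brion's.
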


\begin{proof}[Proof of Thm. \ref{T:QK} part (c)] Given the vanishing in Thm. \ref{T:QK}(b), we can assume that $u_k \neq u$ and $v_k \neq v$; equivalently, $ \ell(u s_k) = \ell(u) -1 $ and $\ell(vs_k) = \ell(v) -1 $.  There are two cases. The first is   when $w^k =w$. From Thm. \ref{T:QK} part (a) $N_{u,v}^{w,\ek} = c_{us_k, vs_k}^w$, and this coefficient satisfies the required positivity by Thm. \ref{T:brion}.

We turn to the second case when $w^k \neq w$. In the non-equivariant case $\cO^u = \cO_{\wt{u}}$, where $\wt{u} \in W^P$ is the minimal length representative in the coset $w_0 u W_P \in W/W_P$. Then equation (\ref{E:def4}) becomes:

\[ N_{u,v}^{w,\ek} = \int_X [\cO_{\Gamma_{\ek}(\wt{u},v)}] \cdot (\cO^w)^\vee - \int_X [\cO_{\Gamma_{0,\ek}(\wt{u},v)}] \cdot (\cO^w)^\vee \/. \]

Then the required positivity follows again from Brion's Theorem. To see that, consider the expansions \[ [\cO_{\Gamma_{\ek}(\wt{u},v)}] = \sum_z d_{\wt{u},v}^z \cO^z; \quad [\cO_{\Gamma_{0,\ek}(\wt{u},v)}] = \sum_z f_{\wt{u},v}^z \cO^z \/. \]
Then \begin{equation}\label{E:def5} N_{u,v}^{w,\ek} = d_{\wt{u},v}^w - f_{\wt{u},v}^w \/, \end{equation} and by Brion's Theorem \[ (-1)^{\dim \Gamma_{\ek}(\wt{u},v) - \dim Y(w)} d_{\wt{u},v}^w \ge 0 \textrm{ and }  (-1)^{\dim \Gamma_{0,\ek}(\wt{u},v) - \dim Y(w)} f_{\wt{u},v}^w \ge 0 \/. \]
But the hypothesis on $u$ implies that $\wt{u}^k \neq \wt{u}$, therefore by Lemma \ref{lemma:ample} \[ \dim \Gamma_{0,\ek}(\wt{u},v) = \dim GW_{0,\ek}(\wt{u},v) =  \dim R_{\wt{u}}^v + 1 = \ell(\wt{u}) -\ell(v) + 1 \/; \] (the second equality follows from the fibre square in Prop. \ref{prop:coincidence}). Same Lemma implies that $\dim \Gamma_{\ek}(\wt{u},v) = \dim \Gamma_{0,\ek}(\wt{u},v) + 1$. Finally, \[ \begin{split} \dim  \Gamma_{\ek}(\wt{u},v) - \dim Y(w) = \ell(\wt{u}) - \ell(v) + 2 - \dim Y(w) =& \\ (\dim X - \ell(u)) - \ell(v) + 2 - (\dim X - \ell(w)) = \ell(w) + 2 - (\ell(u) + \ell(v)) \/. \end{split} \] This shows that both terms $d_{\wt{u},v}^w$ and $f_{\wt{u},v}^w$ in (\ref{E:def5}) have the correct sign, and finishes the proof. \end{proof}

\begin{remark}\label{remark:KTpos} We conjecture that the {\em equivariant} coefficients $N_{u,v}^{w, \ek}$ satisfy the positivity property: \[ (-1)^{\ell(u) + \ell(v) - \ell(w) - \deg q_k} N_{u,v}^{w, \ek} \in \Z_{\ge 0} [e^{-\alpha_i} -1]_{\alpha_i \in \Delta \setminus \Delta_P} \/. \]  This holds if $P$ is $k$-free and $w^k \neq ws_k$, because then $N_{u,v}^{w, \ek} = c_{u_k, v_k}^w$ as {\em equivariant} structure constants.  The latter satisfies the claimed positivity thanks to a conjecture of Griffeth-Ram \cite{griffeth.ram:affine} (see also \cite{graham.kumar:positivity}) proved by Anderson-Griffeth-Miller \cite{anderson.griffeth.miller:positivity}. It generalizes the Peterson-Graham positivity \cite{graham:pos} in equivariant cohomology, and the one in equivariant quantum cohomology proved by the second author in \cite{mihalcea:positivity}.

If $P$ is $k$-free, this conjecture follows if the positivity theorem \cite[Thm. 4.1]{anderson.griffeth.miller:positivity} would generalize from $\partial$ an ample divisor to a big and nef divisor. We will consider this generalization and its relation to the QK structure constants elsewhere.\end{remark}

\subsection{Structure constants in $\QK_T(G/P)$ for more general parabolic groups}\label{ss:generalP} Let $X=G/P$. In this section we state statements analogous to (a) and (b) from Thm.~\ref{T:QK} in the case when $P$ is not $k$-free, but $(P, \alpha_k) \in \mathcal{P}$. The proofs will be similar to those before, therefore we only sketch them and point out the differences. The main tools we used in the case when $P$ is $k$-free - Prop. \ref{prop:coincidence} and lemmas after that - are no longer available. Instead, we will rely on a weaker version of Thm. \ref{T:main} to transfer the computations from $G/P$ to $G/P_k$. Since $P_k$ is $k$-free, most - but not all - calculations from previous section will extend.

To fix notation denote by $\pi:G/P_k \to G/P$ the projection and by $\Pi:\Mb_{0,3}(G/P_k, \ek) \to \Mb_{0,3}(G/P, \ek)$ the map induced by $\pi$. If $u \in W^P$, recall that $\widehat{u} \in W^{P_k}$ is defined by $\pi^{-1} X(u) = X(\widehat{u})$ and that $(\cO^u)^\vee$ is denoted by $\xi_u$. We keep the notation from \S \ref{ss:bdryGW}, but we distinguish by $\mathcal{D}_P$ respectively $\mathcal{D}_{P_k}$ the boundary loci defined in (\ref{E:bdry}) for maps to $G/P$ and $G/P_k$. We have a commutative diagram \begin{equation}\label{diagram:general} \xymatrix{ \mathcal{D}_{P_k} \subset \Mb_{0,3}(G/P_k,\ek) \ar[r]^{\Pi} \ar[d]^{\EV} & \mathcal{D}_P \subset \Mb_{0,3}(G/P, \ek) \ar[d]^{\EV}\\ (G/P_k)^3 \ar[r]^{\pi \times \pi \times \pi} & (G/P)^3 } \end{equation} where as usual $\EV = \ev_1 \times \ev_2 \times \ev_3$. The top arrow means  that $\Pi$ induces a restriction map $\Pi: \mathcal{D}_{P_k} \to \mathcal{D}_P$. We know from Thm. \ref{T:surj} that $\Pi:\Mb_{0,3}(G/P_k, \ek) \to \Mb_{0,3}(G/P, \ek)$ is surjective and cohomologically trivial, and minor modifications in that proof show that its restriction $\Pi: \mathcal{D}_{P_k} \to \mathcal{D}_P$ has the same properties. Using this, projection formula, and equation (\ref{E:def2}) we obtain (for $u,v,w \in W^P$): \begin{equation}\label{E:pb} \begin{split} N_{u,v}^{w, \ek}  = \euler{\Mb_{0,3}(G/P, \ek)}(\EV^*(\cO^u \times \cO^v \times \xi_w)) - \euler{\mathcal{D}_P}(\EV^*(\cO^u \times \cO^v \times \xi_w))  = & \\ \euler{\Mb_{0,3}(G/P_k, \ek)}(\EV^*(\cO^u \times \cO^v \times \pi^*(\xi_w))) - \euler{\mathcal{D}_{P_k}}(\EV^*(\cO^u \times \cO^v \times \pi^*(\xi_w))) \/. \end{split} \end{equation}
A standard computation based on the class of the diagonal $\Delta \subset G/P_k \times G/P_k$ (see e.g. \cite[Lemma. 5.1]{buch.chaput.ea:finiteness}) shows that \[ \begin{split} \euler{\mathcal{D}_{P_k}}(\EV^*(\cO^u & \times \cO^v \times \pi^*(\xi_w))) = \sum_{z \in W^{P_k}} \gw{\cO^u,\cO^v, \xi_z}{0,G/P_k} \gw{\cO^z, \pi^*(\xi_w)}{\ek,G/P_k}  \\ =  \sum_z c_{u_k,v_k}^z \/, \end{split} \] where the last sum is over $z \in W^{P_k}$ such that  $z_k W_P=w W_P$ as cosets in $W/W_P$. This follows because $\gw{\cO^u,\cO^v, \xi_z}{0,G/P_k} = c_{u_k,v_k}^z$ and \[ \gw{\cO^z, \pi^*(\xi_w)}{\ek,G/P_k} = \int_{G/P_k} \cO^{z_k} \cdot \pi^*(\xi_w) \/; \] then we use formula (\ref{E:pbxi}) for $\pi^*(\xi_w)$ in Cor. \ref{cor:xi} above. Together with formula (\ref{E:pb}) this implies:

\begin{thm}\label{T:formulagenP} Let $(P, \alpha_k) \in \mathcal{P}$, and $u,v,w \in W^P$. The structure constants $N_{u,v}^{w, \ek}$ in $\QK_T(G/P)$ are given by the following formula:
\[ N_{u,v}^{w, \ek} = \sum_{a} c_{u_k, v_k}^a - \sum_{b} c_{u,v}^b \] where the first sum is over $a \in W^{P_k}$ such that $aW_P = w W_P$, the second over those $b \in W^{P_k}$ such that $b_k W_P = w W_P$, and $c_{u_k,v_k}^a,c_{u,v}^b$ are structure constants in $K_T(G/P_k)$.\end{thm}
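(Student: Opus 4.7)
The plan is to reduce the computation to $G/P_k$, where $P_k$ is $k$-free, and then invoke Theorem \ref{T:mainKGW}(a) together with the structure of the boundary $\mathcal{D}$. I would begin from the alternate expression (\ref{E:def2}), writing $N_{u,v}^{w,\ek}$ as the difference of sheaf Euler characteristics over $\Mb_{0,3}(G/P,\ek)$ and the boundary component $\mathcal{D}_P$, with integrands pulled back via $\ev_1,\ev_2,\ev_3$. This is the natural starting point since we need to handle the non-$k$-free case through the sum-of-terms structure, not through the single-KGW formulas.

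The first main step is to transfer this difference to $G/P_k$ via the projection $\pi:G/P_k\to G/P$ and the induced morphism $\Pi$ on moduli spaces (diagram (\ref{diagram:general})). For the full term this is provided by Theorem \ref{T:mainKGW}(b). For the boundary term I need that the restriction $\Pi:\mathcal{D}_{P_k}\to\mathcal{D}_P$ is surjective and cohomologically trivial; this follows by applying the argument of Theorem \ref{T:surj} verbatim to the moduli component $\Mb_{0,\{\bullet,3\}}(G/P,\ek)$ that is isomorphic to $\mathcal{D}$ via (\ref{E:bdry}) (the generic fibre is again $P_k/Q$ with $Q=B$, hence rational). Combined with the projection formula, this produces equation (\ref{E:pb}): both terms can now be evaluated on $G/P_k$ with integrand $\cO^u\times\cO^v\times\pi^*(\xi_w)$.

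The second step is to evaluate each term. Since $P_k$ is $k$-free, Theorem \ref{T:mainKGW}(a) applies directly to the full term, yielding
\[ \gw{\cO^u,\cO^v,\pi^*(\xi_w)}{\ek,G/P_k}=\int_{G/P_k}\cO^{u_k}\cdot\cO^{v_k}\cdot\pi^*(\xi_w). \]
Expanding $\pi^*(\xi_w)=\sum_{a\in W^{P_k},\,aW_P=wW_P}\xi_a$ by formula (\ref{E:pbxi}) inside Corollary \ref{cor:xi} gives exactly the first sum $\sum_a c_{u_k,v_k}^a$. For the boundary term, I would use the standard splitting via the class of the diagonal in $G/P_k\times G/P_k$ to rewrite it as
\[ \sum_{z\in W^{P_k}}\gw{\cO^u,\cO^v,\xi_z}{0,G/P_k}\cdot\gw{\cO^z,\pi^*(\xi_w)}{\ek,G/P_k}. \]
The degree zero factor is just the classical structure constant $c_{u,v}^z$ in $K_T(G/P_k)$, while by Theorem \ref{T:mainKGW}(a) and (\ref{E:pbxi}) the degree $\ek$ two-point factor equals $\int_{G/P_k}\cO^{z_k}\cdot\pi^*(\xi_w)=\delta_{z_kW_P,\,wW_P}$. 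Collecting the nonzero terms yields $\sum_b c_{u,v}^b$ over $b\in W^{P_k}$ with $b_kW_P=wW_P$, and subtracting produces the stated formula.

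The main obstacle is bookkeeping rather than a deep geometric input: one must carefully track the interplay between $W^P$ and $W^{P_k}$, the dual basis $\xi$ under pullback along $\pi$, and the operator $z\mapsto z_k$ applied to representatives in $W^{P_k}$ (which is well-defined by Lemma \ref{lemma:minrep}). The only genuinely geometric point to verify is the cohomological triviality of $\Pi$ restricted to $\mathcal{D}_{P_k}$, but this reduces to the argument already given for $\Pi$ on the full moduli space in Theorem \ref{T:surj}, via the isomorphism $\mathcal{D}\simeq\Mb_{0,\{\bullet,3\}}(X,\ek)$ in (\ref{E:bdry}).
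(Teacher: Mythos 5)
Your proof is correct and follows essentially the same route as the paper: start from (\ref{E:def2}), transfer both terms to $G/P_k$ via the cohomologically trivial maps $\Pi$ and $\Pi|_{\mathcal{D}_{P_k}}$ and the projection formula to obtain (\ref{E:pb}), then evaluate the full term by Theorem \ref{T:mainKGW}(a) together with (\ref{E:pbxi}) and the boundary term by a diagonal splitting. One minor remark: your identification of the degree-zero factor as $c_{u,v}^z$ is the correct one (the paper's intermediate display contains a typo writing $c_{u_k,v_k}^z$ there, even though the final theorem statement correctly has $c_{u,v}^b$), and in the application of the argument of Theorem \ref{T:surj} to the boundary the relevant parabolic is $Q=P_k$ (so the generic fibre is a point) rather than $Q=B$, but the rationality of the general fibre holds in either case.
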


We now turn to the analogue of vanishing result from (b), Thm. \ref{T:QK}.

\begin{thm}\label{T:vanishing} Let $(P,\alpha_k) \in \mathcal{P}$ and $u, v \in W^P$.  Assume that either $u_k = u $ or $v_k = v$. Then $N_{u,v}^{w, \ek} = 0$ in $\QK_T(G/P)$. \end{thm}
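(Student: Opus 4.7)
The plan is to reduce the statement on $G/P$ to the already-established vanishing on $G/P_k$ (where $P_k$ is $k$-free) via the formula in Theorem \ref{T:formulagenP}. Without loss of generality assume $v_k = v$; the other case is symmetric. Since $P_k \subset P$, we have $W^P \subset W^{P_k}$, so $u, v \in W^{P_k}$ as well, and the condition $v_k = v$ is the same whether we view $v$ in $W$ or in $W^{P_k}$. Therefore Theorem \ref{T:QK}(b), applied to the $k$-free parabolic $P_k$, gives $N_{u,v}^{a, \ek, G/P_k} = 0$ for every $a \in W^{P_k}$.

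The heart of the argument is to show that the structure constants on $G/P$ decompose as sums of those on $G/P_k$:
\[ N_{u,v}^{w, \ek, G/P} = \sum_{a \in W^{P_k},\, aW_P = wW_P} N_{u,v}^{a, \ek, G/P_k}. \]
Given this identity, each summand vanishes by the preceding paragraph and we are done.

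To establish the identity, I would combine Theorem \ref{T:formulagenP} with Theorem \ref{T:QK}(a) on $G/P_k$. The left-hand side equals $\sum_a c_{u_k,v_k}^a - \sum_b c_{u,v}^b$ (first sum over $a \in W^{P_k}$ with $aW_P = wW_P$, second over $b \in W^{P_k}$ with $b_kW_P = wW_P$), while the right-hand side equals
\[ \sum_{a} c_{u_k,v_k}^a - \sum_{a \in A} \bigl(c_{u,v}^{a} + c_{u,v}^{as_k}\bigr), \]
where $A = \{a \in W^{P_k} : aW_P = wW_P,\ a^k = as_k\}$. The first sums match termwise, so the task reduces to the purely combinatorial identity $\{b \in W^{P_k} : b_k W_P = wW_P\} = A \sqcup \{as_k : a \in A\}$. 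One checks this by splitting the left set according to whether $b_k = b$ or $b_k = bs_k$: in the first case $b \in A$ directly; in the second case $a := bs_k$ lies in $A$ (using that $s_k$ fixes $\Delta_{P_k}$ pointwise, so $W^{P_k}$ is stable under right multiplication by $s_k$) and $b = as_k$.

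The main obstacle I anticipate is verifying the combinatorial bijection cleanly, in particular confirming that $a \in W^{P_k}$ implies $as_k \in W^{P_k}$ (which follows from $k$-freeness of $P_k$ exactly as in Lemma \ref{lemma:minrep}) and that the two pieces of the disjoint union are actually disjoint (which holds because $\alpha_k \notin \Delta_P$ forces $aW_P \neq as_k W_P$ only insofar as they are distinguished by the parity condition $a^k = as_k$). Once these bookkeeping points are in place, the decomposition identity follows and the vanishing is immediate.
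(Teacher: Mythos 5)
Your proof is correct, but it takes a genuinely different route than the paper's. The paper proves the vanishing geometrically: starting from identity (\ref{E:pb}), it derives (paralleling the derivation of (\ref{E:def4}) in the $k$-free case) the formula $N_{u,v}^{w,\ek} = \sum_{z \in W^{P_k}} P_{u,z} \int_{G/P_k}\bigl([\cO_{\Gamma_{\ek}(z,v)}]-[\cO_{\Gamma_{0,\ek}(z,v)}]\bigr)\cdot\pi^*(\xi_w)$, and then invokes Lemma~\ref{lemma:G01eq} to conclude that when $v_k=v$ each term vanishes because $\Gamma_{\ek}(z,v) = \Gamma_{0,\ek}(z,v)$ on $G/P_k$. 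You instead combine Theorem~\ref{T:formulagenP} with Theorem~\ref{T:QK}(a) on $G/P_k$ to produce the decomposition
\[
N_{u,v}^{w,\ek,G/P} \;=\; \sum_{a \in W^{P_k},\, aW_P = wW_P} N_{u,v}^{a,\ek,G/P_k},
\]
and then feed in Theorem~\ref{T:QK}(b). Your combinatorial bijection $\{b \in W^{P_k}:b_kW_P=wW_P\} = A \sqcup \{as_k : a \in A\}$ with $A=\{a\in W^{P_k}: aW_P=wW_P,\, a^k=as_k\}$ is correct and is the crux of the argument. One small correction: the disjointness of $A$ and $\{as_k : a\in A\}$ does not really come from $\alpha_k\notin\Delta_P$; it comes from the length dichotomy --- elements of $A$ satisfy $\ell(as_k)>\ell(a)$ (equivalently $a^k=as_k$), whereas for $b=as_k$ with $a\in A$ one has $\ell(bs_k)<\ell(b)$ (equivalently $b^k=b$), and these are mutually exclusive. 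Your approach is slightly less self-contained (it funnels Theorems~\ref{T:formulagenP} and \ref{T:QK} through it rather than working with the projected GW varieties directly), but it yields the explicit Peterson-type comparison formula for $N_{u,v}^{w,\ek}$ between $G/P$ and $G/P_k$ displayed above, which the paper does not state and which is of independent interest.
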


\begin{proof} By commutativity of $\QK_T(X)$ we can assume that $v_k = v$. Starting from identity (\ref{E:pb}), the same reasoning used to obtain formula (\ref{E:def4}) shows that \begin{equation}\label{E:defgen} N_{u,v}^{w,\ek} = \sum_{z \in W^{P_k}} P_{u,z} \int_{G/P_k} ( [\cO_{\Gamma_{\ek}(z,v)}] -  [\cO_{\Gamma_{0,\ek}(z,v)}]) \cdot \pi^*(\xi_w)\/, \end{equation} where
 $\cO^u = \sum_{z \in W^{P_k}} P_{u,z} \cO_z \in K_T(G/P_k)$. But then $\Gamma_{\ek}(z,v) = \Gamma_{0,\ek}(z,v)$ by Lemma \ref{lemma:G01eq} and we are done. \end{proof}

\section{Appendix} In this appendix we present multiplication tables for $QK_T(G/B)$ up to degrees $\ek$, in the case when $G=\SL_3(\C)$ and $G= Sp_4(\C)$. In type $A$, the multiplications are up to the symmetry $s_1 \longleftrightarrow s_2$ of the Weyl group; in type $C$, $\alpha_1$ denotes the short root.


{\bf Table of $\QK_T(\SL_3(\C)/B)$ up to degrees $\ek$}

%
\resizebox{.99\hsize}{!}{\parbox{7cm}{\begin{align*}
   \cO^{s_1}\circ \cO^{s_1}&\equiv (1-e^{-\alpha_1})\cO^{s_1}+e^{-\alpha_1}\cO^{s_2s_1}+e^{-\alpha_1}q_1-e^{-\alpha_1}q_1\cO^{s_2}\\
   \cO^{s_1}\circ \cO^{s_2}&\equiv\cO^{s_1s_2}+\cO^{s_2s_1}-\cO^{s_1s_2s_1}\\
   \cO^{s_1}\circ \cO^{s_1s_2}&\equiv(1-e^{-\alpha_1})\cO^{s_1s_2}+e^{-\alpha_1}\cO^{s_1s_2s_1}\\
     \cO^{s_1}\circ \cO^{s_2s_1}&\equiv(1-e^{-\alpha_1-\alpha_2})\cO^{s_2s_1}+e^{-\alpha_1-\alpha_2}q_1\cO^{s_2}\\
     \cO^{s_1}\circ \cO^{s_1s_2s_1}&\equiv(1-e^{-\alpha_1-\alpha_2})\cO^{s_1s_2s_1}+e^{-\alpha_1-\alpha_2}q_1\cO^{s_1s_2}\\
      \cO^{s_1s_2}\circ \cO^{s_1s_2}&\equiv(1-e^{-\alpha_1})(1-e^{-\alpha_1-\alpha_2})\cO^{s_1s_2}+e^{-\alpha_1}q_2\cO^{s_2s_1}\\
         &\quad +(1-e^{-\alpha_1})e^{-\alpha_1-\alpha_2}q_2\cO^{s_1}\\
   \cO^{s_1s_2}\circ \cO^{s_2s_1}&\equiv(1-e^{-\alpha_1-\alpha_2})\cO^{s_1s_2s_1}\\
    \cO^{s_1s_2}\circ  \cO^{s_1s_2s_1}&\equiv(1-e^{-\alpha_1})(1-e^{-\alpha_1-\alpha_2})\cO^{s_1s_2s_1}+(1-e^{-\alpha_1-\alpha_2})e^{-\alpha_1}q_2\cO^{s_2s_1}\\
 \cO^{s_1s_2s_1}\circ  \cO^{s_1s_2s_1}&\equiv(1-e^{-\alpha_1})(1-e^{-\alpha_2})(1-e^{-\alpha_1-\alpha_2})\cO^{s_1s_2s_1}\\
                       &\quad +(1-e^{-\alpha_1})(1-e^{-\alpha_1-\alpha_2})e^{-\alpha_2}q_1\cO^{s_1s_2} \\
                         &\quad      + (1-e^{-\alpha_2})(1-e^{-\alpha_1-\alpha_2})q_2\cO^{s_2s_1}\\
  \end{align*}}}

 {\bf Table of $\QK_T(\Sp_4(\C)/B)$ up to degrees $\ek$}

\resizebox{.99\hsize}{!}{\parbox{8cm}{\begin{align*}
   \cO^{s_1}\circ \cO^{s_1}&\equiv (1-e^{-\alpha_1})\cO^{s_1}+e^{-\alpha_1}\cO^{s_2s_1}+e^{-\alpha_1}q_1-e^{-\alpha_1}q_1\cO^{s_2}\\
   \cO^{s_1}\circ \cO^{s_2}&\equiv\cO^{s_1s_2}+\cO^{s_2s_1}-\cO^{s_1s_2s_1}-\cO^{s_2s_1s_2}+\cO^{s_1s_2s_1s_2}\\
   \cO^{s_1}\circ \cO^{s_1s_2}&\equiv(1-e^{-\alpha_1})\cO^{s_1s_2}+e^{-\alpha_1}\cO^{s_1s_2s_1}+e^{-\alpha_1}\cO^{s_2s_1s_2}-e^{-\alpha_1}\cO^{s_1s_2s_1s_2}\\
      \cO^{s_1}\circ \cO^{s_2s_1}&\equiv(1-e^{-\alpha_1-\alpha_2})\cO^{s_2s_1}+e^{-\alpha_1-\alpha_2}\cO^{s_1s_2s_1}\\
                                  &\quad +e^{-\alpha_1-\alpha_2}q_1\cO^{s_2}-e^{-\alpha_1-\alpha_2}q_1\cO^{s_1s_2}\\
     \cO^{s_1}\circ \cO^{s_1s_2s_1}&\equiv(1-e^{-2\alpha_1-\alpha_2})\cO^{s_1s_2s_1}+e^{-2\alpha_1-\alpha_2}q_1\cO^{s_1s_2}\\
     \cO^{s_1}\circ \cO^{s_2s_1s_2}&\equiv(1-e^{-\alpha_1-\alpha_2})\cO^{s_2s_1s_2}+e^{-\alpha_1-\alpha_2}\cO^{s_1s_2s_1s_2}\\
     \cO^{s_1}\circ \cO^{s_1s_2s_1s_2}&\equiv(1-e^{-2\alpha_1-\alpha_2})\cO^{s_1s_2s_1s_2}+e^{-2\alpha_1-\alpha_2}q_1\cO^{s_2s_1s_2}\\
   \cO^{s_2}\circ \cO^{s_2}&\equiv (1-e^{-\alpha_2})\cO^{s_2}+(1+e^{-\alpha_1})e^{-\alpha_2}\cO^{s_1s_2}-e^{-\alpha_1-\alpha_2}\cO^{s_2s_1s_2}\\
                       &\quad +e^{-\alpha_2}q_2-(1+e^{-\alpha_1})e^{-\alpha_2}q_2\cO^{s_1}+e^{-\alpha_1-\alpha_2}q_2\cO^{s_2s_1}\\
     \cO^{s_2}\circ \cO^{s_1s_2}&\equiv(1-e^{-2\alpha_1-\alpha_2})\cO^{s_1s_2}+e^{-2\alpha_1-\alpha_2}\cO^{s_2s_1s_2}\\
         &\quad +e^{-2\alpha_1-\alpha_2}q_2\cO^{s_1}-e^{-2\alpha_1-\alpha_2}q_2\cO^{s_2s_1}\\
   \cO^{s_2}\circ \cO^{s_2s_1}&\equiv(1-e^{-\alpha_2})\cO^{s_2s_1}+(1+e^{-\alpha_1})e^{-\alpha_2}\cO^{s_1s_2s_1}\\
                       &\quad +e^{-\alpha_2}\cO^{s_2s_1s_2}-(1+e^{-\alpha_1})e^{-\alpha_2}\cO^{s_1s_2s_1s_2}\\
     \cO^{s_2}\circ \cO^{s_1s_2s_1}&\equiv(1-e^{-2\alpha_1-\alpha_2})\cO^{s_1s_2s_1}+e^{-2\alpha_1-\alpha_2}\cO^{s_1s_2s_1s_2}\\
     \cO^{s_2}\circ \cO^{s_2s_1s_2}&\equiv(1-e^{-2\alpha_1-2\alpha_2})\cO^{s_2s_1s_2}+e^{-2\alpha_1-2\alpha_2}q_2\cO^{s_2s_1}\\
     \cO^{s_2}\circ \cO^{s_1s_2s_1s_2}&\equiv(1-e^{-2\alpha_1-2\alpha_2})\cO^{s_1s_2s_1s_2}+e^{-2\alpha_1-2\alpha_2}q_2\cO^{s_1s_2s_1}\\
           \cO^{s_1s_2}\circ \cO^{s_1s_2}&\equiv(1-e^{-\alpha_1})(1-e^{-2\alpha_1-\alpha_2})\cO^{s_1s_2}+(1-e^{-2\alpha_1-\alpha_2})e^{-\alpha_1}\cO^{s_2s_1s_2}\\ &\quad +e^{-3\alpha_1-\alpha_2}q_2\cO^{s_2s_1}+(1-e^{-\alpha_1})e^{-2\alpha_1-\alpha_2}q_2\cO^{s_1}\\
      \cO^{s_1s_2}\circ \cO^{s_2s_1}&\equiv(1-e^{-2\alpha_1-\alpha_2})\cO^{s_1s_2s_1}+(1-e^{-\alpha_1-\alpha_2})\cO^{s_2s_1s_2}\\
                         &\quad -(1-e^{-\alpha_1-\alpha_2}-e^{-2\alpha_1-\alpha_2})\cO^{s_1s_2s_1s_2}\\
   \cO^{s_1s_2}\circ  \cO^{s_1s_2s_1}&\equiv(1-e^{-\alpha_1})(1-e^{-2\alpha_1-\alpha_2})\cO^{s_1s_2s_1}
                          +(1-e^{-2\alpha_1-\alpha_2})e^{-\alpha_1}\cO^{s_1s_2s_1s_2}\\
 \cO^{s_1s_2}\circ  \cO^{s_2s_1s_2}&\equiv(1-e^{-\alpha_1-\alpha_2})(1-e^{-2\alpha_1-\alpha_2})\cO^{s_2s_1s_2}\\
                                   &\quad  +e^{-\alpha_1-\alpha_2}q_2\cO^{s_1s_2s_1}+(1-e^{-\alpha_1-\alpha_2})e^{-2\alpha_1-\alpha_2}q_2\cO^{s_2s_1}\\
 \cO^{s_1s_2}\circ  \cO^{s_1s_2s_1s_2}&\equiv(1-e^{-\alpha_1-\alpha_2})(1-e^{-2\alpha_1-\alpha_2})\cO^{s_1s_2s_1s_2}\\
                                      &\quad +(1-e^{-2\alpha_1-\alpha_2})e^{-\alpha_1-\alpha_2}q_2\cO^{s_1s_2s_1}\\
  \cO^{s_2s_1}\circ \cO^{s_2s_1}&\equiv(1-e^{-\alpha_2})(1-e^{-\alpha_1-\alpha_2})\cO^{s_2s_1}+(1-e^{-\alpha_1-\alpha_2})(1+e^{-\alpha_1})e^{-\alpha_2}\cO^{s_1s_2s_1}\\
             &\quad -e^{-\alpha_1-\alpha_2}q_1\cO^{s_2s_1s_2}+(1+e^{-\alpha_1})e^{-\alpha_1-2\alpha_2}q_1\cO^{s_1s_2}+(1-e^{-\alpha_2})e^{-\alpha_1-\alpha_2}q_1\cO^{s_2}\\
    \cO^{s_2s_1}\circ  \cO^{s_1s_2s_1}&\equiv(1-e^{-\alpha_1-\alpha_2})(1-e^{-2\alpha_1-\alpha_2})\cO^{s_1s_2s_1}\\
               &\quad +e^{-2\alpha_1-\alpha_2}q_1\cO^{s_2s_1s_2}+(1-e^{-2\alpha_1-\alpha_2})e^{-\alpha_1-\alpha_2}q_1\cO^{s_1s_2}\\
 \cO^{s_2s_1}\circ  \cO^{s_2s_1s_2}&\equiv(1-e^{-\alpha_2})(1-e^{-\alpha_1-\alpha_2})\cO^{s_2s_1s_2}\\
                                 &\quad +(1-e^{-\alpha_1-\alpha_2})(1+e^{-\alpha_1})e^{-\alpha_2}\cO^{s_1s_2s_1s_2}\\
 \cO^{s_2s_1}\circ  \cO^{s_1s_2s_1s_2}&\equiv(1-e^{-\alpha_1-\alpha_2})(1-e^{-2\alpha_1-\alpha_2})\cO^{s_1s_2s_1s_2}\\
                                 &\quad +(1-e^{-\alpha_1-\alpha_2})(1+e^{-\alpha_1})e^{-\alpha_1-\alpha_2}q_1\cO^{s_2s_1s_2}\\
 \cO^{s_1s_2s_1}\circ  \cO^{s_1s_2s_1}&\equiv(1-e^{-\alpha_1})(1-e^{-\alpha_1-\alpha_2})(1-e^{-2\alpha_1-\alpha_2})\cO^{s_1s_2s_1}\\
                       &\quad +(1-e^{-2\alpha_1-\alpha_2})e^{-\alpha_1}q_1\cO^{s_2s_1s_2}\\
                       &\quad        + (1-e^{-\alpha_1})(1-e^{-2\alpha_1-\alpha_2})e^{-\alpha_1-\alpha_2}q_1\cO^{s_1s_2}\\
 \cO^{s_1s_2s_1}\circ  \cO^{s_2s_1s_2}&\equiv(1-e^{-\alpha_1-\alpha_2})(1-e^{-2\alpha_1-\alpha_2})\cO^{s_1s_2s_1s_2}\\
  \end{align*}}}
     \newpage
     \resizebox{.99\hsize}{!}{\parbox{8cm}{\begin{align*}
 \cO^{s_1s_2s_1}\circ  \cO^{s_1s_2s_1s_2}&\equiv(1-e^{-\alpha_1})(1-e^{-\alpha_1-\alpha_2})(1-e^{-2\alpha_1-\alpha_2})\cO^{s_1s_2s_1s_2}\\
                       &\quad +(1-e^{-\alpha_1-\alpha_2})(1-e^{-2\alpha_1-\alpha_2})e^{-\alpha_1}q_1\cO^{s_2s_1s_2}\\
                        \cO^{s_2s_1s_2}\circ  \cO^{s_2s_1s_2}&\equiv(1-e^{-\alpha_2})(1-e^{-\alpha_1-\alpha_2})(1-e^{-2\alpha_1-\alpha_2})\cO^{s_2s_1s_2}\\
                       &\quad +(1-e^{-\alpha_1-\alpha_2})(1+e^{-\alpha_1})e^{-\alpha_2}q_2\cO^{s_1s_2s_1}\\
                          &\quad     + (1-e^{-\alpha_2})(1-e^{-\alpha_1-\alpha_2})e^{-2\alpha_1-\alpha_2}q_2\cO^{s_2s_1}\\
 \cO^{s_2s_1s_2}\circ  \cO^{s_1s_2s_1s_2}&\equiv(1-e^{-\alpha_2})(1-e^{-\alpha_1-\alpha_2})(1-e^{-2\alpha_1-\alpha_2})\cO^{s_1s_2s_1s_2}\\
                       &\quad +(1-e^{-\alpha_1-\alpha_2})(1-e^{-2\alpha_1-\alpha_2})e^{-\alpha_2}q_2\cO^{s_1s_2s_1}\\
 \cO^{s_1s_2s_1s_2}\circ  \cO^{s_1s_2s_1s_2}&\equiv(1-e^{-\alpha_1})(1-e^{-\alpha_2})(1-e^{-\alpha_1-\alpha_2})(1-e^{-2\alpha_1-\alpha_2})\cO^{s_1s_2s_1s_2}\\
                       &\quad +(1-e^{-\alpha_2})(1-e^{-\alpha_1-\alpha_2})(1-e^{-2\alpha_1-\alpha_2})e^{-\alpha_1}q_1\cO^{s_2s_1s_2}\\
                       &\quad        + (1-e^{-\alpha_1})(1-e^{-\alpha_1-\alpha_2})(1-e^{-2\alpha_1-\alpha_2})e^{-\alpha_2}q_2\cO^{s_1s_2s_1}\\
  \end{align*}}}

\end{document}